\newtheorem{theorem}{Theorem}
\newtheorem{corollary}[theorem]{Corollary}
\newtheorem{lemma}[theorem]{Lemma}
\newtheorem{proposition}[theorem]{Proposition}
\newtheorem{definition}[theorem]{Definition}
\newtheorem{remark}[theorem]{Remark}
\newcommand{\edproof}{ $\hfill {\Box}$}
\newcommand{\INT}{\int_0^\infty}
\newcommand{\F}{\mathcal{{F}}}
\newcommand{\R}{\mathbb{{R}}}
\newcommand{\N}{\mathbb{{N}}}
\newcommand{\C}{\mathbb{{C}}}
\newcommand{\LL}{\mathcal{{L}}}
\newcommand{\HH}{\mathcal{{H}}}
\author[De Le\'on-Contreras]{Marta De Le\'on-Contreras}
\address{Departamento de Matem\'aticas, Facultad de Ciencias, Universidad Aut\'onoma de Madrid, 28049 Madrid, Spain.}
\email{marta.leon@uam.es}
\author[Torrea]{Jos\'e L. Torrea}
\address{Departamento de Matem\'aticas, Facultad de Ciencias, Universidad Aut\'onoma de Madrid, 28049 Madrid, Spain.}
\email{joseluis.torrea@uam.es}
\thanks{Research partially supported by grant  MTM2015-66157-C2-1-P (MINECO/FEDER)}
\keywords{Semigroups. Fractional laplacian. Lipschitz H\"older Zygmund spaces. H\"older estimates}
\subjclass[2010]{Primary 42C05; Secondary 35K08, 42B35}
\begin{document}

\title{Fractional powers of the  parabolic Hermite operator. Regularity properties}

\begin{abstract}
Let $\LL= \partial_t- \Delta_x+|x|^2$. Consider its Poisson semigroup $e^{-y\sqrt{\LL}}$. For $\alpha >0$ define the 
Parabolic Hermite-Zygmund spaces 
		$$
	\Lambda^\alpha_{\mathcal{L}}=\left\{f: \:f\in L^\infty(\mathbb{R}^{n+1})\:\; {\rm and} \:\;  \left\|\partial_y^k	e^{-y\sqrt{\LL}} f \right\|_{L^\infty(\mathbb{R}^{n+1})}\leq C_k y^{-k+\alpha},\;\: {\rm with }\, k=[\alpha]+1, y>0. \right\},
	$$
	with the obvious norm. It is shown that these spaces have a pointwise description of H\"older type. 
	 
	The fractional powers $\LL^{\pm \beta}$ are well defined in these spaces and the following regularity properties are proved:
	\begin{eqnarray*} 
	\alpha, \beta >0, \quad \|\mathcal{L}^{-\beta} f\|_{	\Lambda^{\alpha+2\beta}_{\mathcal{L}}}\le C \|f\|_{	\Lambda^\alpha_{\mathcal{L}}}.
	\end{eqnarray*}
	\begin{eqnarray*}
	0< 2\beta < \alpha, \quad  \|\mathcal{L}^\beta f\|_{\Lambda_{\mathcal{L}}^{\alpha-2\beta}}\le C \|f\|_{\Lambda^\alpha_{\mathcal{L}}}.
	\end{eqnarray*}
	Parallel results are obtained   for the Hermite operator $- \Delta +|x|^2.$ The proofs use in a fundamental way the semigroup definition of the operators  $\LL^{\pm \beta}$ and $(-\Delta+|x|^2)^{\pm \beta}$.  The non-convolution structure of the operators produce an extra difficulty of the arguments.  
	
\end{abstract}

\maketitle

\section{Introduction}

Treatises  dealing with  Lipschitz and H\"older spaces have been the object in  quite a lot  papers and books along the last hundred years. In general they can be considered as  
the classes between the space of continuos functions and the space of $\mathcal{C}^1$ (differentiable with  continuous derivatives)  functions, this is the case of  $C^\alpha, \, 0< \alpha <1$. Also they can be considered as the spaces which fill the interval between the classes $\mathcal{C}^k$ and $\mathcal{C}^{k+1}$, this is the case of the spaces $C^{k,\alpha}, k \in \mathbb{N}, 0<\alpha<1$. The importance of the smoothness of the functions in the classical theory of Fourier series drove, in a natural way, to analyze the validity of different theorems for the case of Lipschiz functions.  We refer to the classical book of Zygmund, \cite{Zygmund}, to see the role played by these classes in classical Fourier Analysis.  In Harmonic Analysis the classes became important as spaces in which some operators are well defined and satisfy some boundedness properties,  we refer  to the book of E. Stein, \cite{Stein}, in order to have a detailed description from a Harmonic Analysis point of view.  In differential equations, Lipschitz continuity is the key of the Picard-Lindel\"of theorem for  the existence and uniqueness of the solution to an initial value problem.  Results about regularity properties with respect to H\"older classes, ${C}^{\alpha}(\mathbb{R}^n)$ and ${C}^{k, \alpha}(\mathbb{R}^n)$ ,  are one of the important matters  in the theory of partial differential equations. For elliptic operators they can be used to  obtain classical solutions of second order elliptic equations of the form $Lu=f$ (see for instance \cite[Chapter~6]{Gilbarg-Trudinger}).  Moreover,   in  certain measure spaces without notion of  derivative, the Lipschitz classes are a good substitute of the space $\mathcal{C}^\infty$ in order to define distributions, and some abstract Harmonic Analysis can be performed.  This is of special importance in spaces of homogeneous type, see  \cite{MaciasSegovia}.  Finally they are object of study in their own by researchers in Functional Analysis, see \cite{Kalton}.

The outbreak produced by the paper of L. Caffarelli and L.Silvestre  about the fractional laplacian, \cite{CaffaSilve}, has given way to a flowering of papers analyzing the  classical properties of the elliptic operators but in the case of these  ``new'' fractional operators.  In particular regularity properties for the operator $(-\Delta)^\sigma$ were  proved in  \cite{CaffarelliSilvestre-Reg}.  For elliptic operators in divergence form see \cite{CaffarelliPablo}. In the case of the Harmonic oscillator $\mathcal{H}=-\Delta+|x|^2,$ the classes ${C}^{\alpha}_\mathcal{H}(\mathbb{R}^n)$ were defined in \cite{ST2}, see Definition \ref{DefPablo},  Schauder and H\"older estimates were proved in this case.

As a shorthand it can be said that, for $0<\alpha<1$,  a $C^\alpha$ function satisfies an inequality of the type $|f(x)-f(x-y)| \le C |y|^\alpha$. For $\alpha >1$, not an integer, the $[\alpha]$-order derivatives of the function $f$ satisfy the same kind of inequality. Special mention should deserve the case $\alpha=1$, with is described  as the  {\it Zygmund
class}  $|f(x+y)+f(x-y)-2f(x)| < c |y|$, see \cite[Chapter II]{Zygmund} . See also the interesting article \cite{Krantz1} and the references there in.
These pointwise definitions imply that to prove   regularity results of an operator among these spaces  we need its pointwise expression.  In some (in fact many) cases this can be a rather involved formula, see for example the expressions of $(-\Delta)^\alpha$ and $\HH^{-\sigma} f(x)$ in \cite{ST2}.

In  the 60's of last century  the language of the semigroups was used in order to characterize H\"older spaces, see \cite{Taibleson}. This is specially successful in the case of the Poisson semigroup. The classical reference is  E. M. Stein,  see \cite[Chapter~5]{Stein}. Being a little bit imprecise it can be said  that a function $f$ belongs to a class ${\Lambda}^\alpha$  if $\| \partial^k_t e^{-t\sqrt{-\Delta}}f\|_{L^\infty(\mathbb{R}^n)}\le C t^{-k+\alpha}, k \ge \alpha.$ A posteriori these classes are seen to coincide with the $C^\alpha$ classes. It is interesting to notice that this description also covers the Zygmund class. In the present paper the importance of this picture is based on the fact that in order to prove boundedness properties of operators, one could avoid the long, tedious and sometimes cumbersome computations that are needed when the pointwise expressions are handled. This will be our case.

The characterization of H\"older spaces via the Poisson semigroup $e^{-t\sqrt{-\Delta}}$  raise the question of analyze some H\"older spaces associated to different   laplacians  and to find the pointwise and semigroup estimate characterizations. For the case of the Ornstein-Ulhenbeck operator in $\mathbb{R}^n$, $O=\frac1{2}\Delta -x\cdot \nabla$, the so-called {\it Gaussian Lipschitz spaces} were defined in 
\cite{GattoUrbina} as the collection of functions such that 
$\|\partial_y^k	e^{-y\sqrt{O}} f \|_{L^\infty(\mathbb{R}^n,\frac{e^{-|x|^2}}{\pi^{n/2}})}\leq C_k y^{-k+\alpha}, k = [\alpha]+1,$ where  $e^{-y\sqrt{O}}$ is the Poisson semigroup associated to the operator $O$. In the particular interval $0 < \alpha <1$, these Gaussian Lipschitz spaces have been recently characterized 
pointwise in \cite{Sjogren}.  If  $\mathcal{S} = -\Delta+ V$  is the Schr\"odinger operator in $\mathbb{R}^n, n \ge 3$,
 where $V$ satisfies satisfies a reverse H\"older inequality for some $q> n/2$, the classes $\Lambda_\mathcal{S}^\alpha, 0<\alpha <1$, were defined in \cite{TaoStingaTorreaZhang}.  The
authors prove that the classes can be described by a Campanato-BMO type condition, boundedness in these spaces of operators like fractional powers of $\mathcal{S}$  are considered. 
For the Hermite operator $\HH= -\Delta + |x|^2$ in $\mathbb{R}^n$, pointwise H\"older spaces, $C^{k,\alpha}_\HH$  were defined in \cite{ST2} and boundedness properties of Hermite fractional laplacian, $\HH^\alpha, 0 < \alpha < 1$, were considered.  In the case of parabolic operators of the type  $ \frac{\partial}{\partial t} u(t,x)= a^{ij}(t,x) \frac{\partial^2}{\partial x_i \partial x_j} u(t,x)+b^i(t,x) \frac{\partial}{\partial x_i }u(t,x)+c(t,x)u(t,x)+f(t,x),$ where $a,b,c $ are real valued and $c\le 0$, some pointwise  H\"older classes were introduced in \cite{Krylovbook}. Where    solvability  and a priori estimates were proved. For $\mathcal{M} =\partial_t + \Delta$, the  Poisson semigroup $e^{-y\sqrt{\mathcal{M}}}$  is used in \cite{ST3} for defined the corresponding H\"older classes. 
 The coincidence with the pointwise classes of Krylov were proved for the   $\alpha$ considered in \cite{Krylovbook}. This semigroup characterization was used to show new  regularity properties for fractional powers  $(\partial_t+\Delta_x)^{\pm \alpha}$.

 Now we shall present our results.

Along this paper we shall deal with the {\it parabolic Hermite operator}
\begin{equation}\label{oscilador}  \mathcal{L} := \partial_t + \HH= \partial_t -\Delta_x+ |x|^2, \, x \in \R^n, \, t >0.
\end{equation}
 As the operators $\partial_t$ and $\HH$ commute, the  heat semigroup   $e^{-y \LL}$  will be the composition 
of the heat semigroups $e^{-y\partial_t}$ and $ e^{-y \mathcal{H}}.$  As these semigroups are  well known, see \cite{Stempak} and \cite{Bernardis}, we shall have a satisfactory description of the operator  $e^{-y \LL}$. This description will be use  in order to define, among other operators, the Poisson semigroup $e^{-y\sqrt{\LL}} $, the {\it fractional  parabolic Hermite integrals} $\LL^{-\beta}, \, \beta >0$ and the {\it
fractional parabolic Hermite laplacian} $\LL^\beta, \, \beta >0$. See Section \ref{preliminares}.

Once the Poisson semigroup, $\mathcal{P}_y$   is introduced, see Section \ref{preliminares},  we  define the following associated classes of functions.
\begin{definition}\label{ZygmundPar}[Parabolic Hermite-Zygmund spaces] Let $\mathcal{P}_y = e^{-y\sqrt{\LL}}$ and  $\alpha>0$, we consider the class
		$$
	\Lambda^\alpha_{\mathcal{L}}=\left\{f: \:f\in L^\infty(\mathbb{R}^{n+1})\:\; {\rm and} \:\;  \left\|\partial_y^k	\mathcal{P}_y f \right\|_{L^\infty(\mathbb{R}^{n+1})}\leq C_k y^{-k+\alpha},\;\: {\rm with }\, k=[\alpha]+1, y>0. \right\},
	$$
	whose norm is given by $\|f\|_{	\Lambda^\alpha_{\mathcal{L}}}:=\|f\|_\infty+ C$, where $C$ is the infimum of the positive constants $C_k$ above.
\end{definition}
We will show, in Theorem \ref{teo151} that these classes have a pointwise description. Moreover, a restriction to functions depending only on $x$, produces a natural Definition  \ref{ZygmundPar2} and a Theorem \ref{teo16} for the case of Hermite operator in $\mathbb{R}^n$. 

 The operator $\HH$ can be factorized as $\HH= \frac12\sum_{i=1}^n (A_i A_{-i}+A_{-i}A_i),\, A_i=\partial_{x_i}+x_i, \,  A_{-i}=-\partial_{x_i}+x_i$. The first order operators $A_{\pm i}$ play the role, with respect to operator $\HH$,  of the derivatives $\pm \partial_{x_ i} $ with respect to the classical laplacian $\Delta$. See \cite{Stempak}, \cite{ST2}.

\begin{theorem}\label{teo151} Let $\mathcal{L} :=  \partial_t + \HH= \partial_t -\Delta_x+ |x|^2, \, x \in \R^n, \, t >0.$

	\begin{enumerate}
		\item Suppose that $0<\alpha<2$. Then $f\in 	\Lambda^\alpha_{\mathcal{L}}$ if and only if there exists a constant $C>0$ such that
		\begin{equation}\label{eq21} 
		\|f(\cdot-\tau,\cdot-z)+f(\cdot-\tau,\cdot+z)-2f(\cdot,\cdot)\|_{L^\infty(\mathbb{R}^{n+1})} \leq C(|\tau|^{1/2}+|z|)^{\alpha}, (\tau,z) \in\mathbb{R}^{n+1} 
		\end{equation}
	and $(1+|x|)^\alpha f\in L^\infty(\mathbb{R}^{n+1})$. In this case, if $K$ denotes the least constant $C$ for which the inequality above is true, then $\|u\|_{	\Lambda^\alpha_{\mathcal{L}}}:=[u]_{M^\alpha}+ K$.
		Where $[f]_{M^\alpha}=\|(1+|\cdot|)^\alpha f(\cdot,\cdot)\|_\infty.$

		\item Suppose that $\alpha>2$. Then $f\in	\Lambda^\alpha_{\mathcal{L}}$ if and only if
		$$
		A_i A_j f\in 	\Lambda^{\alpha-2}_{\mathcal{L}}, \:\;i,j=\pm1,\dots,\pm n,
		\quad \hbox{  and  } \quad 
		{\partial_t}f \in 	\Lambda^{\alpha-2}_{\mathcal{L}}.
		$$
		In this case the following equivalence holds
		$$
		\|f\|_{\Lambda^{\alpha}_{\mathcal{H}}}\sim  \sum_{i,j=\pm1}^{\pm n}\left(\|A_i A_j f\|_{	\Lambda^{\alpha-2}_{\mathcal{L}}}\right)+ \|{\partial_t}f \|_{	\Lambda^{\alpha-2}_{\mathcal{L}}}.
		$$

\end{enumerate}
\end{theorem}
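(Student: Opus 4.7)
My plan is to adapt the classical Stein--Taibleson characterisation of H\"older--Zygmund spaces via the Poisson semigroup to the parabolic Hermite setting. First, I would use Bochner subordination to write
$$\mathcal{P}_y f(t,x) = \frac{y}{2\sqrt{\pi}} \int_0^\infty \frac{e^{-y^2/(4s)}}{s^{3/2}}\, e^{-s\mathcal{L}} f(t,x)\, ds, \qquad e^{-s\mathcal{L}}f(t,x) = \int_{\mathbb{R}^n} G_s(x,z)\, f(t-s,z)\, dz,$$
where $G_s$ is the Mehler kernel of $e^{-s\mathcal{H}}$, and collect sharp size, symmetry and cancellation properties of the derivatives $\partial_y^k P_y(t,x;\tau,z)$: the identity $\int \partial_y^k P_y\,d\tau\,dz = 0$ for $k\ge 1$, the symmetry of $G_s(x,\cdot)$ around $z=x$ coming from the $|x-z|^2$ piece of the Mehler exponent, and a decay of the form $e^{-c\, s\,(1+|x|)^2}$ for large $|x|$ produced by the $|x|^2$ potential. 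This Mehler weight is precisely what forces the weighted hypothesis $(1+|x|)^\alpha f\in L^\infty$ into the picture.

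For the direction ``pointwise $\Rightarrow$ semigroup'' in part~(1), with $k=[\alpha]+1\in\{1,2\}$, cancellation and symmetrisation around $z=x$ yield
$$\partial_y^k \mathcal{P}_y f(t,x) = \tfrac12 \int \partial_y^k P_y(t,x;t-s,x-w) \bigl[f(t-s,x-w)+f(t-s,x+w)-2f(t,x)\bigr]\,ds\,dw,$$
so plugging in the second-difference hypothesis together with $(1+|x|)^\alpha f\in L^\infty$ (to control the region where the Mehler weight dominates over the second-difference bound) gives $\|\partial_y^k\mathcal{P}_y f\|_\infty \le C y^{\alpha-k}$. For the reverse direction I would set $\delta = (|s|^{1/2}+|w|)^2$ and telescope
$$f(t-s,x-w) + f(t-s,x+w) - 2f(t,x) = \bigl[\mathcal{P}_\delta f\text{-second difference at }(t,x)\bigr] + \bigl[\text{three }f-\mathcal{P}_\delta f\text{ terms}\bigr].$$
Each $f-\mathcal{P}_\delta f$ piece is estimated by one or two integrations of $\partial_y^j\mathcal{P}_y f$ against the hypothesis (this simultaneously handles $0<\alpha<1$, the Zygmund endpoint $\alpha=1$, and $1<\alpha<2$), and the first bracket by a Taylor expansion of $\mathcal{P}_\delta f$ in $s$ and $w$ whose derivatives $\partial_t \mathcal{P}_\delta f$ and $A_iA_j\mathcal{P}_\delta f$ are produced from $\partial_y^2\mathcal{P}_y = \mathcal{L}\mathcal{P}_y$ together with the factorisation $\mathcal{H} = \tfrac12\sum_i(A_iA_{-i}+A_{-i}A_i)$. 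Finally, the weighted estimate $(1+|x|)^\alpha f\in L^\infty$ is recovered by specialising the second-difference bound to pairs $(s,w)$ that isolate a single pointwise value of $f$ and exploiting the $|x|^2$ contribution of $\mathcal{L}$.

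For part~(2) I would argue by induction on $[\alpha]$. Writing
$$\partial_y^{[\alpha]+1}\mathcal{P}_y f = \partial_y^{[\alpha]-1}\mathcal{L}\mathcal{P}_y f = \partial_y^{[\alpha]-1}\Bigl(\partial_t\mathcal{P}_y f + \tfrac12\sum_i(A_iA_{-i}+A_{-i}A_i)\mathcal{P}_y f\Bigr),$$
and using that $\partial_t$ commutes with $\mathcal{P}_y$ while the commutators $[\mathcal{H},A_{\pm i}] = \mp 2 A_{\pm i}$ give only explicit, easily controllable corrections in the intertwining of $A_i$ with $\mathcal{P}_y$, one obtains the equivalence $\|f\|_{\Lambda^\alpha_{\mathcal{L}}}\sim \sum_{i,j=\pm 1}^{\pm n}\|A_iA_j f\|_{\Lambda^{\alpha-2}_{\mathcal{L}}}+\|\partial_t f\|_{\Lambda^{\alpha-2}_{\mathcal{L}}}$ by combining part~(1) at level $\alpha-2\in(0,2)$ with the inductive hypothesis for higher $\alpha$.

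The main obstacle is that $\mathcal{P}_y$ is not translation-invariant in $x$ and $\int G_s(x,z)\,dz$ is not constant in $x$, so the usual ``kernel has mean zero in $x$'' and ``translate-and-subtract'' manipulations from the Euclidean Poisson case are unavailable. Their role has to be taken by the symmetrisation $z\mapsto 2x-z$ and by a careful use of the Mehler weight, paired with the growth condition $(1+|x|)^\alpha f\in L^\infty$ which is exactly what absorbs the large-$|x|$ contribution; making these two ingredients dovetail, and correctly treating the Zygmund endpoint $\alpha=1$, is the main technical work.
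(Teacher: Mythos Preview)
Your overall strategy matches the paper's, but the displayed identity in the ``pointwise $\Rightarrow$ semigroup'' step of part~(1) is false as written, and this is not a cosmetic slip. The Poisson kernel $\mathcal{P}_y(\tau,x,z)$ (in the variables of \eqref{solution}) carries the factor $e^{-|2x-z|^2\tanh\tau/4}$, which is \emph{not} even in $z$, and moreover $\int \mathcal{P}_y(\tau,x,z)\,d\tau\,dz = e^{-y\sqrt{\mathcal{L}}}1(t,x)\neq 1$; hence neither ``symmetrisation around $z=x$'' nor ``$\int\partial_y^k P_y=0$'' holds. The paper's correct decomposition (see \eqref{ageneralizar}) has three terms: the symmetrised second difference $I_1$ that you wrote; an odd-kernel remainder $I_2=\tfrac12\int(\partial_y^k\mathcal{P}_y(\tau,x,z)-\partial_y^k\mathcal{P}_y(\tau,x,-z))f(t-\tau,x-z)$, handled by the mean value theorem applied to $e^{-|2x\pm z|^2\tanh\tau/4}$ and needing only $\|f\|_\infty$; and a mass-defect term $I_3=f(t,x)\,\partial_y^k(\mathcal{P}_y1-1)$, for which one needs the precise estimate $|e^{-y\sqrt{\mathcal{L}}}1(t,x)-1|\le C(1+|x|)^\alpha y^\alpha$ (Lemma~\ref{'lemma4'}(3)) together with $[f]_{M^\alpha}$. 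Your remark that $(1+|x|)^\alpha f\in L^\infty$ is used ``to control the region where the Mehler weight dominates'' misidentifies its role: it enters only through $I_3$, not through $I_1$.

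Two further points. First, your plan for recovering $(1+|x|)^\alpha f\in L^\infty$ from $f\in\Lambda^\alpha_{\mathcal{L}}$ (``specialising the second-difference bound to pairs $(s,w)$ that isolate a single pointwise value'') does not work: the second-difference bound alone gives no decay in $x$. The paper (Proposition~\ref{taman}) instead proves $\||\cdot|^\gamma\partial_y^m\mathcal{P}_y f\|_\infty\le C y^{-(\gamma+m)+\alpha}$ via kernel estimates and the semigroup property (Lemma~\ref{est2}), then integrates back in $y$ down to $y=1/|x|$; the integer values of $\alpha$ require an extra logarithmic computation. Second, for part~(2) the paper does not use the commutator relations $[\mathcal{H},A_{\pm i}]=\mp2A_{\pm i}$ at all: it first proves the equivalent statement with $\partial_{x_i}f,\,x_if\in\Lambda^{\alpha-1}_{\mathcal{L}}$ in place of $A_iA_jf\in\Lambda^{\alpha-2}_{\mathcal{L}}$ (Theorem~\ref{teo15}, via Propositions~\ref{derivent}--\ref{bajada}) by direct kernel manipulations and repeated use of $\partial_y^2\mathcal{P}_y=\mathcal{L}\mathcal{P}_y$. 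Your intertwining approach would force you to work with $e^{-y\sqrt{\mathcal{L}\mp2}}$, whose kernel estimates you would then have to redo; the paper's route avoids this.
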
 

The above results have the following parallel results in the case of Hermite operator	 $\HH= -\Delta_x+ |x|^2.$
\begin{definition}\label{ZygmundPar2}[Hermite-Zygmund spaces] Let $P_y = e^{-y\sqrt{\HH}}$ and  $\alpha>0$, we consider the class
		$$
	\Lambda^\alpha_{\mathcal{H}}=\left\{g: \:g\in L^\infty(\mathbb{R}^{ng})\:\; {\rm and} \:\;  \left\|\partial_y^k	P_y g \right\|_{L^\infty(\mathbb{R}^{n})}\leq C_k y^{-k+\alpha},\;\: {\rm with }\, k=[\alpha]+1, y>0.  \right\},
	$$
	whose norm is given by $\|g\|_{	\Lambda^\alpha_{\mathcal{H}}}:=\|g\|_\infty+ C$, where $C$ is the infimum of the positive constants $C_k$ above.
\end{definition}

\begin{theorem}\label{teo16}  Let $g\in L^\infty(\mathbb{R}^n).$
\begin{enumerate}
		\item Suppose that $0<\alpha<2$. Then $g\in \Lambda^\alpha_{\mathcal{H}}$ if and only if   $(1+|\cdot|)^\alpha g\in L^\infty(\mathbb{R}^{n})$ and there exists a constant $C>0$ such that
		\begin{equation*}
		\|g(\cdot-z)+g(\cdot+z)-2g(\cdot)\|_{L^\infty(\mathbb{R}^n)}\leq C|z|^{\alpha}, \, z \in \mathbb{R}^n.
		\end{equation*}
	 In this case, if $K$ denotes the least constant $C$ for which the inequality above is true, then $\|g\|_{\Lambda^\alpha_{\mathcal{H}}}:=[g]_{M^\alpha}+ K$. Where $[g]_{M^\alpha}=\|(1+|\cdot|)^\alpha g(\cdot)\|_\infty.$ 
		\item Suppose that $\alpha>1$. Then $g\in\Lambda^{\alpha}_{\mathcal{H}}$ if and only if
		$$
		\frac{\partial}{\partial x_i}g\in \Lambda^{\alpha-1}_{\mathcal{H}} \hbox { and }  x_i g \in \Lambda^{\alpha-1}_{\mathcal{H}} \:\;i=1,\dots,n.
		$$
		In this case the following equivalence holds
		$$
		\|g\|_{\Lambda^{\alpha}_{\mathcal{H}}}\sim \|g\|_\infty+ \sum_{i=1}^n \Big( \, \left\|\frac{\partial}{\partial x_i}  g\right\|_{\Lambda^{\alpha-1}_{\mathcal{H}}} + \Big\|x_i g \Big\|_{\Lambda^{\alpha-1}_{\mathcal{H}}} \,  \Big).
		$$
	\end{enumerate}
\end{theorem}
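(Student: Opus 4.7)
The plan is to deduce Theorem~\ref{teo16} from Theorem~\ref{teo151} via a $t$-independent extension: given $g\in L^\infty(\mathbb{R}^n)$, set $\tilde g(t,x):=g(x)\in L^\infty(\mathbb{R}^{n+1})$. Since $\partial_t\tilde g\equiv 0$, we have $\mathcal{L}\tilde g=\mathcal{H}g$, and $e^{-s\mathcal{L}}\tilde g=e^{-s\mathcal{H}}g$ (the translation part $e^{-s\partial_t}$ fixes $t$-independent functions); by the subordination formula this lifts to $\mathcal{P}_y\tilde g(t,x)=P_y g(x)$, so $\|\tilde g\|_{\Lambda^\alpha_\mathcal{L}}=\|g\|_{\Lambda^\alpha_\mathcal{H}}$ for every $\alpha>0$. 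Part (1) is then immediate from Theorem~\ref{teo151}(1) applied to $\tilde g$: the parabolic growth condition reduces literally to $(1+|x|)^\alpha g\in L^\infty(\mathbb{R}^n)$, and the parabolic second-difference bound, being $\tau$-independent on the left, is equivalent (take $\tau=0$, respectively use $|z|^\alpha\le(|\tau|^{1/2}+|z|)^\alpha$) to the stated Hermite bound $\|g(\cdot-z)+g(\cdot+z)-2g\|_\infty\le C|z|^\alpha$.

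Part (2) is not a direct corollary of Theorem~\ref{teo151}(2), since the latter characterizes $\Lambda^\alpha_\mathcal{L}$ for $\alpha>2$ via the second-order operators $A_iA_j$, whereas here we need a first-order characterization valid for all $\alpha>1$. I would proceed by induction on $[\alpha]$, with base case $0<\alpha<2$ supplied by part (1), and prove the first-order reduction
\[
g\in\Lambda^\alpha_\mathcal{H}\iff A_ig,\ A_{-i}g\in\Lambda^{\alpha-1}_\mathcal{H}\ \text{ for all }i=1,\dots,n,
\]
which is equivalent to the stated characterization via $\partial_{x_i}=\tfrac12(A_i-A_{-i})$ and $x_i=\tfrac12(A_i+A_{-i})$. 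The key ingredients are the commutator identities $[\mathcal{H},A_i]=-2A_i$ and $[\mathcal{H},A_{-i}]=2A_{-i}$, giving the operator identities $A_ie^{-u\mathcal{H}}=e^{-2u}e^{-u\mathcal{H}}A_i$ and $e^{-u\mathcal{H}}A_{-i}=e^{-2u}A_{-i}e^{-u\mathcal{H}}$. Subordinating produces the intertwinings $A_iP_yg=\mathcal{Q}_y(A_ig)$ and $P_y(A_{-i}g)=A_{-i}\mathcal{Q}_yg$, where $\mathcal{Q}_y:=e^{-y\sqrt{\mathcal{H}+2}}$. Since the subordination kernel of $\mathcal{Q}_y$ differs from that of $P_y$ only by the extra factor $e^{-2u}$, both semigroups satisfy equivalent Zygmund-type $y^{-k+\alpha}$ bounds; $y$-differentiating the intertwinings and taking $L^\infty$-norms then transfers the estimates between $g$ and $A_{\pm i}g$.

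The principal obstacle is the precise comparison between the shifted Poisson semigroup $\mathcal{Q}_y$ and $P_y$: one must verify that the $\Lambda^{\alpha-1}_\mathcal{H}$ norm (defined via $P_y$) is equivalent to its $\mathcal{Q}_y$-analogue, a point handled through the extra $e^{-2u}$ weight in the subordination integral, which only enhances decay. Once this comparison is in place, iterating the first-order step $\alpha\mapsto\alpha-1$ brings any $\alpha>1$ into the range of part (1) and closes the induction; the $(1+|x|)^{\alpha-1}$-growth controls on $A_{\pm i}g$ are inherited from the part (1) characterization applied to each successive reduction.
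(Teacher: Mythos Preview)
Your reduction for part~(1) is exactly the paper's argument: set $f(t,x)=g(x)$, observe $\mathcal{P}_yf=P_yg$, and read off the equivalence from Theorem~\ref{teo151}(1).

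For part~(2) the paper takes the same $t$-independent extension but does not use intertwinings. It invokes Propositions~\ref{derivadas2} and~\ref{porx} (both stated for all $\alpha>1$): if $f\in\Lambda^\alpha_{\mathcal{L}}$ then $\partial_{x_i}f,\,x_if\in\Lambda^{\alpha-1}_{\mathcal{L}}$. The converse is Proposition~\ref{bajada}; with $\partial_tf\equiv0$ its proof simplifies and covers $1<\alpha\le2$ by estimating $\partial_y^2P_yg=\mathcal{H}P_yg$ directly. All three propositions rest on the explicit kernel bounds of Lemma~\ref{est2} and Lemma~\ref{lemma5B}, in particular the estimate $\|\,|\cdot|^\gamma\partial_y^m\partial_{x_i}^j P_yg\|_\infty\le C\|g\|_{\Lambda^\alpha_{\mathcal{H}}}\,y^{-(\gamma+m+j)+\alpha}$, obtained via the splitting $P_y=P_{y/2}P_{y/2}$ and pointwise kernel differentiation.

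Your intertwining route is elegant but does not bypass this analytic work. From $A_iP_yg=\mathcal{Q}_y(A_ig)$ you get $\partial_y^k\mathcal{Q}_y(A_ig)=A_i\,\partial_y^kP_yg$; even granting the $P_y\leftrightarrow\mathcal{Q}_y$ equivalence of Zygmund norms, to conclude $A_ig\in\Lambda^{\alpha-1}_{\mathcal{H}}$ from $g\in\Lambda^\alpha_{\mathcal{H}}$ you must bound $\|(\partial_{x_i}+x_i)\partial_y^kP_yg\|_\infty$ by $Cy^{-(k+1)+\alpha}$, and that is precisely the content of Lemma~\ref{est2}, not a formal consequence of the $L^\infty$ bound on $\partial_y^kP_yg$ alone. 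The same obstruction appears in the converse: from $A_{\pm i}g\in\Lambda^{\alpha-1}_{\mathcal{H}}$ you need to rebuild $\|\partial_y^\ell P_yg\|_\infty\le Cy^{-\ell+\alpha}$, and the intertwinings only reduce this to applying $A_{\mp i}$ to $\partial_y^k\mathcal{Q}_y$-expressions, which is again a Lemma~\ref{est2}-type estimate. So the commutator algebra repackages the problem rather than removing the kernel analysis that the paper carries out directly.
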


As we said before we shall obtain  regularity results of operators associated to $\LL$ when acting over the classes defined above.  We shall consider positive, negative and imaginary powers of the operators $\LL$ and $\HH$, as well as Riesz transforms. For the appropriated definitions see Section \ref{preliminares}.

\begin{theorem}\label{teoHolder}
	Let $0< 2\beta < \alpha $ and  $f\in\Lambda^\alpha_{\mathcal{L}},$ (respectively  $g\in\Lambda^\alpha_{\mathcal{H}}$), then 
	$\mathcal{L}^\beta f\in\Lambda_{\mathcal{L}}^{\alpha-2\beta}$ (respectively 
	$\mathcal{H}^\beta g\in\Lambda_{\mathcal{H}}^{\alpha-2\beta}$) and
$$	\|\mathcal{L}^\beta f\|_{\Lambda_{\mathcal{L}}^{\alpha-2\beta}}\le C \|f\|_{\Lambda^\alpha_{\mathcal{L}}}, \quad 
	\hbox{(respectively  }	\|\mathcal{H}^\beta g\|_{\Lambda_{\mathcal{H}}^{\alpha-2\beta}}\le C \|g\|_{\Lambda^\alpha_{\mathcal{H}}} \hbox{)}.
	$$
\end{theorem}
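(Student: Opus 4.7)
The plan is to use the semigroup representation
\begin{equation*}
\mathcal{L}^\beta f = c_{k,\beta}\int_0^\infty \partial_y^{2k}\mathcal{P}_y f\; y^{2k-2\beta-1}\,dy,
\end{equation*}
valid for any integer $k$ with $2k>\max\{2\beta,\alpha\}$, obtained from $\partial_y^2\mathcal{P}_y=\mathcal{L}\mathcal{P}_y$ and the subordination identity recalled in Section~\ref{preliminares}. Combining the defining estimate of $\Lambda^\alpha_{\mathcal{L}}$ with the bootstrap $\|\partial_y^j\mathcal{P}_y f\|_\infty\le C y^{-j+\alpha}\|f\|_{\Lambda^\alpha_{\mathcal{L}}}$, valid for every integer $j\ge[\alpha]+1$ (proved below), the integrand on $(0,1]$ is dominated by $Cy^{\alpha-2\beta-1}$, which is integrable since $\alpha-2\beta>0$; on $[1,\infty)$ one uses instead the analytic-semigroup bound $\|\partial_y^{2k}\mathcal{P}_y f\|_\infty\le Cy^{-2k}\|f\|_\infty$, giving integrand $\le Cy^{-2\beta-1}$, also integrable. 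Hence $\mathcal{L}^\beta f\in L^\infty(\mathbb{R}^{n+1})$ with $\|\mathcal{L}^\beta f\|_\infty\le C\|f\|_{\Lambda^\alpha_{\mathcal{L}}}$.

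For the H\"older--Zygmund estimate, set $m=[\alpha-2\beta]+1$ and apply $\partial_z^m\mathcal{P}_z$ under the integral. The semigroup identity $\mathcal{P}_z\mathcal{P}_y=\mathcal{P}_{z+y}$ together with $\partial_z\mathcal{P}_{z+y}=\partial_y\mathcal{P}_{z+y}$ give
\begin{equation*}
\partial_z^m\mathcal{P}_z(\mathcal{L}^\beta f) = c_{k,\beta}\int_0^\infty \partial_u^{m+2k}\mathcal{P}_u f\,\big|_{u=z+y}\; y^{2k-2\beta-1}\,dy.
\end{equation*}
Plugging in the bound $\|\partial_u^{m+2k}\mathcal{P}_u f\|_\infty\le C\|f\|_{\Lambda^\alpha_{\mathcal{L}}}\,u^{-(m+2k)+\alpha}$ (available because $m+2k\ge[\alpha]+1$) and rescaling $y=zu$, one gets
\begin{equation*}
\|\partial_z^m\mathcal{P}_z(\mathcal{L}^\beta f)\|_\infty \le C\|f\|_{\Lambda^\alpha_{\mathcal{L}}}\, z^{-m+\alpha-2\beta}\int_0^\infty (1+u)^{\alpha-m-2k}\, u^{2k-2\beta-1}\,du.
\end{equation*}
The $u$-integral is finite: at $0$ because $2k-2\beta>0$, at $\infty$ because $m>\alpha-2\beta$ by the choice of $m$. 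Together with the $L^\infty$ bound above, this is the defining inequality for $\mathcal{L}^\beta f\in\Lambda^{\alpha-2\beta}_{\mathcal{L}}$.

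The Hermite case follows by specialization: for $f(t,x)=g(x)$ the operators $\mathcal{P}_y$ and $\mathcal{L}^\beta$ reduce to $P_y$ and $\mathcal{H}^\beta$, and the computation above goes through verbatim, giving the parallel estimate for $g\in\Lambda^\alpha_{\mathcal{H}}$. The main obstacle is the bootstrap used in the first paragraph, namely passing from the defining estimate $\|\partial_y^{k_0}\mathcal{P}_y f\|_\infty\le Cy^{-k_0+\alpha}$ at $k_0=[\alpha]+1$ to the same estimate at arbitrary $j\ge k_0$. One argues via the semigroup factorization $\partial_y^j\mathcal{P}_y f = (-1)^{j+k_0}(-\sqrt{\mathcal{L}})^{j-k_0}\mathcal{P}_{y/2}\bigl(\partial_s^{k_0}\mathcal{P}_s f|_{s=y/2}\bigr)$, combining the analyticity bound $\|(-\sqrt{\mathcal{L}})^{j-k_0}\mathcal{P}_{y/2}\|_{L^\infty\to L^\infty}\le Cy^{-(j-k_0)}$ (which itself rests on derivative estimates for the Mehler kernel of the Hermite heat semigroup via subordination) with the defining inequality at $k_0$. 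Once this is in place, all of the above reduces to the elementary Fubini/change-of-variables computation displayed.
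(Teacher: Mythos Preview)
Your approach is correct and in spirit parallel to the paper's, but you use a different integral representation of $\mathcal{L}^\beta$. The paper works directly with its definition
\[
\mathcal{L}^\beta f=\frac{1}{c_{2\beta}}\int_0^\infty\bigl(\mathcal{P}_\nu-I\bigr)^{\ell}f\,\frac{d\nu}{\nu^{1+2\beta}},\qquad \ell=[2\beta]+1,
\]
rewrites $(\mathcal{P}_\nu-I)^\ell f$ as an $\ell$-fold iterated integral of $\partial_w^\ell\mathcal{P}_w f$, applies $\partial_y^m\mathcal{P}_y$ with $m=[\alpha-2\beta]+1$, and then splits the outer integral at $\nu=y$ to reach the bound $Cy^{-m+\alpha-2\beta}$. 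Your representation $\mathcal{L}^\beta f=c_{k,\beta}\int_0^\infty\partial_y^{2k}\mathcal{P}_y f\,y^{2k-2\beta-1}\,dy$ leads to a tidier computation: after applying $\partial_z^m\mathcal{P}_z$ and rescaling $y=zu$, the estimate collapses to a single Beta-type integral with no case analysis. Both routes rest on the same bootstrap $\|\partial_y^j\mathcal{P}_y f\|_\infty\le Cy^{-j+\alpha}$ for all $j\ge[\alpha]+1$, which is precisely Lemma~\ref{'lemma16B'} in the paper (so your ``main obstacle'' is already handled). The one point you should address explicitly is that your formula for $\mathcal{L}^\beta$ is \emph{not} the paper's definition; you need to check (for instance via the Fourier--Hermite transform on a dense class, or by an integration-by-parts argument linking the two integrals) that the two representations coincide on $\Lambda^\alpha_{\mathcal{L}}$. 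This is routine but not automatic, and the paper's approach has the advantage of bypassing it entirely by staying with the defining expression.
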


\begin{theorem}\label{teoSchau} Let $0< \alpha, \beta.$
		\begin{itemize}
		\item[(i)] Given  $f\in 	\Lambda^\alpha_{\mathcal{L}}$ (respectively
		$g\in 	\Lambda^\alpha_{\mathcal{H}}$), then $\mathcal{L}^{-\beta} f \in 	\Lambda^{\alpha+2\beta}_{\mathcal{L}} $\,  (respectively $\mathcal{H}^{-\beta} g \in 	\Lambda^{\alpha+2\beta}_{\mathcal{H}} $) and
	$$
		\|\mathcal{L}^{-\beta} f\|_{	\Lambda^{\alpha+2\beta}_{\mathcal{L}}}\le C \|f\|_{	\Lambda^\alpha_{\mathcal{L}}}, \, \hbox{(respectively  }
		\|\mathcal{H}^{-\beta} g\|_{	\Lambda^{\alpha+2\beta}_{\mathcal{H}}}\le C \|g\|_{	\Lambda^\alpha_{\mathcal{H}}}  \hbox{)}.
		$$
		\item[(ii)] If $f\in L^\infty(\R^{n+1})$, \,  (respectively $g\in L^\infty(\R^n)$), then 		$$
		\|\mathcal{L}^{-\beta} f\|_{	\Lambda^\beta_{\mathcal{L}}}\le C \|f\|_{\infty}, \,  \hbox{(respectively   }  \|\mathcal{H}^{-\beta} g\|_{	\Lambda^\beta_{\mathcal{H}}}\le C \|g\|_{\infty}  \hbox{)}.
		$$
	\end{itemize} 
\end{theorem}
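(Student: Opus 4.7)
The plan is to exploit the subordination formula (recorded in Section \ref{preliminares})
\[
\mathcal{L}^{-\beta} f \;=\; \frac{1}{\Gamma(2\beta)} \int_0^\infty \mathcal{P}_s f \; s^{2\beta-1}\, ds,
\]
which realises $\mathcal{L}^{-\beta}$ as a \emph{positive} integral of the Poisson semigroup. The point is that $\Lambda^{\alpha}_{\mathcal{L}}$ is defined directly through $\mathcal{P}_y$, so no delicate cancellation is required: composing with $\mathcal{P}_y$ (semigroup property $\mathcal{P}_y\mathcal{P}_s=\mathcal{P}_{y+s}$) and differentiating $K$ times in $y$ gives
\[
\partial_y^K \mathcal{P}_y \mathcal{L}^{-\beta} f \;=\; \frac{1}{\Gamma(2\beta)}\int_0^\infty (\partial_t^K \mathcal{P}_t f)\big|_{t=y+s}\; s^{2\beta-1}\, ds.
\]

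For part (i) I would take $K=[\alpha+2\beta]+1$, and for part (ii) any integer $K>2\beta$. Before inserting a bound on $\partial_t^K \mathcal{P}_t f$ a preliminary lemma is needed, to the effect that the condition in Definition \ref{ZygmundPar} with $k=[\alpha]+1$ is equivalent to the same estimate for every integer $k>\alpha$. The upward direction uses the factorisation $\partial_t^k \mathcal{P}_t = (\partial_t^{k-m} \mathcal{P}_{t/2}) \circ (\partial_t^m \mathcal{P}_{t/2})$ together with the heat-type bound $\|\partial_t^j \mathcal{P}_t\|_{L^\infty\to L^\infty}\le C_j t^{-j}$ arising from the Mehler kernel of $e^{-s\mathcal{H}}$; the downward direction uses $\partial_t^{k-1}\mathcal{P}_t g=-\int_t^\infty \partial_s^k \mathcal{P}_s g\,ds$, which is legitimate because the positive spectrum of $\mathcal{H}$ forces $\partial_t^k \mathcal{P}_t g\to 0$ as $t\to\infty$. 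With this in hand, (i) yields $\|\partial_t^K \mathcal{P}_t f\|_\infty\le C\, t^{-K+\alpha}\|f\|_{\Lambda^{\alpha}_{\mathcal{L}}}$; substituting $s=yu$ in the displayed integral,
\[
\|\partial_y^K \mathcal{P}_y \mathcal{L}^{-\beta} f\|_\infty \;\le\; C\|f\|_{\Lambda^{\alpha}_{\mathcal{L}}}\; y^{-K+\alpha+2\beta}\int_0^\infty (1+u)^{-K+\alpha}u^{2\beta-1}\,du,
\]
and the last integral is finite because $K-\alpha>2\beta>0$. This is exactly the defining estimate for $\Lambda^{\alpha+2\beta}_{\mathcal{L}}$. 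Part (ii) is the same computation with $\|\partial_t^K \mathcal{P}_t f\|_\infty\le C\, t^{-K}\|f\|_\infty$ in place of the $\Lambda^\alpha$ estimate, giving in fact $\mathcal{L}^{-\beta} f\in \Lambda^{2\beta}_{\mathcal{L}}\subset\Lambda^{\beta}_{\mathcal{L}}$.

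What remains is the $L^\infty$ control on $\mathcal{L}^{-\beta}f$ itself: by the subordination formula this reduces to integrability of $\|\mathcal{P}_s f\|_\infty s^{2\beta-1}$, which follows from $\|\mathcal{P}_s f\|_\infty\le\|f\|_\infty$ near the origin together with the exponential decay of $\mathcal{P}_s$ on $L^\infty$ as $s\to\infty$ (a consequence of the smallest eigenvalue $n$ of $\mathcal{H}$). The pointwise characterisation in Theorem \ref{teo151} further demands the weighted bound $(1+|x|)^{\alpha+2\beta}\mathcal{L}^{-\beta}f\in L^\infty$, which I would obtain from explicit size estimates on the Mehler-type kernel of $\mathcal{L}^{-\beta}$. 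The main technical obstacle I foresee is precisely the uniform semigroup derivative estimate $\|\partial_t^K \mathcal{P}_t\|_{L^\infty\to L^\infty}\lesssim t^{-K}$ and its quantitative tail in the non-convolution setting; once that is in place the Schauder estimate reduces to an elementary beta-function integration. The parallel results for $\mathcal{H}^{-\beta}$ are proved word for word, replacing $\mathcal{P}_y$ by $P_y=e^{-y\sqrt{\mathcal{H}}}$.
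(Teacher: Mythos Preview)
Your approach is essentially identical to the paper's: choose $\ell=[\alpha+2\beta]+1$, pass $\partial_y^\ell \mathcal{P}_y$ under the integral (justified by Lemma \ref{Lpotencias}), use the semigroup property together with Lemma \ref{'lemma16B'} to get $\|\partial_w^\ell \mathcal{P}_w f\|_\infty\le C(y+\nu)^{-\ell+\alpha}$, and evaluate the resulting beta integral; for part (ii) the same computation with the bare bound from Lemma \ref{lemma5B}(ii) gives $\Lambda^{2\beta}_{\mathcal{L}}\subset\Lambda^{\beta}_{\mathcal{L}}$. One remark: you do \emph{not} need to establish the weighted bound $(1+|x|)^{\alpha+2\beta}\mathcal{L}^{-\beta}f\in L^\infty$ separately, since the space $\Lambda^{\alpha+2\beta}_{\mathcal{L}}$ is defined purely through the semigroup condition (Definition \ref{ZygmundPar}) and the weighted decay is then a \emph{consequence} via Proposition \ref{taman}.
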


We  also get the boundedness of the multiplier operator of the Laplace transform type on the spaces $	\Lambda^\alpha_{\mathcal{L}}$ and $	\Lambda^\alpha_{\HH}$. We recall to the reader that the imaginary powers $\lambda^{i\gamma}$ are examples of multipliers of Laplace transform type.
In \cite{TaoStingaTorreaZhang}, this result is proved for every Schr\"odinger operator  when $0<\alpha<1$.  
 \begin{theorem}\label{multiplicador}
	Let $a$ be a bounded function on $[0,\infty)$ and consider
	$$
	m(\lambda)=\lambda^{1/2}\int_0^\infty e^{-s\lambda^{1/2}}a(s)ds, \:\; \lambda>0.
	$$
	Then, for every $\alpha>0$, the multiplier operator of the Laplace transform type $m(\LL)$ (respectively $m(\HH)$) is bounded from $\Lambda_{\mathcal{L}}^\alpha$ (respectively $\Lambda_{\mathcal{H}}^\alpha$)  into itself.
	\end{theorem}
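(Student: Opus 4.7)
The plan is to verify both components of the norm $\|m(\LL) f\|_{\Lambda^\alpha_\LL}$ separately: the $L^\infty$ bound and the growth bound on $\partial_y^k \PP_y(m(\LL) f)$ at $k=[\alpha]+1$. The Hermite case $m(\HH)$ is completely parallel (same spectral-gap mechanism for $\HH$ is available and is the only ingredient used below), so I describe only the parabolic case.

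\medskip

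\textbf{Main identity and seminorm bound.} The operators $\PP_y$ and $m(\LL)$ commute through the spectral calculus of $\LL$; combined with $\partial_s e^{-s\sqrt\LL} = -\sqrt\LL\, e^{-s\sqrt\LL}$ and the definition of $m$, this yields the Bochner-integral representation
\[
\PP_y(m(\LL) f) = m(\LL)\PP_y f = -\int_0^\infty a(s)\,\partial_s\PP_{s+y} f\,ds, \qquad y>0.
\]
Since $\PP_{s+y} f$ depends only on $s+y$, differentiating $k$ times in $y$ under the integral produces
\[
\partial_y^k \PP_y(m(\LL) f) = -\int_0^\infty a(s)\,\partial_y^{k+1}\PP_{s+y} f\,ds.
\]
A standard bootstrap using $\partial_y\PP_y = -\sqrt\LL\,\PP_y$ together with the smoothing bound $\|\sqrt\LL\,\PP_{y/2} h\|_\infty \le C y^{-1}\|h\|_\infty$ (routine from Mehler-type kernel estimates for the Hermite Poisson semigroup) extends the defining inequality for $\Lambda^\alpha_\LL$ to $\|\partial_y^j\PP_y f\|_\infty\le C_j y^{-j+\alpha}\|f\|_{\Lambda^\alpha_\LL}$ for every $j\ge[\alpha]+1$. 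Applied with $j=k+1=[\alpha]+2$,
\[
\bigl\|\partial_y^k\PP_y(m(\LL) f)\bigr\|_\infty \le \|a\|_\infty\int_0^\infty C_{k+1}(s+y)^{\alpha-k-1}\,ds = \frac{C\|a\|_\infty}{k-\alpha}\,y^{-k+\alpha}\,\|f\|_{\Lambda^\alpha_\LL},
\]
the integral converging because $\alpha-k-1<-1$.

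\medskip

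\textbf{Uniform $L^\infty$ bound --- the main obstacle.} The subtler step is to establish $\|m(\LL) f\|_\infty \le C\|f\|_{\Lambda^\alpha_\LL}$; the defining integral is not absolutely convergent in general, so I prove
\[
\sup_{y>0}\bigl\|\PP_y m(\LL) f\bigr\|_\infty \le C\|a\|_\infty\|f\|_{\Lambda^\alpha_\LL},
\]
and recover $m(\LL) f(x,t) = \lim_{y\to 0^+}\PP_y m(\LL) f(x,t)$ pointwise. I split the main identity at $s=1$. The tail $\int_1^\infty a(s)\,\partial_s\PP_{s+y} f\,ds$ is dominated by the exponential $L^\infty$-decay $\|\PP_s h\|_\infty\le Ce^{-cs}\|h\|_\infty$ valid for $s\ge 1/2$, itself obtained by subordinating the Mehler kernel estimate $\|e^{-s\HH} h\|_\infty\le Ce^{-sn}\|h\|_\infty$ (positive spectral gap $\sigma(\HH)\subset[n,\infty)$); this yields a uniform bound of order $\|a\|_\infty\|f\|_\infty$. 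For the bulk $\int_0^1$, I estimate $\|\partial_s\PP_s f\|_\infty$ from the $\Lambda^\alpha$-norm: when $\alpha<1$ the defining inequality gives $\|\partial_s\PP_s f\|_\infty\le Cs^{\alpha-1}$ directly and $\int_0^1(s+y)^{\alpha-1}ds\le\alpha^{-1}(1+y)^\alpha$ is uniformly bounded in $y>0$; when $\alpha\ge 1$, integrating the higher-derivative bound down to order~$1$ shows $\|\partial_s\PP_s f\|_\infty$ is at worst logarithmic at $s=0$, in any case uniformly integrable on $(0,1)$.

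\medskip

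The essential difficulty is precisely this uniform $L^\infty$ bound: the representation integral fails absolute convergence at both endpoints---at $s\to 0^+$ for small $\alpha$, and at $s\to\infty$ for $f$ without a priori decay---so one must simultaneously invoke the H\"older regularity of $f$ at short scales \emph{and} the spectral-gap-driven exponential decay of the Hermite Poisson semigroup at long scales, two features specific to $\Lambda^\alpha_\LL$ and not available for arbitrary bounded functions.
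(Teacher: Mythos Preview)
Your argument is correct, but the paper takes a shorter path by factoring the operator. Rather than estimating $m(\LL)f$ directly, the paper writes $m(\LL)=\LL^{1/2}\circ T$ with $Tf=\int_0^\infty a(s)\,\mathcal{P}_s f\,ds$, shows that $T$ maps $\Lambda^\alpha_\LL$ into $\Lambda^{\alpha+1}_\LL$, and then invokes Theorem~\ref{teoHolder} (boundedness of $\LL^{1/2}:\Lambda^{\alpha+1}_\LL\to\Lambda^\alpha_\LL$). In that scheme the $L^\infty$ bound is immediate from the kernel estimate $\|\mathcal{P}_s f\|_\infty\le C\min(1,s^{-2})\|f\|_\infty$ of Lemma~\ref{lemma5B}, so only $f\in L^\infty$ is needed and there is no convergence issue at either endpoint; the seminorm computation is the same as yours (one extra $y$-derivative, integrate $(y+s)^{\alpha-\ell}$ in $s$). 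What you call ``the main obstacle'' thus disappears entirely under the factorization.

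Your direct route avoids appealing to Theorem~\ref{teoHolder} and is therefore more self-contained, at the cost of the endpoint analysis you describe. Two small points to tighten: the case $\alpha\ge 1$ in your $L^\infty$ argument (``at worst logarithmic'') should be spelled out by integrating the bound $\|\partial_y^{[\alpha]+1}\mathcal{P}_y f\|_\infty\le Cy^{\alpha-[\alpha]-1}$ down to first order, which indeed gives $\|\partial_s\mathcal{P}_s f\|_\infty$ bounded (or logarithmic only in the borderline integer case) for $s\in(0,1)$; and the exponential decay you invoke for large $s$ is more than necessary, since the polynomial decay $\|\mathcal{P}_s f\|_\infty\le Cs^{-2}\|f\|_\infty$ from Lemma~\ref{lemma5B}(ii) already suffices and is what the paper actually uses.
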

In \cite{TaoStingaTorreaZhang}, this result is proved for every Schr\"odinger operator  when $0<\alpha<1$.

\begin{theorem}\label{Rieszm}
Consider  the Parabolic Hermite Riesz  transforms of order $m\ge 1$ defined by
	$$R_\nu=(A_{\pm 1}^{\nu_1}A_{\pm 2}^{\nu_2}\dots A_{\pm n}^{\nu_n})\LL^{-m/2}\,  \hbox{  and } \,  R_m = \partial_t^m \LL^{-m} $$ where $\nu_i \ge 0, i=1,\dots, n $ and $|\nu|=\nu_1+\dots+\nu_n=m$. Let $\alpha >0$, then   $R_\nu$ and $R_m$ are bounded from $	\Lambda^\alpha_{\mathcal{L}}$ into itself.
	A parallel result holds for the operators $(A_{\pm 1}^{\nu_1}A_{\pm 2}^{\nu_2}\dots A_{\pm n}^{\nu_n})\HH^{-m/2}$ when acting on the spaces $	\Lambda^\alpha_{\mathcal{H}}.$
\end{theorem}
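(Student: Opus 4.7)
The strategy is to balance the regularity gain produced by Theorem \ref{teoSchau}(i) against the regularity loss coming from applying the first-order differential operators, as captured by Theorems \ref{teo151}(2) and \ref{teo16}(2).

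Consider first the time-Riesz transform $R_m = \partial_t^m \LL^{-m}$. Because $\partial_t$ commutes with $\HH$, it commutes with every function of $\LL$ defined through the spectral calculus, in particular with $\LL^{-m}$ and with $\mathcal{P}_y$. Starting from $f \in \Lambda^\alpha_{\mathcal{L}}$, Theorem \ref{teoSchau}(i) gives $\LL^{-m} f \in \Lambda^{\alpha+2m}_{\mathcal{L}}$ with norm control. Iterating Theorem \ref{teo151}(2) peels off one $\partial_t$ at a time, reducing the index by $2$ per step; the hypothesis at the $j$-th step, $\alpha + 2(m-j+1) > 2$, becomes $\alpha > 0$ at the last step, so all $m$ iterations are legal, and $R_m f \in \Lambda^\alpha_{\mathcal{L}}$ with the required estimate.

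For the spatial parabolic Riesz transforms $R_\nu$ with $|\nu|=m$, apply Theorem \ref{teoSchau}(i) to place $\LL^{-m/2} f$ in $\Lambda^{\alpha+m}_{\mathcal{L}}$ and then recover $\Lambda^\alpha_{\mathcal{L}}$ by acting with the $m$ first-order operators $A_{\pm i}$ on the left. When $m$ is even, partition them into $m/2$ consecutive pairs; Theorem \ref{teo151}(2) makes each pair $A_i A_j$ lower the index by $2$, so after $m/2$ reductions we reach $\Lambda^\alpha_{\mathcal{L}}$. When $m$ is odd, split $\LL^{-m/2} = \LL^{-(m-1)/2}\LL^{-1/2}$, pair off $m-1$ of the $A$'s against $\LL^{-(m-1)/2}$ as in the even case, and the problem reduces to the boundedness on $\Lambda^{\alpha+1}_{\mathcal{L}}$ of a single first-order Riesz transform $A_{\pm i}\LL^{-1/2}$. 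This base case is the main obstacle, because Theorem \ref{teo151}(2) only lowers the index in pairs of two units. One resolves it by writing $\LL^{-1/2} h = c \int_0^\infty \mathcal{P}_y h\,dy$, commuting $A_{\pm i}$ inside the integral, and estimating $\partial_z^k \mathcal{P}_z[A_{\pm i}\LL^{-1/2} h]$ directly from pointwise Mehler-kernel bounds on $A_{\pm i}\mathcal{P}_y$, combined with the $(1+|\cdot|)^\alpha h \in L^\infty$ piece of the norm supplied by the pointwise characterization of Theorem \ref{teo151}(1).

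The Hermite parallel is uniform in the parity of $m$, because Theorem \ref{teo16}(2) already asserts that the single operators $\partial_{x_i}$ and multiplication by $x_i$ each lower the index of $\Lambda^\alpha_{\mathcal{H}}$ by one unit, hence so do the first-order operators $A_{\pm i} = \pm\partial_{x_i} + x_i$ by linearity. After Theorem \ref{teoSchau}(i) has placed $\HH^{-m/2} g$ in $\Lambda^{\alpha+m}_{\mathcal{H}}$, the $m$ factors $A_{\pm i}$ return it to $\Lambda^\alpha_{\mathcal{H}}$ one index unit at a time, with no even-odd dichotomy to worry about.
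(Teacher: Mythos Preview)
Your overall strategy---Theorem~\ref{teoSchau} for the regularity gain, Theorem~\ref{teo151} for the compensating loss---is exactly the paper's one-line proof. The time-Riesz transform $R_m$ and the even-$m$ spatial case go through just as you describe.

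For the odd-$m$ parabolic case you take a detour the paper avoids. In the course of proving Theorem~\ref{teo151}(2), the paper establishes the finer Theorem~\ref{teo15} (via Propositions~\ref{derivadas2} and~\ref{porx}): for every $\alpha>1$, each of $\partial_{x_i}$ and multiplication by $x_i$---and hence each single $A_{\pm i}$---already maps $\Lambda^\alpha_{\mathcal{L}}$ into $\Lambda^{\alpha-1}_{\mathcal{L}}$. So the parabolic scale behaves exactly like the Hermite scale here: after $\LL^{-m/2}f\in\Lambda^{\alpha+m}_{\mathcal{L}}$ one simply peels off the $m$ factors $A_{\pm i}$ one unit at a time, with no parity dichotomy. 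Your base-case argument for $A_{\pm i}\LL^{-1/2}$ via $\int_0^\infty \mathcal{P}_y h\,dy$ and kernel bounds is plausible in outline, but it duplicates machinery the paper has already built and, as written, is only a sketch. The paper's route is cleaner; yours would work but re-derives what is already available.
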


See  \cite{Stempak}, \cite{ST2} and  \cite{thangavelu2} and the references there in for more information about  the hermitian Riesz transforms $A_j \HH^{-1/2}.$

Apart from the above regularity results, our semigroup language allows us to get some maximum principle.

\begin{theorem}\label{Maximump}[Maximum principle] Let $0<\beta <1,\, \alpha > 2\beta$ and  $f\in \Lambda^{\alpha/2,\alpha}_{t,\HH_x}$. Suppose that 
\begin{enumerate}
\item $f(t_0,x_0) = 0$ for some $(t_0,x_0) \in \R^{n+1}$, and
\item $f(t,x) \ge 0$ for $t\le t_0,\, x\in \R^n.$
\end{enumerate}
Then   $\LL^\beta f(t_0,x_0) \le 0$.

Moreover, $\LL^\beta f(t_0,x_0)=0$ if and only if $f(t,x) = 0$ for $t\le t_0$ and $x\in \R^{n}$.
\end{theorem}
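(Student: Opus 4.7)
The plan is to work from the subordination/Balakrishnan representation that defines $\LL^\beta$ for $0<\beta<1$, namely
\begin{equation*}
\LL^\beta f(t_0,x_0)=\frac{\beta}{\Gamma(1-\beta)}\int_0^\infty\frac{f(t_0,x_0)-e^{-y\LL}f(t_0,x_0)}{y^{1+\beta}}\,dy.
\end{equation*}
This representation is available pointwise at $(t_0,x_0)$ because $f\in\Lambda^{\alpha/2,\alpha}_{t,\HH_x}$ with $\alpha>2\beta$ supplies a parabolic H\"older-type bound $|e^{-y\LL}f(t_0,x_0)-f(t_0,x_0)|\le C y^{\alpha/2}$ near $y=0$, while $\|e^{-y\LL}f\|_\infty\le\|f\|_\infty$ controls the tail; the integral thus converges absolutely.

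The key structural fact is that $\partial_t$ and $\HH$ commute, so the heat semigroup factors as $e^{-y\LL}=e^{-y\partial_t}\circ e^{-y\HH_x}$, where $e^{-y\partial_t}$ acts by translation backward in time. Substituting $f(t_0,x_0)=0$ and writing the Hermite heat semigroup through its Mehler kernel $K^{\HH}_y(x,z)$ gives
\begin{equation*}
\LL^\beta f(t_0,x_0)=-\frac{\beta}{\Gamma(1-\beta)}\int_0^\infty\frac{1}{y^{1+\beta}}\left(\int_{\R^n}K^{\HH}_y(x_0,z)\,f(t_0-y,z)\,dz\right)dy.
\end{equation*}
For every $y>0$ one has $t_0-y<t_0$, so hypothesis (2) forces $f(t_0-y,\cdot)\ge 0$ on $\R^n$. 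Combined with the strict positivity $K^{\HH}_y(x_0,z)>0$ of the Mehler kernel and $\beta/\Gamma(1-\beta)>0$, the inner integral is non-negative for every $y$, and hence $\LL^\beta f(t_0,x_0)\le 0$.

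For the equality statement, if $\LL^\beta f(t_0,x_0)=0$ then the non-negative, continuous integrand in $y$ must vanish identically on $(0,\infty)$; strict positivity of $K^{\HH}_y$ forces $f(t_0-y,z)=0$ for almost every $z\in\R^n$ for each $y>0$, and the continuity of $f$ (inherited from the H\"older class) upgrades this to $f(t_0-y,z)=0$ for every $(y,z)\in(0,\infty)\times\R^n$; a last limit $t\to t_0^-$ then extends the vanishing to $t=t_0$. The converse direction follows immediately by plugging $f\equiv 0$ on $t\le t_0$ back into the representation. The main obstacle I anticipate is the clean justification of the pointwise subordination formula at the specific point $(t_0,x_0)$ together with the $y\to 0$ estimate $|e^{-y\LL}f(t_0,x_0)-f(t_0,x_0)|\lesssim y^{\alpha/2}$; this requires bridging the Poisson-based characterization of Theorem~\ref{teo151} with the heat-semigroup estimate, after which the argument reduces to positivity of the Mehler kernel and the backward-in-time monotonicity that the parabolic structure of $\LL$ is designed to exploit.
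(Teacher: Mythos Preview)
Your argument is correct and takes a genuinely different route from the paper's. The paper works directly with its Poisson-semigroup definition
\[
\LL^\beta f=\frac{1}{c_{2\beta}}\int_0^\infty\bigl(e^{-\nu\sqrt{\LL}}-I\bigr)^{[2\beta]+1}f\,\frac{d\nu}{\nu^{1+2\beta}},
\]
and therefore must split into the cases $0<\beta<1/2$ (where $[2\beta]+1=1$ and the positivity of $\mathcal{P}_\nu f(t_0,x_0)$ together with the sign of $c_{2\beta}$ gives the conclusion immediately) and $1/2\le\beta<1$ (where $[2\beta]+1=2$ and one must argue that $\mathcal{P}_{2\nu}f(t_0,x_0)-2\mathcal{P}_\nu f(t_0,x_0)\le 0$). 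Your heat-semigroup Balakrishnan formula avoids this case distinction entirely: the single representation is valid for all $0<\beta<1$, and the sign comes in one stroke from positivity of the Mehler kernel $K^{\HH}_y$ and the backward translation $t_0\mapsto t_0-y$. A further advantage of your approach is that you actually prove the ``moreover'' equality clause via strict positivity of $K^{\HH}_y$ and continuity of $f$; the paper's proof as written stops at the inequality and does not address the equality characterization. The price you pay is exactly the obstacle you flag: since the paper \emph{defines} $\LL^\beta$ through the Poisson semigroup, you must justify that the heat-semigroup formula coincides with it on $\Lambda^{\alpha/2,\alpha}_{t,\HH_x}$ (this follows from Corollary~\ref{formulas} at the spectral level, plus an approximation argument) and supply the small-$y$ bound $|e^{-y\LL}f(t_0,x_0)-f(t_0,x_0)|\lesssim y^{\alpha/2}$, which is a straightforward Mehler-kernel computation parallel to the proof of Proposition~\ref{convergencia2}.
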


The organization of the paper is as follows. In Section \ref{preliminares} we present the mains objects like Poisson semigroup and  fractional powers of operators. We observe that as the operator $\LL$ is not positive, the standard definitions have to be adapted to this complex case. In Section \ref{proofs} we show the coincidence of the spaces $\Lambda_\LL^\alpha$ and $\Lambda_\HH^\alpha$ with some H\"older pointwise spaces defined previously in  \cite{Krylovbook} and \cite{ST3} in the parabolic and Hermite settings. Section \ref{Main}
is devoted to the proof of Theorems \ref{teo151} and \ref{teo16}. Theorems \ref{teoHolder}, \ref{teoSchau}, 
\ref{multiplicador}, \ref{Rieszm} and \ref{Maximump} are proved in Sections \ref{fractional} and \ref{maximum}.
Finally in Section \ref{cuentas} we collect some inequalities needed along the paper.  The non-convolution structure of our operators,  produces  non trivial difficulties and technical computations that we have to solve in each case. This is common to the parabolic case $\LL$ and the Hermite case $\HH$. We present the computations and the results in such a way that the parabolic case includes as particular case the Hermite case. This will be clarified  in the subsections called {\it Elliptic Hermite setting} included at the end the corresponding Sections.

Along this paper, we will use the variable constant convention, in which $C$ denotes a constant that may not be the same in each appearance. The constant will be written with subindexes if we need to emphasize the dependence on some parameters.

\section{Preliminary considerations.}\label{preliminares}

For functions $g\in L^p(\R^n)$, the heat semigroup $e^{-\tau\HH}$ has the pointwise expression
\begin{align*}
e^{-\tau \mathcal{H}}g(x)&=
 \int_{\mathbb{R}^n}\frac{e^{-\frac{|x-z|^2}{4} \coth \tau} e^{-\frac{|x+z|^2}{4} \tanh \tau}}{(2 \pi \sinh 2\tau)^{n/2}}
g(z)\,dz,
\end{align*}
see \cite{Stempak},  \cite{Thangavelu}.
The operator $\partial_t$ in \eqref{oscilador} is taking care of the 
past, in other words its heat semigroup is given by $e^{-\tau\partial_t}\varphi(t) = \varphi(t-\tau).$
Hence for functions $f\in \mathcal{C}_{L^p(\mathbb{R}^n)}^1(\mathbb{R})$ we have $e^{-\tau \LL}f(t,x)  = e^{-\tau \mathcal{H}} \Big(e^{-\tau\partial_t} f(t,\cdot)\Big)(x)$, moreover 
\begin{align}\label{MehlerK2}
e^{-\tau \LL}f(t,x)  &=
e^{-\tau \mathcal{H}}(f(t-\tau,\cdot))(x) =  \int_{\mathbb{R}^n}\frac{e^{-\frac{|x-z|^2}{4} \coth \tau} e^{-\frac{|x+z|^2}{4} \tanh \tau}}{(2 \pi \sinh 2\tau)^{n/2}}
f(t-\tau,z)\,dz. 
\end{align}
The Fourier-Hermite transform of a function $f\in L^1(\R^{n+1})$ can be defined as
\begin{equation}\label{Fourier}
\F(f) (\rho,\mu) = \int_{\R^{n+1}} f(t,x) e^{-i\rho t} h_\mu(x) dt dx, \, \,\rho \in \R, \, \mu\in \N_0^n.
\end{equation}
Where    $h_{\mu}(x) = \prod_{j=1}^n h_{\mu_j}(x_j) \text{,}\ \ \ \text{$x=(x_1,\dots,x_n) \in\R^n$.} $ For  $k\in \N$, $h_k$ is the Hermite function defined by 
\begin{equation*}
h_k(t)= \frac{(-1)^k}{(2^k k! \pi^{1/2})^{1/2}}\, H_k(t)\, e^{-t^2/2}\text{,}\ \ \ \text{$t\in\R$.}
\end{equation*}
 Here
 $H_k$ denotes the Hermite polynomial of degree $k$ (see \cite{Thangavelu}). These functions are eigenvectors of the Hermite operator $\HH$. In fact
$
\HH h_\mu = (2|\mu|+n)\,h_\mu.
$
Consequently  for functions $f\in L^1(\mathbb{R}^{n+1})$ we have 
\begin{equation}\label{transf-heat}
\F(e^{-\tau \LL}f )(\rho,\mu) = e^{-\tau(i\rho+2|\mu |+n)}\F(f)(\rho,\mu),\,  \rho \in \R,\, \mu \in \N^n.
\end{equation}
Given $z \in \C$ with $\Re z \ge 0$, by analytic continuation  it can be seen that 
$$e^{-t \sqrt{z}} = \frac{y}{2\sqrt{\pi}}\int_0^\infty e^{-y^2/4\tau} e^{-\tau z}  \frac{d\tau}{\tau^{3/2}}.$$
Hence for $f\in L^1(\R^{n+1})$ we have 
$$ e^{-y\sqrt{i\rho+2|\mu |+n}}\mathcal{F}(f)(\rho,\mu) = \frac{y}{2\sqrt{\pi}}\int_0^\infty e^{-y^2/4\tau} e^{-\tau i\rho+2|\mu |+n}\mathcal{F}(f)(\rho,\mu)  \frac{d\tau}{\tau^{3/2}}.$$This last expression can  be written as 
$$\F(e^{-y\sqrt{\LL}}f)(\rho, \mu)= \frac{y}{2\sqrt{\pi}}\int_0^\infty e^{-y^2/4\tau} \F(e^{-y \LL} f)(\rho,\mu) \frac{d\tau}{\tau^{3/2}}. $$
The Fourier transform defined in \eqref{Fourier} is an isometry in $L^2(\R^{n+1})$ and in particular 
we have , in the $L^2(\mathbb{R}^{n+1})$ sense
\begin{align}\label{Poisson(n+1)}
 \mathcal{P}_yf(t,x) = e^{-y \sqrt{\LL}}f(t,x) = \frac{y}{2\sqrt{\pi}}\int_0^\infty e^{-y^2/4\tau} e^{-\tau \mathcal{\LL}}f(t,x)  \frac{d\tau}{\tau^{3/2}}. 
\end{align}
For functions $f$ good enough, formulas \eqref{MehlerK2} and \ref{Poisson(n+1)} give  the following pointwise expression  
\begin{align}\label{Poisson(n+1)2}
\mathcal{P}_yf(t,x)= \frac{y}{2\sqrt{\pi}}\int_0^\infty \int_{\mathbb{R}^n} e^{-y^2/4\tau} \frac{e^{-\frac{|x-z|^2}{4} \coth \tau} e^{-\frac{|x+z|^2}{4} \tanh \tau}}{(2 \pi \sinh 2\tau)^{n/2}}
f(t-\tau,z)  \,dz\frac{d\tau}{\tau^{3/2}}, \, x\in \mathbb{R}^n,  \, t \in \R. 
\end{align}
On the other hand  
\begin{equation*} ye^{-y^2/4\tau} \frac{e^{-\frac{|x-z|^2}{4} \coth \tau} e^{-\frac{|x+z|^2}{4} \tanh \tau}}{(2 \pi \sinh 2\tau)^{n/2}\tau^{3/2}}\chi_{\{\tau>0\}}\le C
 \frac{y}{\tau^{1/2}} \frac{e^{-\frac{y^2}{4\tau}}}{\tau} \frac{e^{-\frac{|x-z|^2}{4\tau}}}{\tau^{n/2}}\chi_{\{\tau>0\}} = \Phi_y(\tau,x-z).
 \end{equation*}
 As $\Phi_y$ belongs to $L^1(\R^{n+1})$,   the formula \eqref{Poisson(n+1)2}, defining the {\it Parabolic Poisson Hermite integral} , remains  valid for any  $f\in L^p(\R^{n+1}), \, 1\le p\le \infty, \, \, (x,t) \in \R^{n+1}.$
 Moreover this integral satisfies a {\it Parabolic Hermite Laplace equation} as the following Proposition shows. 
 \begin{proposition}\label{PropEqPoisson}
Assume $f\in L^\infty(\R^{n+1}).$ Then $\mathcal{P}_yf(t,x)$ satisfies the equation 
\begin{equation}\label{EqPoisson}
\partial_y^2 \mathcal{P}_yf(t,x) -\LL\mathcal{P}_yf(t,x) =0, \:\; (x,t) \in \R^{n+1}.
\end{equation}
\end{proposition}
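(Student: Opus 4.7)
The plan is to work directly from the subordination identity \eqref{Poisson(n+1)}, reducing the verification of the equation to two known ingredients: the classical one-dimensional identity
$$\partial_y^2 K(y,\tau) \;=\; \partial_\tau K(y,\tau), \qquad K(y,\tau):=\frac{y}{2\sqrt{\pi}\,\tau^{3/2}}e^{-y^2/4\tau}, \qquad y>0,\ \tau>0,$$
which one checks by a direct two-line computation, and the fact that $u(\tau,t,x):=e^{-\tau\LL}f(t,x)$ is a classical solution of the parabolic Hermite equation $\partial_\tau u + \LL u = 0$ on $(0,\infty)\times\R^{n+1}$. The latter follows from the explicit kernel in \eqref{MehlerK2}: for $\tau>0$ the kernel is smooth in $(\tau,t,x)$ and every derivative $\partial_\tau^j\partial_t^\ell\partial_x^\beta (x^\gamma M_\tau(t,x,z))$ is dominated, locally uniformly in $(\tau,t,x)$, by an integrable function of $z$, so differentiation under the integral sign is permitted and $f\in L^\infty$ is enough to conclude.

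First I would establish the admissibility of all the manipulations. Using the pointwise domination
$$K(y,\tau)\,M_\tau(x,z)\;\le\;\Phi_y(\tau,x-z),\qquad \Phi_y\in L^1(\R^{n+1}),$$
already recorded in the text, together with the analogous bounds one obtains by differentiating $K(y,\tau)$ in $y$ once or twice (the extra factors are polynomial in $y,\tau^{-1}$ and are absorbed by the Gaussian $e^{-y^2/4\tau}$), dominated convergence licenses the passage of $\partial_y$ and $\partial_y^2$ inside the $\tau$-integral in \eqref{Poisson(n+1)} and inside the $z$-integral in \eqref{Poisson(n+1)2}. The same kind of Gaussian-against-polynomial comparison justifies pulling $\partial_t$, $\Delta_x$ and multiplication by $|x|^2$ inside the integral, since the Mehler kernel and all its derivatives decay at least like a Gaussian in $z$ for each fixed $\tau$ in a compact subset of $(0,\infty)$.

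With these interchanges granted, the computation is short. Applying $\partial_y^2$ to \eqref{Poisson(n+1)} and invoking the identity $\partial_y^2 K = \partial_\tau K$ yields
$$\partial_y^2 \mathcal{P}_y f(t,x)\;=\;\int_0^\infty \partial_\tau K(y,\tau)\, e^{-\tau\LL}f(t,x)\,d\tau.$$
An integration by parts in $\tau$ disposes of both boundary terms: at $\tau=\infty$ because $K(y,\tau)\to 0$ while $\|e^{-\tau\LL}f\|_\infty\le \|f\|_\infty$, and at $\tau=0^+$ because the factor $e^{-y^2/4\tau}$ forces $K(y,\tau)\to 0$ for every fixed $y>0$. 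Using $\partial_\tau e^{-\tau\LL}f = -\LL e^{-\tau\LL}f$ and then moving $\LL$ outside the integral (again justified by the same Gaussian bounds together with the fact that $\LL$ commutes with $e^{-\tau\LL}$ on the level of the kernel) gives
$$\partial_y^2\mathcal{P}_y f(t,x)\;=\;\LL\!\int_0^\infty K(y,\tau)\, e^{-\tau\LL}f(t,x)\,d\tau\;=\;\LL\,\mathcal{P}_y f(t,x),$$
which is \eqref{EqPoisson}. The main obstacle is the bookkeeping for the interchanges of differentiation and integration when only $f\in L^\infty$ is assumed; all the rest reduces to the two kernel identities recalled at the start.
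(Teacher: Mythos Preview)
Your proof is correct. It and the paper's argument rest on exactly the same two kernel identities---the heat-type identity $\partial_y^2 K(y,\tau)=\partial_\tau K(y,\tau)$ for the subordination kernel and the parabolic Hermite equation $\partial_\tau e^{-\tau\LL}f=-\LL e^{-\tau\LL}f$---together with Gaussian bounds that justify differentiating under the integral. The only difference is organizational: you keep the two ingredients separate and connect them by an integration by parts in $\tau$, whereas the paper bundles them into the single observation that the full Poisson kernel $K(y,t-\tau)M_{t-\tau}(x,z)$, viewed as a function of $(y,t,x)$, already satisfies $\partial_y^2-\LL=0$, and then simply differentiates under the integral after the change of variables $\tau\mapsto t-\tau$. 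Your packaging is a bit more modular (it transfers verbatim to any generator with a subordination formula), while the paper's is slightly shorter since no integration by parts or boundary-term analysis is needed; but the mathematical content is the same.
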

\begin{proof}
We observe that 
\begin{align*}
\Big|\partial_{y^2} \Big(ye^{-y^2/4\tau}& \frac{e^{-\frac{|x-z|^2}{4} \coth \tau} e^{-\frac{|x+z|^2}{4} \tanh \tau}}{(2 \pi \sinh 2\tau)^{n/2}\tau^{3/2}}\chi_{\{\tau>0\}} \Big)\Big| \\ &+  \Big|\Delta_x \Big(ye^{-y^2/4\tau} \frac{e^{-\frac{|x-z|^2}{4} \coth \tau} e^{-\frac{|x+z|^2}{4} \tanh \tau}}{(2 \pi \sinh 2\tau)^{n/2}\tau^{3/2}}\chi_{\{\tau>0\}} \Big)\Big|\\ & + \Big|\partial_\tau \Big(ye^{-y^2/4\tau} \frac{e^{-\frac{|x-z|^2}{4} \coth \tau} e^{-\frac{|x+z|^2}{4} \tanh \tau}}{(2 \pi \sinh 2\tau)^{n/2}\tau^{3/2}}\chi_{\{\tau>0\}} \Big)\Big|\\& \le
 \frac{C}{\tau} \frac{e^{-\frac{y^2}{4\tau}}}{\tau} \frac{e^{-\frac{|x-z|^2}{4\tau}}}{\tau^{n/2}}\chi_{\{\tau>0\}}.
\end{align*}
Hence, for $y>0$ and $|x-z|>0$, the function $ye^{-y^2/4\tau} \frac{e^{-\frac{|x-z|^2}{4} \coth \tau} e^{-\frac{|x+z|^2}{4} \tanh \tau}}{(2 \pi \sinh 2\tau)^{n/2}\tau^{3/2}}\chi_{\{\tau>0\}}$ is smooth in all its variables. In particular  we can write
 \begin{align*}
\mathcal{P}_yf(t,x)= \frac{y}{2\sqrt{\pi}}\int_{\mathbb{R}}\int_{\mathbb{R}^n} e^{-y^2/4(t-\tau)} \frac{e^{-\frac{|x-z|^2}{4} \coth (t- \tau)} e^{-\frac{|x+z|^2}{4} \tanh(t- \tau)}}{(2 \pi \sinh 2(t-\tau))^{n/2}}
f(\tau,z)  \,dz\,  \chi_{\{t-\tau >0\}}\frac{d\tau}{(t-\tau)^{3/2}}.
\end{align*}
The above estimates also  show that we can interchange the derivatives with the integral for for $y>0$ and $|x-z|>0.$ Hence the Proposition follows since the kernel of this last integral satisfies the equation \eqref{EqPoisson}.
\end{proof}

\begin{remark} The proof of the previous Lemma also shows that for functions $f\in L^\infty(\mathbb{R}^{n+1}) $ we can write

	\begin{align}\label{solution}
	\mathcal{P}_yf(t,x) &= \int_{\mathbb{R}^{n+1}} \mathcal{P}_y(\tau,x,z) f(t-\tau,x-z)  dz d\tau 
\\  \nonumber & = \frac{y}{2\sqrt{\pi}}\int_{\mathbb{R}}\int_{\mathbb{R}^n}\frac{e^{-\frac{|z|^2}{4}\coth\tau}e^{-\frac{|2x-z|^2}{4}\tanh\tau}e^{-\frac{y^2}{4\tau}}}{(2\pi\sinh(2\tau))^{n/2}\tau^{3/2}}f(t-\tau,x-z)dz\, \chi_{\{\tau >0\}}\, d\tau.
	\end{align}
\end{remark}

\

As we have noticed in \eqref{transf-heat}, the infinitesimal generator, $\LL$, of the
semigroup $e^{-\tau\LL}$ is not positive. This forced us to use some complex variable technique in order to give a sense to the powers of the operator $\LL$.  Given a non necessarily  positive operator $\mathbb{L}$, formulas to define $\mathbb{L}^{\pm \alpha}$, where $0<\alpha <1$, were considered in   \cite{Bernardis},  \cite{ST}  and \cite{ST3}.

Given $0< \beta$ , we recall the following two integrals related with the Gamma function:
\begin{equation}\label{defi1}
C_\beta = \int_0^\infty e^{-t} t^\beta \,\frac{dt}{t}, \quad  
c_\beta = \int_0^\infty \big(e^{-t}-1\big)^{[\beta]+1}\,\frac{dt}{ t^{1+\beta}}.
\end{equation}
It is well known that $C_\beta = \Gamma(\beta) $ for all $0<\beta$ and $c_\beta = \Gamma(-\beta)$ for   $0< \beta <1$.  The following Lemma was proved in \cite{Bernardis}.
 
\begin{lemma}\label{lem:gamma function}
Let $0<\beta<1$ and $-\pi/2\leq \varphi_0\leq\pi/2$. Consider the ray
in the complex plane $\mathrm{ray}_{\varphi_0}:=\{z=re^{i\varphi_0}:0<r<\infty\}$. Then
\begin{equation*}
\Gamma (\beta) = \int_{ {\rm ray}_{\varphi_0} } e^{-z} z^\beta \,\frac{dz}{z},\quad  \hbox{and} \quad 
\Gamma(-\beta) = \int_{{\rm ray}_{\varphi_0}} ( e^{-z}-1)\,\frac{dz}{z^{1+\beta}}.
\end{equation*}
\end{lemma}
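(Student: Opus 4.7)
The plan is to prove both identities by contour deformation. Recall that on the positive real axis we have the classical representations
$$\Gamma(\beta)=\int_0^\infty e^{-t}t^{\beta-1}\,dt,\qquad \Gamma(-\beta)=\int_0^\infty(e^{-t}-1)t^{-\beta-1}\,dt,$$
the second being valid precisely because $0<\beta<1$ (which ensures integrability both at $0$, where $e^{-t}-1\sim -t$, and at infinity). Since the ray $\mathrm{ray}_{\varphi_0}$ lies in the closed right half plane, we can rotate the contour from $(0,\infty)$ to $\mathrm{ray}_{\varphi_0}$ without crossing the branch cut of $z\mapsto z^{\beta}$ chosen along the negative real axis (with principal branch $z^\beta=e^{\beta\log z}$).

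First I would treat the case $\Gamma(\beta)$. Let $f(z)=e^{-z}z^{\beta-1}$, which is holomorphic on $\mathbb{C}\setminus(-\infty,0]$, and consider, for $0<\varepsilon<R$, the closed contour $\Gamma_{\varepsilon,R}$ consisting of: the segment from $\varepsilon$ to $R$ on the real axis; the arc $C_R=\{Re^{i\theta}:0\le\theta\le\varphi_0\}$ (or reversed if $\varphi_0<0$); the segment on $\mathrm{ray}_{\varphi_0}$ from $Re^{i\varphi_0}$ back to $\varepsilon e^{i\varphi_0}$; and the small arc $C_\varepsilon=\{\varepsilon e^{i\theta}:0\le\theta\le\varphi_0\}$ closing up. By Cauchy's theorem $\int_{\Gamma_{\varepsilon,R}}f(z)\,dz=0$. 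On $C_\varepsilon$ one has $|f(z)|\le C\varepsilon^{\beta-1}$ so the contribution is $O(\varepsilon^{\beta})\to0$. On $C_R$ the modulus is $|f(z)||dz|\le R^{\beta}e^{-R\cos\theta}\,d\theta$; for $|\varphi_0|<\pi/2$ this is immediately $O(R^\beta e^{-cR})\to0$, while for $\varphi_0=\pm\pi/2$ the standard Jordan-type estimate $\cos\theta\ge 1-\frac{2\theta}{\pi}$ near $\pi/2$ together with the factor $R^\beta$ (and $\beta<1$) still yields $0$ in the limit. Letting $\varepsilon\to0$, $R\to\infty$, the real-axis piece gives $\Gamma(\beta)$ and the piece along $\mathrm{ray}_{\varphi_0}$ gives (with orientation flipped) $\int_{\mathrm{ray}_{\varphi_0}}e^{-z}z^\beta\,dz/z$, which yields the first identity.

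For $\Gamma(-\beta)$ I would apply the same rotation argument to $g(z)=(e^{-z}-1)z^{-\beta-1}$. The only thing to re-verify is the vanishing of the arc integrals. Near $0$, $|e^{-z}-1|\le C|z|$, so $|g(z)|\le C|z|^{-\beta}$ and the small-arc contribution is $O(\varepsilon^{1-\beta})\to 0$ using $\beta<1$. On the large arc $C_R$, split $g=e^{-z}z^{-\beta-1}-z^{-\beta-1}$: the first term is handled by the same Jordan argument as above (now with exponent $-\beta-1$, which only helps), and the second integrates explicitly—$\int_{C_R}z^{-\beta-1}dz$ equals a boundary term of size $R^{-\beta}\to0$. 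Cauchy's theorem again produces the desired identity.

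The main obstacle is the endpoint $\varphi_0=\pm\pi/2$, where the exponential factor $e^{-z}$ does not decay on the imaginary axis and the crude bound $|e^{-z}|\le 1$ is not enough to kill $R^\beta$. Handling this requires the Jordan-lemma type estimate $\cos\theta\ge 1-2\theta/\pi$ for $0\le\theta\le\pi/2$, giving $\int_0^{\pi/2}e^{-R\cos\theta}d\theta=O(1/R)$, so that $R^{\beta}\cdot O(1/R)\to 0$ since $\beta<1$. Everything else is standard and, once the arc estimates are in place, the two formulas follow immediately from Cauchy's theorem.
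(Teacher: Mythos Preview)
Your argument is correct and follows exactly the route the paper indicates: the paper does not give a full proof but refers to \cite{Bernardis} and sketches that ``the proof of the Lemma is based in the Cauchy Integral Theorem applied to the functions $F(z)=e^{-z}z^{\beta-1}$ and $G(z)=(e^{-z}-1)/z^{1+\beta}$,'' which is precisely the contour deformation you carry out. Your treatment of the small and large arcs, including the Jordan-type estimate at the endpoint $\varphi_0=\pm\pi/2$ (where the ray integral is only conditionally convergent and one needs $\beta<1$ to make $R^{\beta}\cdot O(1/R)\to 0$), supplies the details the paper omits.
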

For $0<\beta <1$, the absolutely convergent integrals in \eqref{defi1} can be interpreted  as integrals of the functions $F(t) = e^{-t} t^{\beta-1}$ and $G(t) = (e^{-t}-1)/ t^{1+\beta}$
along the ``complex''  path $\{z=t:  0<t< \infty\}$.
The proof of the Lemma is based in the Cauchy Integral Theorem applied to the functions $F(z) = e^{-z} z^{\beta-1}$ and $G(z) = (e^{-z}-1)/ z^{1+\beta}$. Both functions are analytic for $z \neq 0$.   For the integrals defined in  \eqref{defi1} we could state a parallel Lemma to \ref{lem:gamma function}, by choosing  $H(z) = (e^{-z}-1)^{[\beta]+}/ z^{1+\beta}$. The proof follows the same steps.  We leave the details to the reader. We have the  following Corollary.

\begin{corollary}\label{formulas} Let $ \beta >0 $ and $\lambda$ a complex number with $\Re \lambda \ge 0$.  Then
\begin{equation*}
\lambda^{-\beta} = \frac1{\Gamma(\beta)} \int_0^\infty e^{\lambda t }t^{\beta}\,\frac{dt}{t},\quad
\hbox{and}\quad \lambda^{\beta} = \frac1{c_\beta} \int_0^\infty
(e^{\lambda t}-1)^{[\beta]+1}\,\frac{dt}{t^{1+\beta}}.\end{equation*}
\end{corollary}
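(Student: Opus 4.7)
The plan is to derive both formulas by rotating the contour of integration in the ray representations of $\Gamma(\beta)$ and $c_\beta$ supplied by Lemma \ref{lem:gamma function}. Write $\lambda = r e^{i\varphi}$ with $r>0$ and $\varphi = \arg\lambda \in [-\pi/2,\pi/2]$; the substitution $z = \lambda t$ sends the positive real $t$-axis onto $\mathrm{ray}_{\varphi}$, and $(\lambda t)^\beta = \lambda^\beta t^\beta$ holds with the principal branch since $\varphi$ lies in the closed right half-plane, which avoids the branch cut.

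First I would make good on the remark preceding the corollary and extend Lemma \ref{lem:gamma function} for arbitrary $\beta>0$, namely
\[
c_\beta = \int_{\mathrm{ray}_{\varphi_0}} (e^{-z}-1)^{[\beta]+1}\,\frac{dz}{z^{1+\beta}},\qquad -\tfrac{\pi}{2}\le\varphi_0\le\tfrac{\pi}{2}.
\]
Applying Cauchy's Integral Theorem to $H(z) = (e^{-z}-1)^{[\beta]+1}/z^{1+\beta}$ on the sector between $\mathrm{ray}_0$ and $\mathrm{ray}_{\varphi_0}$ truncated by arcs of radii $\epsilon$ and $R$, the inner-arc contribution is $O(\epsilon)$ since $(e^{-z}-1)^{[\beta]+1}=O(|z|^{[\beta]+1})$ near $0$, and the outer-arc contribution is $O(R^{-\beta})$ because $(e^{-z}-1)^{[\beta]+1}$ is bounded on $\{\Re z\ge 0\}$ while $|z|^{-1-\beta}$ decays. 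Sending $\epsilon \to 0$ and $R\to\infty$ identifies the two ray integrals.

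With the extended lemma in hand, I substitute $z=\lambda t$ (with $\varphi=\arg\lambda$) into the two ray representations and read off
\[
\Gamma(\beta) = \int_0^\infty e^{-\lambda t}(\lambda t)^\beta\,\frac{\lambda\,dt}{\lambda t} = \lambda^\beta\int_0^\infty e^{-\lambda t} t^\beta\,\frac{dt}{t},
\]
\[
c_\beta = \int_0^\infty (e^{-\lambda t}-1)^{[\beta]+1}\,\frac{\lambda\,dt}{(\lambda t)^{1+\beta}} = \lambda^{-\beta}\int_0^\infty (e^{-\lambda t}-1)^{[\beta]+1}\,\frac{dt}{t^{1+\beta}},
\]
which are the stated identities (read with $e^{-\lambda t}$; the $e^{\lambda t}$ appearing in the corollary is evidently a sign typo, since otherwise the integrands are not summable for $\Re\lambda>0$).

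The main obstacle is the boundary case $\Re\lambda = 0$, in which $z=\lambda t$ traces the imaginary axis and the resulting $t$-integrals are only conditionally convergent. A direct change of variables on $\mathrm{ray}_0$ would therefore be delicate and would force an extension by analytic continuation from $\Re\lambda>0$. The preparatory lemma bypasses this entirely, because it is established once and for all as a contour identity: Cauchy's theorem, combined with Jordan-type exponential decay of $e^{-z}$ on $\{\Re z\ge 0\}$ in the first identity and the polynomial decay of $|z|^{-1-\beta}$ in the second, legitimizes the ray integrals for every $\varphi_0\in[-\pi/2,\pi/2]$, including the endpoints. Once the ray identity is in place, the substitution $z=\lambda t$ is a purely formal parametrization of $\mathrm{ray}_\varphi$ and no further analytical subtlety remains.
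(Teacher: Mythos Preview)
Your proposal is correct and follows exactly the approach the paper intends: the corollary is presented there without explicit proof, as an immediate consequence of Lemma~\ref{lem:gamma function} (and its extension to general $\beta>0$ via $H(z)=(e^{-z}-1)^{[\beta]+1}/z^{1+\beta}$, which the paper sketches just before the corollary) under the substitution $z=\lambda t$. Your identification of the sign typo $e^{\lambda t}\to e^{-\lambda t}$ is also correct.
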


We use the last  Corollary to define 
define the negative and positive fractional powers of the operator $\LL$ as 
\begin{equation*}
\mathcal{L}^\beta f(t,x)=\frac{1}{c_{2\beta}}\INT\left( e^{-\tau \mathcal{L}^{1/2}}-I\right)^{[2\beta]+1}f(t,x) \frac{d\tau}{\tau^{1+2\beta}},
\end{equation*}
where $c_{2\beta}=\INT\left( e^{-\tau}-1\right)^{[2\beta]+1} \frac{d\tau}{\tau^{1+2\beta}}$.
Also, for $\beta>0$, 
$$
\mathcal{L}^{-\beta}f(t,x)=\frac{1}{\Gamma(2\beta)}\INT e^{-\tau \mathcal{L}^{1/2}}f(t,x)\frac{d\tau}{\tau^{1-2\beta}}.
$$

Observe that for good enough functions
\begin{eqnarray*}
\mathcal{F}(\LL^{\pm \beta} f)(\rho,\mu) = (i\rho+2\mu+n)^{\pm \beta} \mathcal{F}( f)(\rho,\mu), \, \rho \in \mathbb{R}, \hbox{  and  } \mu \in \mathbb{N}^n.
\end{eqnarray*}

\subsection{Elliptic Hermite setting} \label{EllipticSeting}

\

Given  $g\in L^\infty(\R^n)$, consider the function 
 $f(t,x)= g(x)$, then formula \eqref{Poisson(n+1)2}
  becomes 
  \begin{align}\label{Poissonn}
\mathcal{P}_yf(t,x)= \frac{y}{2\sqrt{\pi}}\int_0^\infty \int_{\mathbb{R}^n} e^{-y^2/4\tau} \frac{e^{-\frac{|x-z|^2}{4} \coth \tau} e^{-\frac{|x+z|^2}{4} \tanh \tau}}{(2 \pi \sinh 2\tau)^{n/2}}
g(z)  \,dz\frac{d\tau}{\tau^{3/2}} = P_yg(x).
\end{align}
Where  $P_yg(x)$ is  the Poisson semigroup associated to the operator $\HH= -\Delta_x+|x|^2.$ The thoughts developed along this section show that: 
\begin{itemize}
\item For functions $g \in L^\infty(\mathbb{R}^n)$, $P_yg(x)$ satisfies the equation  $\displaystyle \partial_y^2 P_yg(x) - \HH P_y g(x) =0, \, x \in \R^{n+1}.$
\item Identities \eqref{solution} and \eqref{Poissonn} give
 that $\displaystyle\int_{\R}\mathcal{P}_y(\tau,x,z)d\tau =P_y(x,z)$, for all $x,z\in\R^n$, where $\mathcal{P}_y$ is the Poisson kernel associated to $\mathcal{L}$ and $P_y$ is the Poisson kernel associated to the harmonic oscillator, $\mathcal{H}$.
\item Let $\beta >0$, for $g$ good enough, \begin{equation*}
\mathcal{H}^\beta g(x)=\frac{1}{c_{2\beta}}\int\left( e^{-\tau \mathcal{H}^{1/2}}-I_d\right)^{[2\beta]+1}g(x) \frac{d\tau}{\tau^{1+2\beta}},
\end{equation*}
is well defined and $\widehat{\mathcal{H}^\beta g}(\mu) = (2|\mu|+n)^{\beta} \hat{g}(\mu), \, \mu \in \mathbb{N}^n$, with $\hat{ g}(\mu) = \int_{\R^n} g(x) h_\mu(x) dx.$
\item Let $\beta >0$, for  good enough functions $g$,   $$
\mathcal{H}^{-\beta}g(x)=\frac{1}{\Gamma(2\beta)}\INT e^{-\tau \mathcal{L}^{1/2}}g(x)\frac{d\tau}{\tau^{1-2\beta}}
$$
is well defined and $\widehat{\mathcal{H}^{-\beta} g}(\mu) = (2|\mu|+n)^{-\beta} \hat{g}(\mu), \, \mu \in \mathbb{N}^n.$

\end{itemize}

\section{Coincidence of Parabolic Hermite-Zygmund   with Parabolic Hermite-H\"older spaces.}\label{proofs}

 We shall begin by recalling the following definition, it can be found in \cite{ST2}.

\begin{definition}\label{DefPablo}[Hermite H\"older spaces] Let $0<\alpha < 1$. We consider the space of functions  
	$$C_{\mathcal{H}}^{\alpha}(\R^n)= \{ f: (1+|\cdot|)^\alpha f(\cdot) \in L^\infty(\R^{n}),
	\hbox{ and }  \|f(\cdot+z) - f(\cdot)\|_{L^\infty(\R^n) } \le A |z|^\alpha\}$$
	with   associated norm
	$$
	\|f\|_{C_{\mathcal{H}}^{\alpha}}=[f]_{M^\alpha}+[f]_{C_{\mathcal{H}}^{\alpha}}.
	$$
	Where $[f]_{M^\alpha}=\|(1+|\cdot|)^\alpha f(\cdot)\|_\infty$ and $\displaystyle [f]_{C_{\mathcal{H}}^{\alpha}}=\sup_{|z|>0}\frac{\|f(\cdot+z)-f(\cdot)\|_\infty}{|z|^\alpha}$.
	
	\noindent For $\alpha >1$ and not integer, we say that  $f\in C_{\mathcal{H}}^{ \alpha}(\R^n)$, if  there exist the derivatives of order $[\alpha]$   and  the norm
	$$
	\|f\|_{ C_{\mathcal{H}}^{\alpha}}:=[f]_{M^{\alpha-[\alpha]}}+\displaystyle\sum_{\substack{1\le |i_1|,\dots,|i_m|\le n\\1\le m\le [\alpha]}}[A_{i_1}\dots A_{i_m}f]_{M^{\alpha- [\alpha]}}+\sum_{1\le|i_1|,\dots,|i_[\alpha]|\le n}[A_{i_1}\dots A_{i_{[\alpha]}}f]_{C^{\alpha-[\alpha]}_{\mathcal{H}}},
	$$
	is finite.  
\end{definition}
Some parabolic H\"older spaces were considered by N. Krylov, see \cite{Krylovbook}. Namely
\begin{itemize}
\item[(i)] Let $0< \alpha <1$, $C^{\alpha/2,\alpha}$ was   defined as   the set of bounded functions such that
 $$	[f]_{C^{\alpha/2,\alpha}} = \sup_{(\tau,z)\neq (0,0)} 
	 \frac{\|f(\cdot-\tau,\cdot-z)-f(\cdot,\cdot)\|_{L^\infty(\R^{n+1})}}{(|\tau|^{1/2}+|z|)^{\alpha}} < \infty.$$ 
\item[(ii)] For $1<\alpha<2$,  $f\in C^{\alpha/2,\alpha}$ if $\partial_{x_i} f\in C^{\alpha/2-1/2,\alpha-1}$  and $f(\cdot,x)\in C^{\alpha/2}(\R)$ uniformly on $x.$
\item[(iii)] Let  $0< \alpha <1$, $C^{1+ \alpha/2,2+\alpha}$ if
$\partial^2_{x_i} f$ and $\partial_t f$ belong to $ C^{\alpha/2,\alpha}.$
\end{itemize}
These Krylov's definitions together with Definition \ref{DefPablo}  drive us to consider the following definition.\begin{definition}\label{ParabolicHermite} [Parabolic Hermite H\"older spaces]
	
	\begin{itemize}
		\item 	Let $0<\alpha<1$. We say that $f\in C_{t,\mathcal{H}}^{\alpha/2,\alpha}$ if $f\in C^{\alpha/2,\alpha}$ and 
	$$
	[f]_{M^\alpha}=\displaystyle\sup_{(t,x)\in\mathbb{R}^{n+1}}(1+|x|)^{\alpha}|f(t,x)|<\infty,
	$$
 In this case, $\|f\|_{C_{t,\mathcal{H}}^{\alpha/2,\alpha}}=[f]_{\mathcal{M}^\alpha}+[f]_{C_{t,\mathcal{H}}^{\alpha/2,\alpha}}$.
 
 \item For $1<\alpha<2$,  $f\in C_{t,\mathcal{H}}^{\alpha/2,\alpha}$ if $A_{\pm i}f\in C_{t,\mathcal{H}}^{\alpha/2-1/2,\alpha-1}$  and $f(\cdot,x)\in C^{\alpha/2}(\R)$ uniformly on $x$.

\item For $2< \alpha<3$  we say that a function $f\in  C_{t,\mathcal{H}}^{\alpha/2,\alpha}$,  if the functions $A_{\pm i}A_{\pm j} f$  and the function $\partial_t f$ belong to $ C^{\alpha/2-1,\alpha-2}_{t,\mathcal{H}}.$   \end{itemize}

	\end{definition}

In the next result we will show that the functions in $C_{t,\mathcal{H}}^{\alpha/2,\alpha}$, {$0<\alpha<1$}, can be taken to be continuous, so the inequality  $|f(t-\tau,x+z)-f(t,x)|\leq C(\tau^{1/2}+|z|)^\alpha$ holds for every $x\in\R^n$, $t\in\R.$

\begin{proposition}\label{convergencia2}
	{For $0<\alpha<1$,} every $f\in C_{t,\mathcal{H}}^{\alpha/2,\alpha}(\R^{n+1})$ can be modified on a set of measure zero so that it becomes continuous.
\end{proposition}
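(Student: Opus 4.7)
The plan is to regularize $f$ by convolution with a parabolic mollifier in the $(t,x)$ variables and to exploit the H\"older hypothesis to show that the regularization converges uniformly to a continuous function that equals $f$ almost everywhere.

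First I fix $\varphi\in C_c^\infty(\R^{n+1})$, nonnegative, with $\int\varphi=1$ and supported in the unit ball, and I define, using the parabolic scaling natural to $\LL$,
$$\varphi_\epsilon(\tau,z):=\epsilon^{-n-2}\varphi(\tau/\epsilon^2,\,z/\epsilon),\qquad f_\epsilon(t,x):=(f*\varphi_\epsilon)(t,x).$$
Since $f\in L^\infty(\R^{n+1})$, each $f_\epsilon$ belongs to $C^\infty(\R^{n+1})\cap L^\infty(\R^{n+1})$; in particular every $f_\epsilon$ is continuous.

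The hypothesis $f\in C^{\alpha/2,\alpha}$ means $\|f(\cdot-\tau,\cdot-z)-f(\cdot,\cdot)\|_{L^\infty}\le C(|\tau|^{1/2}+|z|)^{\alpha}$ for every $(\tau,z)$; for each such pair the pointwise estimate holds off a null set depending on $(\tau,z)$. Applying Fubini to the measurable set
$$E:=\bigl\{((\tau,z),(t,x))\in\R^{2(n+1)}:\,|f(t-\tau,x-z)-f(t,x)|>C(|\tau|^{1/2}+|z|)^{\alpha}\bigr\}$$
produces a null set $N\subset\R^{n+1}$ such that, for every $(t,x)\notin N$, the pointwise H\"older inequality holds for almost every $(\tau,z)$. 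Integrating against $\varphi_\epsilon$ then gives, for every $(t,x)\notin N$,
$$|f_\epsilon(t,x)-f(t,x)|\le C\int_{\R^{n+1}}(|\tau|^{1/2}+|z|)^{\alpha}\varphi_\epsilon(\tau,z)\,d\tau\,dz\le C'\epsilon^{\alpha}.$$

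Consequently $|f_{\epsilon_1}(t,x)-f_{\epsilon_2}(t,x)|\le C'(\epsilon_1^{\alpha}+\epsilon_2^{\alpha})$ on $N^c$; since each $f_\epsilon$ is continuous and $N^c$ is dense in $\R^{n+1}$, this bound extends by continuity to every $(t,x)\in\R^{n+1}$. Hence $\{f_\epsilon\}_{\epsilon>0}$ is uniformly Cauchy and converges uniformly to a function $\tilde f\in C(\R^{n+1})\cap L^\infty(\R^{n+1})$; letting $\epsilon\to 0$ in the pointwise inequality on $N^c$ yields $\tilde f=f$ off the null set $N$, providing the desired continuous modification. The one delicate point is this a.e.-to-everywhere passage, and it works precisely because the mollifications $f_\epsilon$ are already continuous and the full-measure set $N^c$ is dense in $\R^{n+1}$.
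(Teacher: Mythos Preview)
Your argument is correct and takes a genuinely different route from the paper's. The paper regularizes with the Poisson semigroup $\mathcal{P}_y$ itself rather than a generic mollifier: it writes $\mathcal{P}_y f - f$ as the sum of $\int \mathcal{P}_y(\tau,x,z)\bigl(f(t-\tau,x-z)-f(t,x)\bigr)\,d\tau\,dz$ (controlled via the H\"older hypothesis and the kernel bounds of Lemma~\ref{lemma5B}) and the ``mass defect'' term $f(t,x)\bigl(e^{-y\sqrt{\LL}}1(t,x)-1\bigr)$ (controlled via Lemma~\ref{'lemma4'}(3), which requires the decay hypothesis $(1+|x|)^\alpha f\in L^\infty$). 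Your compactly supported mollifier has total mass one, so this second term never appears and you use only the H\"older part of the definition of $C_{t,\HH}^{\alpha/2,\alpha}$; in that sense your argument is more elementary and even slightly sharper. The price is that you need the Fubini/density step to pass from an a.e.\ bound to an everywhere bound, whereas the paper's kernel is already integrated pointwise. What the paper's approach buys is a byproduct used repeatedly afterwards: the quantitative estimate $\|\mathcal{P}_y f-f\|_\infty\le C\|f\|_{C_{t,\HH}^{\alpha/2,\alpha}}\,y^\alpha$, which feeds directly into the proofs of Theorems~\ref{caracHolder2} and~\ref{teo151}.
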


\begin{proof}
	Let $f\in C_{t,\mathcal{H}}^{\alpha/2,\alpha}(\R^{n+1})$. We will follow the ideas in  Stein \cite[page 142]{Stein}. By the hypothesis on $f$,  Lemma \ref{lemma5B} (i) and Lemma \ref {'lemma4'} $(3)$   we have
	\begin{multline*}
	|\mathcal{P}_yf(t,x)-f(t,x)| \\ \le  \left|\int_{\R^{n+1}} \mathcal{P}_y(\tau,x,z)(f(t-\tau,x-z)-f(t,x))d\tau dz\right| + \Bigg|f(t,x)\left(\int_{\R^{n+1}}\mathcal{P}_y(\tau,x,z)d\tau dz-1\right) \Bigg|\\ \le
[f]_{C_{t,\mathcal{H}}^{\alpha/2,\alpha}} \Big(\int_{\R^n}\int_0^\infty\frac{ye^{-\frac{y^2+|z|^2}{c\tau}}(\tau^{1/2}+|z|)^\alpha}{\tau^{\frac{n+3}{2}}}d\tau dz \Big)	  + \|f\|_\infty  \bigg|e^{-y \sqrt{ \LL}}1(t,x)-1\bigg|  \le
	C\| f\|_{C_{t,\mathcal{H}}^{\alpha/2,\alpha}} y^\alpha.
	\end{multline*}
	In particular, 
	we conclude that $\mathcal{P}_yf$ converges uniformly to $f$ as $y$ goes to zero.  As $\mathcal{P}_y f$ is continuous,   $f$ can be taken to be continuous.
\end{proof}
Now we shall show that, for $0<\alpha <1$, the pointwise Definition \ref{ParabolicHermite} is equivalent to the 
Definition \ref{ZygmundPar} given by using  of Poisson semigroup.

\begin{theorem}\label{caracHolder2} [$0<\alpha <1$, Parabolic Hermite-H\"older $=$ Parabolic Hermite-Zygmund]
	Let $0<\alpha<1$.  Then $$ C_{t,\mathcal{H}}^{\alpha/2,\alpha} =  {	\Lambda^\alpha_{\mathcal{L}}} ,$$ with equivalence of norms.
\end{theorem}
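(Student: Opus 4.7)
The plan is to derive $C_{t,\HH}^{\alpha/2,\alpha}=\Lambda^{\alpha}_{\LL}$ by combining the characterization already provided by Theorem~\ref{teo151}(1) with the classical Zygmund-to-H\"older passage valid in the strictly subunitary range $0<\alpha<1$. Theorem~\ref{teo151}(1) asserts that $f\in\Lambda^{\alpha}_{\LL}$ exactly when $(1+|x|)^{\alpha}f\in L^{\infty}(\R^{n+1})$ and the symmetric second-difference estimate
$$
\|f(\cdot-\tau,\cdot-z)+f(\cdot-\tau,\cdot+z)-2f(\cdot,\cdot)\|_{L^{\infty}(\R^{n+1})}\le K(|\tau|^{1/2}+|z|)^{\alpha}
$$
holds, while Definition~\ref{ParabolicHermite}(i) describes $C_{t,\HH}^{\alpha/2,\alpha}$ by the same weighted bound together with the first-difference parabolic H\"older estimate
$$
\|f(\cdot-\tau,\cdot-z)-f(\cdot,\cdot)\|_{L^{\infty}(\R^{n+1})}\le C'(|\tau|^{1/2}+|z|)^{\alpha}.
$$
Since the seminorm $[f]_{M^{\alpha}}$ enters both norms with an identical definition, the proof reduces to showing the equivalence of these two seminorms on bounded functions.

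The implication first-difference $\Rightarrow$ second-difference is immediate by the triangle inequality. For the converse I would decouple the two variables. Specialising $\tau=0$ in the second-difference inequality yields the purely spatial Zygmund-type condition
$$
\|f(\cdot,\cdot-z)+f(\cdot,\cdot+z)-2f(\cdot,\cdot)\|_{L^{\infty}(\R^{n+1})}\le K|z|^{\alpha};
$$
by the classical Zygmund-to-H\"older theorem on $\R^{n}$, valid for $0<\alpha<1$ (see \cite{Zygmund} and \cite[Chapter~V]{Stein}), this upgrades to $\|f(\cdot,\cdot+z)-f(\cdot,\cdot)\|_{\infty}\le C|z|^{\alpha}$, uniformly in $t$. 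Specialising instead $z=0$ in the second-difference estimate gives $2|f(t-\tau,x)-f(t,x)|\le K|\tau|^{\alpha/2}$, that is, $\alpha/2$-H\"older continuity in time uniform in $x$. Combining through the triangle inequality
$$
|f(t-\tau,x-z)-f(t,x)|\le|f(t-\tau,x-z)-f(t,x-z)|+|f(t,x-z)-f(t,x)|
$$
together with the elementary bound $|\tau|^{\alpha/2}+|z|^{\alpha}\le 2(|\tau|^{1/2}+|z|)^{\alpha}$, valid for $0<\alpha<1$, produces the first-difference parabolic H\"older bound with a constant comparable to $K$.

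Tracking these constants then yields $\|f\|_{\Lambda^{\alpha}_{\LL}}\sim\|f\|_{C_{t,\HH}^{\alpha/2,\alpha}}$, which is the equivalence of norms claimed in the theorem. The only non-formal ingredient is the spatial Zygmund-to-H\"older implication; in the range $0<\alpha<1$ this is classical, and it is precisely why the statement requires $\alpha<1$, the endpoint $\alpha=1$ being a genuine obstruction corresponding to the Zygmund class itself. No additional parabolic difficulty arises, because the joint second-difference estimate factorises cleanly into its spatial and temporal components, so the principal technical step is simply the invocation of the classical lifting of symmetric-difference regularity to ordinary H\"older regularity.
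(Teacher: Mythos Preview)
Your argument is internally sound but \emph{circular within the paper's logical structure}. You invoke Theorem~\ref{teo151}(1) as an already-available characterization, but if you look at how that theorem is proved (Section~\ref{Main}), the converse implication ``$f\in\Lambda^{\alpha}_{\LL}\Rightarrow$ second-difference estimate'' for $0<\alpha<1$ is obtained precisely by quoting Theorem~\ref{caracHolder2}: the paper writes ``If $f\in\Lambda^{\alpha}_{\mathcal{L}}$ with $\alpha<1$, the result is a consequence of Theorem~\ref{caracHolder2}.'' So the inclusion $\Lambda^{\alpha}_{\LL}\subset C_{t,\HH}^{\alpha/2,\alpha}$ in your scheme rests on the very statement you are trying to prove. (The forward direction of Theorem~\ref{teo151}(1) is independent of Theorem~\ref{caracHolder2}, so your proof of $C_{t,\HH}^{\alpha/2,\alpha}\subset\Lambda^{\alpha}_{\LL}$ via ``first difference $\Rightarrow$ second difference $\Rightarrow$ Poisson decay'' is a legitimate, if slightly roundabout, alternative to the paper's direct estimate.)

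The paper avoids this loop by proving Theorem~\ref{caracHolder2} \emph{first} and directly. For $f\in C_{t,\HH}^{\alpha/2,\alpha}$ it writes $y\partial_y\mathcal{P}_y f$ as an integral against $y\partial_y\mathcal{P}_y(\tau,x,z)$, subtracts $f(t,x)$ inside, and uses the kernel bounds of Lemma~\ref{lemma5B} and Lemma~\ref{'lemma4'}(3) to get $\|y\partial_y\mathcal{P}_y f\|_\infty\le C y^{\alpha}$. For the converse it splits $f(t+\tau,x+z)-f(t,x)$ through $\mathcal{P}_y f$ with $y=|\tau|^{1/2}+|z|$, controls $\|f-\mathcal{P}_y f\|_\infty$ by integrating $\partial_{y'}\mathcal{P}_{y'}f$, and bounds $|\mathcal{P}_y f(t+\tau,x+z)-\mathcal{P}_y f(t,x)|$ via the gradient estimates $|\partial_{x_i}\mathcal{P}_y f|\le Cy^{-1+\alpha}$ and $|\partial_t\mathcal{P}_y f|\le Cy^{-2+\alpha}$ coming from Lemma~\ref{est2}; the weighted bound $(1+|x|)^{\alpha}f\in L^{\infty}$ is obtained separately. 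In other words, the paper carries out the Zygmund-to-H\"older lifting \emph{with the parabolic Hermite Poisson semigroup itself}, rather than importing the Euclidean version as a black box. Your idea would work if Theorem~\ref{teo151}(1) were established independently, but as the paper is organized, Theorem~\ref{caracHolder2} is the foundational step and must be proved from the kernel estimates.
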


\begin{proof}
	For  $f\in C_{t,\mathcal{H}}^{\alpha/2,\alpha}(\R^{n+1})$, we write
\begin{multline*}
	y\partial_y \mathcal{P}_y f(t,x)=\int_{\mathbb{R}^{n+1}}y\partial_y \mathcal{P}_y(\tau,x,z)(f(t-\tau,x-z)-f(t,x))d\tau dz\\ +f(t,x)\int_{\mathbb{R}^{n+1}}y\partial_y \mathcal{P}_y(\tau,x,z)d\tau dz=I_1+I_2.
\end{multline*}
	By   Lemma \ref{lemma5B} (i)  we have 
	\begin{align*}
	|I_1|&\leq \int_{\mathbb{R}^{n+1}} |y\partial_y \mathcal{P}_y(\tau,x,z)|
	|f(t-\tau,x-z)-f(\tau,x)|dz 
\\&	\le C\|f\|_{C^{0,\alpha/2,\alpha}_{\mathcal{H}}}\int_{\mathbb{R}^n}\int_0^\infty
	\frac{ye^{-\frac{y^2+|z|^2}{c\tau}}(\tau^{1/2}+|z|)^\alpha }{\tau^{\frac{n+3}{2}}}d\tau dz
	\le C \|f\|_{C^{\alpha/2,\alpha}_{t,\mathcal{H}}} y^{\alpha}.
	\end{align*}
	Regarding $I_2$,   as $\displaystyle \int_0^\infty  y\partial_y(y e^{-y^2/4\tau}) \frac{d\tau}{\tau^{3/2}} = 0$ we can write 
	\begin{align*}
	|I_2 |&= \Big|f(t,x) \frac1{2\sqrt{\pi}}\int_0^\infty  y\partial_y(y e^{-y^2/4\tau}) \Big( e^{-\tau{\LL}}1(t,x) -1\Big)  \frac{d\tau}{\tau^{3/2}}\Big|
	\le C \|f\|_{C^{\alpha/2,\alpha}_{t,\mathcal{H}}}y^\alpha.
	\end{align*}
	Where in the last inequality we have used  Lemma \ref{'lemma4'} (3).
	
	Conversely, suppose that $f\in	\Lambda^\alpha_{\mathcal{L}}$. We can write
\begin{multline*}
	f(t+\tau,x+z)-f(t,x)\\=(\mathcal{P}_yf(t+\tau,x+z)-\mathcal{P}_y f(t,x))+(f(t+\tau,x+z)-\mathcal{P}_yf(t+\tau, x+z))+ (\mathcal{P}_yf(t,x)-f(t,x)).
\end{multline*}
	Let  $y=\tau^{1/2}+|z|$. For the second summand  we have  
	\begin{multline*}
	\Big \|f(t+\tau,x+z)-\mathcal{P}_yf( t+\tau,x+z)\Big\|_\infty=\left\| -\int_0^y\frac{\partial \mathcal{P}_{y'}f(t+\tau,x+z)}{\partial y'}dy'\right\|_\infty
	\\ \leq C\|f\|_{C^{0,\alpha/2,\alpha}_{\mathcal{H}}}\int_0^y y'^{-1+\alpha}dy' =C\|f\|_{	\Lambda^\alpha_{\mathcal{L}}}y^\alpha=C\|f\|_{	\Lambda^\alpha_{\mathcal{L}}}(\tau^{1/2}+|z|)^\alpha.
	\end{multline*}
	 A similar estimate can be performed for  the third summand.
	On the other hand by the Mean Value Theorem and Lemma \ref{est2}, we have  
	\begin{align}\label{reparto}
	\nonumber |\mathcal{P}_yf(t+\tau,x+z)-\mathcal{P}_yf(t,x)|&\leq|\mathcal{P}_yf(t+\tau,x+z)-\mathcal{P}_yf(t+\tau,x)|+|\mathcal{P}_yf(t+\tau,x)-\mathcal{P}_yf(t,x)|
	\nonumber  \\&\le|\nabla_x \mathcal{P}_yf(t+\tau,x+\theta z)||z|+|\partial_t\mathcal{P}_yf(t+\lambda\tau,x)||\tau|.	\end{align}

	We observe that by the semigroup property, integration by parts  and Lemma \ref{lemma5B} (ii), we have 
	\begin{eqnarray*} \Big| \partial_{x_i} \partial_y\mathcal{P}_yf(t,x) \Big|&=& \Big|\int_{\R^{n+1}} \partial_{x_i} \mathcal{P}_{y/2}(\tau,x,z)\partial_y\mathcal{P}_y f(t-\tau,x-z)\big|_{y/2}d\tau dz \\
	&&  + \int_{\R^{n+1}} \mathcal{P}_{y/2}(\tau,x,z)\partial_{x_i}\partial_y\mathcal{P}_y f(t-\tau,x-z)\big|_{y/2}d\tau dz \Big | \\ &\le & \int_{\R^{n+1}} \Big| (\partial_{x_i}+\partial_{z_i})\mathcal{P}_{y/2}(\tau,x,z)\partial_y \mathcal{P}_y f(t-\tau,x-z)\big|_{y/2}\Big |d\tau dz  \le Cy^{-2+\alpha}.
	\end{eqnarray*}
	
	Hence as by Lemma \ref{lemma5B} we have  
	$| \partial_{x_i} \mathcal{P}_y f(t,x)| \le C/y$, then $$ \Big| \partial_{x_i} \mathcal{P}_yf(t,x)\Big|=\Big|\int_y^\infty \partial_{y'} \partial_{x_i} \mathcal{P}_{y'} f(t,x) dy' \Big| \le Cy^{-1+\alpha}.$$
	The derivative  $\Big| \partial_{t} \mathcal{P}_yf(t,x) \Big|$ can be handled in a parallel way, this time  using point (iv) of Lemma  \ref{lemma5B}, we get  $\Big| \partial_{t} \mathcal{P}_yf(t,x) \Big|\le Cy^{-2+\alpha}$. Then going back to (\ref{reparto})   we have 
	$$	|\mathcal{P}_yf(t+\tau,x+z)-\mathcal{P}_yf(t,x)|	\leq C\|f\|_{	\Lambda^\alpha_{\mathcal{L}}}(\tau^{1/2}+|z|)^{\alpha}.$$
	
	Finally we shall see  that $(1+|x|)^\alpha f\in L^{\infty}(\R^{n+1}).$ Given $k\in \mathbb{N}$,  a direct application of Lemma \ref{lemma5B} (ii) gives $\Big| \partial^k_y x^\gamma_i \mathcal{P}_yf(t,x)\Big| \le C \|f\|_\infty y^{-(k+\gamma+s)}, s >0.$ Moreover by the semigroup property we have $\partial^k_y x^k_i \mathcal{P}_yf(t,x) = \int_{\R^{n+1}} x^k_i \mathcal{P}_{y/2}(\tau,x,z)\partial^k_y\mathcal{P}_y f(t-\tau,x-z)\big|_{y/2}d\tau dz.$ For  $k=[\alpha]+1$, the hypothesis and Lemma \ref{lemma5B} (ii) give    	
	$\Big| \partial^k_y x^k_i \mathcal{P}_yf(t,x)\Big|  \le C \|f\|_{	\Lambda^\alpha_{\mathcal{L}}} y^{-(k+k-\alpha)}.$ Then an iterated integration gives 
	$\Big|  x^k_i \mathcal{P}_yf(t,x)\Big|  \le C \|f\|_{	\Lambda^\alpha_{\mathcal{L}}} y^{-(k-\alpha)}.$ 	Now for $|x|  >1$  and $0<\alpha <1$ we have
	\begin{align*}
	\nonumber	|x|^\alpha|f(t,x)|&\le |x|^\alpha\sup_{0<y<\frac{1}{|x|}}|\mathcal{P}_yf(t,x)|\le |x|^\alpha\sup_{0<y<\frac{1}{|x|}}\left(|\mathcal{P}_yf(t,x)-\mathcal{P}_{\frac{1}{|x|}}f(t,x)|+|\mathcal{P}_{\frac{1}{|x|}}f(t,x)|\right)\\
	&\le |x|^\alpha\sup_{0<y<\frac{1}{|x|}}\left| \int_y^{\frac{1}{|x|}}\partial_{z_1} \mathcal{P}_{z_1}f(t,x) d{z_1}  \right|+ C\|f\|_{	\Lambda^\alpha_{\mathcal{L}}} \\ &\le
	 |x|^\alpha\sup_{0<y<\frac{1}{|x|}}\left| \int_y^{\frac{1}{|x|}}z_1^{-(1-\alpha)}d{z_1}  \right|+ C\|f\|_{	\Lambda^\alpha_{\mathcal{L}}} \le C.  	\end{align*}

\end{proof}

\subsection{Elliptic Hermite setting} \label{EllipticSeting2}

\

   Let $g$ an $L^\infty(\mathbb{R}^n)$ function. Consider, as in Remark \ref{EllipticSeting}  the function $f(t,x) = g(x)$.  It is clear that if $g\in C^\alpha_{\HH}$ if and only if $f\in C^{\alpha/2,\alpha}_{t,\HH}$.   Moreover, as $\mathcal{P}_y f(t,x) = P_yg(x)$,  $g\in \Lambda^\alpha_{\HH}$ if and only if $f\in \Lambda^\alpha_{\LL}$.  Hence, for $0<\alpha <1$,  Proposition \ref{convergencia2} and Theorem \ref{caracHolder2}  have as consequences the continuity of the functions $g\in \Lambda^\alpha_{\HH}$ and the identity $ \Lambda^\alpha_{\HH}=  C_{\HH}^\alpha(\mathbb{R}^n).$

\section{Proofs of Theorems \ref{teo151} and \ref{teo16}. Coincidence with H\"older spaces for $ \alpha >1 .$}
\label{Main}

\begin{remark}\label{varioslambdas2}
	Observe that for bounded functions $f$, Lemma \ref{lemma5B} assures that \newline
	$\displaystyle \left\|\partial_y^k \mathcal{P}_y f\right\|_{L^\infty(\mathbb{R}^{n+1})} \le C\|f\|_\infty y^{-k}.$ Therefore we can assume in Definition \ref{ZygmundPar} that $y<1$.
\end{remark}

\begin{lemma}\label{'lemma16B'}  
	Let $f\in L^\infty(\R^{n+1})$, $\alpha>0,$  and $k, l$ integers bigger than $\alpha$. Then, for $y>0$, the following conditions are equivalent:
	\begin{itemize}
		\item[(a)]
		$\left\|\partial_y^k \mathcal{P}_y f \right\|_{L^\infty(\mathbb{R}^{n+1})}\leq A_k y^{-k+\alpha}
		$
		\item[(b)]
		$
		\left\|\partial_y^l \mathcal{P}_y f \right\|_{L^\infty(\mathbb{R}^{n+1})}\leq A_l  y^{-l+\alpha},
		$
	\end{itemize}
	where $A_k$ and $A_l$ are  positive constants with $A_k \sim A_l$.
\end{lemma}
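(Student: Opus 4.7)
The plan is to prove both implications assuming, without loss of generality, that $l>k$ (the case $l<k$ is the same with the roles interchanged). The proof uses only two ingredients: the semigroup identity $\mathcal{P}_y = \mathcal{P}_{y/2}\mathcal{P}_{y/2}$, which together with $\partial_y \mathcal{P}_y = -\sqrt{\LL}\,\mathcal{P}_y$ yields the operator factorisation
\[
\partial_y^l \mathcal{P}_y \;=\; (-\sqrt{\LL})^{l-k}\mathcal{P}_{y/2}\circ(-\sqrt{\LL})^{k}\mathcal{P}_{y/2}\;=\;\bigl(\partial_z^{l-k}\mathcal{P}_z\bigr)\big|_{z=y/2}\circ\bigl(\partial_z^{k}\mathcal{P}_z\bigr)\big|_{z=y/2},
\]
and the $L^\infty$--$L^\infty$ operator bound $\|\partial_y^j \mathcal{P}_y g\|_\infty \leq C_j\,y^{-j}\|g\|_\infty$ for every $j\geq 1$ and $g\in L^\infty(\mathbb{R}^{n+1})$, which is precisely Lemma 5B (the estimate recalled in Remark~\ref{varioslambdas2}).

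For the implication $(a)\Rightarrow(b)$ I would apply the factorisation above with $g_y:=\partial_z^k\mathcal{P}_z f|_{z=y/2}$. By hypothesis $(a)$, $\|g_y\|_\infty \leq A_k(y/2)^{-k+\alpha}$, and by Lemma 5B the operator $\partial_z^{l-k}\mathcal{P}_z$ at $z=y/2$ maps $L^\infty$ to $L^\infty$ with norm $\leq C(y/2)^{-(l-k)}$. Combining these two estimates,
\[
\bigl\|\partial_y^l\mathcal{P}_y f\bigr\|_{L^\infty} \;\leq\; C\,(y/2)^{-(l-k)}\,A_k\,(y/2)^{-k+\alpha} \;\leq\; C\,A_k\,y^{-l+\alpha},
\]
which is $(b)$ with $A_l \leq CA_k$.

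For the converse $(b)\Rightarrow(a)$ I would integrate $(l-k)$ times from $y$ up to $+\infty$. Setting $F_j(y):=\partial_y^{k+j}\mathcal{P}_y f$, Lemma 5B guarantees $\|F_j(y)\|_\infty \leq C\|f\|_\infty\, y^{-(k+j)}\to 0$ as $y\to\infty$ for each $0\leq j\leq l-k-1$ (here $k\geq 1$ because $k>\alpha>0$). Hence I may write
\[
\partial_y^k\mathcal{P}_y f \;=\; \frac{(-1)^{l-k}}{(l-k-1)!}\int_{y}^{\infty}(s-y)^{l-k-1}\,\partial_s^l\mathcal{P}_s f\,ds.
\]
Using hypothesis $(b)$ and the substitution $s = yt$ reduces the $L^\infty$ norm to
\[
\|\partial_y^k\mathcal{P}_y f\|_\infty \;\leq\; \frac{A_l}{(l-k-1)!}\,y^{-k+\alpha}\int_{1}^{\infty}(t-1)^{l-k-1}\,t^{-l+\alpha}\,dt.
\]
The main (and only) technical check is the convergence of this integral: near $t=1$ it behaves like $(t-1)^{l-k-1}$, integrable because $l>k$; at $t=\infty$ it decays like $t^{-k-1+\alpha}$, integrable because $k>\alpha$. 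This is exactly where both hypotheses $k,l>\alpha$ and $l>k$ are used, and it produces $A_k\leq CA_l$, completing the proof and the equivalence of constants.
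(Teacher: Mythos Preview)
Your proof is correct and follows essentially the same approach as the paper: the paper proves the case $l=k+1$ via the semigroup factorisation $\partial_y^{k+1}\mathcal{P}_y f = (\partial_z\mathcal{P}_z)|_{y/2}\bigl[(\partial_z^k\mathcal{P}_z f)|_{y/2}\bigr]$ together with Lemma~\ref{lemma5B}(ii), and for the converse integrates once using Remark~\ref{varioslambdas2}, leaving the general case to iteration. Your version simply carries out the general $l-k$ step in one shot (via the $(l-k)$-fold factorisation and the Taylor-remainder integral), which is a cosmetic rather than substantive difference.
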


\begin{proof}

Let $l=k+1$. By using the semigroup property we have
\begin{eqnarray*}
\partial_y^l \mathcal{P}_yf(t,x) = \int_{\R^{n+1}}\partial_y \mathcal{P}_{y}(\tau,x,z)\Big|_{y/2} \partial_y^k \mathcal{P}_{y}f(t-\tau,x-z)\Big|_{y/2} d\tau dz.
\end{eqnarray*}
By Lemma \ref{lemma5B} $(ii)$ we get $(a) \implies (b)$.  
For the converse,  Remark \ref{varioslambdas2} allows the integration
$\displaystyle  \partial_y^k \mathcal{P}_y f(t,x) = \int_y^\infty \partial_z^{k+1} \mathcal{P}_z f(t,x)dz, $ that gives the result.
\end{proof}

\begin{corollary}\label{encajadas2}
		Let $\alpha>0$. If $f\in 	\Lambda^\alpha_{\mathcal{L}}$, then for every $0<\beta<\alpha,$ $f\in \Lambda^{\beta}_{\LL}$.
	\end{corollary}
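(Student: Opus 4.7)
The plan is to invoke Lemma \ref{'lemma16B'} at the exponent $\beta$ so that it suffices to verify one derivative estimate at a conveniently large integer level. Concretely, fix $k = [\alpha]+1$; since $k > \alpha > \beta$, this integer is admissible for the criterion at level $\beta$. I will show that
\begin{equation*}
\bigl\|\partial_y^{k} \mathcal{P}_y f\bigr\|_{L^\infty(\mathbb{R}^{n+1})} \leq C\, y^{-k+\beta}, \qquad y>0,
\end{equation*}
and then the equivalence in Lemma \ref{'lemma16B'}, applied with the exponent $\beta$ in place of $\alpha$, transfers this bound down to level $l = [\beta]+1$, which is exactly what the membership $f\in\Lambda^\beta_{\mathcal{L}}$ requires.

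To prove the displayed estimate I split into two regimes for $y$. For $0<y\le 1$, the hypothesis $f\in\Lambda^\alpha_{\mathcal{L}}$ directly gives $\|\partial_y^{k}\mathcal{P}_y f\|_\infty \le C_k\, y^{-k+\alpha}$, and since $\alpha-\beta>0$ and $y\le 1$ we have $y^{-k+\alpha} = y^{\alpha-\beta}\, y^{-k+\beta}\le y^{-k+\beta}$. For $y>1$, I rely on the universal bound from Lemma \ref{lemma5B} (quoted in Remark \ref{varioslambdas2}): for any bounded $f$,
\begin{equation*}
\bigl\|\partial_y^{k}\mathcal{P}_y f\bigr\|_{L^\infty(\mathbb{R}^{n+1})} \le C\,\|f\|_\infty\, y^{-k}\le C\,\|f\|_\infty\, y^{-k+\beta},
\end{equation*}
since $y^{\beta}\ge 1$. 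Combining the two regimes yields the required estimate at level $k$ for all $y>0$, with constant controlled by $\|f\|_{\Lambda^\alpha_{\mathcal{L}}}$.

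No real obstacle is expected here: the corollary is essentially a bookkeeping consequence of (i) the flexibility in the order of the $y$-derivative provided by Lemma \ref{'lemma16B'} and (ii) the elementary fact that the $\Lambda^\alpha_{\mathcal{L}}$-condition is only binding for small $y$, while for large $y$ the trivial $L^\infty$-bound already beats any power decay $y^{-l+\beta}$ with $\beta>0$. The only thing to be careful about is choosing $k$ strictly larger than both $\alpha$ and $\beta$ so that Lemma \ref{'lemma16B'} is applicable at the exponent $\beta$; the choice $k=[\alpha]+1$ handles this.
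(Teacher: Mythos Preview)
Your argument is correct and follows essentially the same route as the paper: take $k=[\alpha]+1$, use the $\Lambda^\alpha_{\mathcal{L}}$ hypothesis to get $\|\partial_y^k\mathcal{P}_y f\|_\infty\le C y^{-k+\alpha}\le C y^{-k+\beta}$ for $y\le 1$, and then invoke Lemma~\ref{'lemma16B'} at level $\beta$ to descend to $l=[\beta]+1$. The only difference is that you make the large-$y$ regime explicit via Lemma~\ref{lemma5B}, whereas the paper leaves that to Remark~\ref{varioslambdas2}.
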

For the proof of this Corollary observe that, given $ k_\alpha= [\alpha]+1$, we have (for $y<1$) 
$$\Big\|\partial^{k_\alpha}_yP_yf\Big\| \le A_{k_\alpha}  \|f\|_{	\Lambda^\alpha_{\mathcal{L}}}y^{-k_\alpha +\alpha} \le A_{k_\alpha} \|f\|_{	\Lambda^\alpha_{\mathcal{L}}} y^{-k_\alpha +\beta}. $$ Then, Lemma \ref{'lemma16B'} gives the result.
\begin{lemma}\label{est2} 
	Let $\alpha>0$, $f\in 	\Lambda^\alpha_{\mathcal{L}}$ and $k=[\alpha]+1$.  
	\begin{enumerate}
		\item For every $\gamma \ge 0$ and $m,j\in \N_0$ such that $\gamma+m+j\ge k$ there exists a constant $C_{\gamma, m,j } $ such that $\||\cdot|^\gamma \partial_y^m\partial_{x_i}^j\mathcal{P}_yf \|_\infty \le C_{\gamma,m,j}  \|f\|_{	\Lambda^\alpha_{\mathcal{L}}}  y^{-(\gamma+m+j)+\alpha}$.
		\item For every $m$ such that   $m+2 \ge k$, there exists a constant $C_{m}$ such that  $\|\partial_y^{m}\partial_t \mathcal{P}_yf\|_\infty\le Cy^{-(m+2)+\alpha}$.
	\end{enumerate}
\end{lemma}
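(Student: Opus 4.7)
The plan is to prove (1) by a two-step bootstrap exploiting the semigroup property and kernel estimates, and then to deduce (2) from (1) via the parabolic Hermite Poisson equation.

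For (1), I first invoke Lemma \ref{'lemma16B'} to upgrade the hypothesis to $\|\partial_y^N \mathcal{P}_y f\|_\infty \le C_N \|f\|_{\Lambda^\alpha_{\mathcal{L}}} y^{-N+\alpha}$ for every integer $N \ge k$. In the case $m \ge k$, I write $\mathcal{P}_y = \mathcal{P}_{y/2} \circ \mathcal{P}_{y/2}$ and use that $\partial_y$ acting on $\mathcal{P}_{y_1+y_2}f$ may be realized as $\partial_{y_i}$ for either $i\in\{1,2\}$; pushing the $y$-derivatives onto the second copy of the semigroup while the $x$-derivatives fall on the kernel yields
\[
|x|^\gamma \partial_y^m \partial_{x_i}^j \mathcal{P}_y f(t,x) = \int_{\mathbb{R}^{n+1}} |x|^\gamma \partial_{x_i}^j \mathcal{P}_{y/2}(\tau,x,z)\, \bigl[\partial_y^m \mathcal{P}_{y/2} f\bigr](t-\tau,z)\, d\tau\, dz.
\]
The upgraded hypothesis bounds the second factor by $C y^{-m+\alpha}$, while Lemma \ref{lemma5B} supplies the uniform-in-$x$ kernel estimate $\int |x|^\gamma |\partial_{x_i}^j \mathcal{P}_{y/2}(\tau,x,z)|\, d\tau\, dz \le C y^{-\gamma - j}$ (the $|x|^\gamma$ factor is tamed by the $e^{-|x+z|^2 \tanh \tau / 4}$ piece of the Mehler kernel combined with $e^{-y^2/4\tau}$, which effectively restrict $|x|$ to scale $1/y$). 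The product yields the announced bound.

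To handle the remaining case $m < k$ with $m + j + \gamma \ge k$, fix $(t,x)$ and set $g(y) = |x|^\gamma \partial_{x_i}^j \mathcal{P}_y f(t,x)$. The previous step applied at order $k$ gives $|g^{(k)}(s)| \le C s^{-(k+j+\gamma)+\alpha}$, and the moment kernel bounds from Lemma \ref{lemma5B} ensure $g^{(l)}(y) \to 0$ as $y \to \infty$ for every $l \ge m$ (since $l + j + \gamma \ge k \ge 1$). The Fubini version of iterated integration,
\[
g^{(m)}(y) = \frac{(-1)^{k-m}}{(k-m-1)!} \int_y^\infty (s-y)^{k-m-1}\, g^{(k)}(s)\, ds,
\]
is therefore valid; inserting the bound on $g^{(k)}$ and changing variables $s = yu$ produces $|g^{(m)}(y)| \le C\, y^{-(m+j+\gamma)+\alpha}$, since $m+j+\gamma > \alpha$ renders the resulting beta-type integral finite.

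Part (2) follows from part (1) via the Poisson equation of Proposition \ref{PropEqPoisson}: using $\partial_y^2 \mathcal{P}_y f = \mathcal{L}\mathcal{P}_y f = \partial_t \mathcal{P}_y f + \sum_i(-\partial_{x_i}^2 + x_i^2)\mathcal{P}_y f$, one rearranges to get
\[
\partial_y^m \partial_t \mathcal{P}_y f = \partial_y^{m+2} \mathcal{P}_y f + \sum_{i=1}^n \bigl(\partial_y^m \partial_{x_i}^2 \mathcal{P}_y f - x_i^2 \partial_y^m \mathcal{P}_y f\bigr),
\]
and each term is controlled by $C y^{-(m+2)+\alpha}$: the first by Lemma \ref{'lemma16B'} (since $m+2 \ge k$), the rest by part (1) with $(\gamma,j) = (0,2)$ or $(2,0)$. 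The main obstacle is the kernel moment estimate $\int |x|^\gamma |\partial_{x_i}^j \mathcal{P}_{y/2}(\tau,x,z)|\, d\tau\, dz \le C y^{-\gamma-j}$ for general, possibly fractional, $\gamma \ge 0$. This should follow from Lemma \ref{lemma5B} by direct analysis of the Mehler kernel, or by interpolating between the integer exponents $[\gamma]$ and $[\gamma]+1$ via H\"older's inequality.
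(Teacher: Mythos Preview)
Your argument for part~(1) is essentially the paper's: the same semigroup splitting for $m\ge k$ followed by iterated integration for $m<k$. Your kernel representation (evaluating the inner factor at $(t-\tau,z)$ and placing $\partial_{x_i}^j$ directly on the kernel) is the change-of-variables equivalent of the paper's formulation, where the inner factor is evaluated at $(t-\tau,x-z)$ and integration by parts converts $\partial_{x_i}^j$ into $(\partial_{x_i}+\partial_{z_i})^j$ acting on the kernel $\mathcal{P}_{y/2}(\tau,x,z)$ of \eqref{solution}. After that change of variables your ``main obstacle'' is precisely the sum $\sum_{p+q=j}\int_{\R^{n+1}}|x|^\gamma|\partial_{z_i}^p\partial_{x_i}^q\mathcal{P}_{y/2}(\tau,x,z)|\,d\tau\,dz$, which Lemma~\ref{lemma5B}(ii) bounds directly for every real $\gamma\ge 0$ (choose $s=2q$ when $p-q+\gamma>0$ and $s=j+\gamma$ otherwise); no interpolation is needed.

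Your treatment of part~(2) is a genuine variation. The paper repeats the semigroup trick with $\partial_\tau$ on the kernel and invokes the dedicated estimate Lemma~\ref{lemma5B}(iv), then integrates down for $m<k$. You instead use the Poisson equation $\partial_t\mathcal{P}_yf=\partial_y^2\mathcal{P}_yf-\mathcal{H}\mathcal{P}_yf$ to reduce everything to part~(1) and Lemma~\ref{'lemma16B'}. This is clean and entirely avoids Lemma~\ref{lemma5B}(iv); the paper's route, on the other hand, keeps parts~(1) and~(2) structurally parallel. Both are correct.
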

\begin{proof}
	 Observe that the case $\gamma=j=0$ follows from the definition of the space $	\Lambda^\alpha_{\mathcal{L}}$, so we will exclude it in the following. Let us analyze the case when $m\ge k$. By the semigroup property and integration by parts  we have
	\begin{align*}
	\Big||x|^\gamma \partial_y^{m}\partial_{x_i}^j\mathcal{P}_yf(t,x)\Big|&=\Big||x|^\gamma\partial_{x_i}^j\int_{\R^{n+1}} \mathcal{P}_{y/2}(\tau,x,z)\partial_y^{m}\mathcal{P}_y f(t-\tau,x-z)\big|_{y/2}d\tau dz\Big|\\&=\Big||x|^\gamma \int_{\R^{n+1}}(\partial_{x_i}+\partial_{z_i})^j \mathcal{P}_{y/2}(\tau,x,z) \partial_y^{m}\mathcal{P}_y f(t-\tau,x-z)\big|_{y/2}d\tau dz\Big| \\
	&\le C \|\partial_y^{m}\mathcal{P}_y f\big|_{y/2}\|_\infty \sum_{p+q=j} 
	\int_{\R^{n+1}}|x|^\gamma|\partial_{z_i}^{p}\partial_{x_i}^q \mathcal{P}_{y/2}(\tau,x,z)|d\tau dz  \\&
	\le C_{\gamma,m,j}  \|f\|_{	\Lambda^\alpha_{\mathcal{L}}}y^{-(\gamma+m+j) +\alpha}.	\end{align*}
	In the last inequality we have use the hypothesis on $f$ and Lemma \ref{lemma5B} (ii) in each summand.  We have chosen $s=j+\gamma$ in the case $p-q+\gamma\le 0$. While in the case  $p-q+\gamma >0$, we choose  $s=  2q$.

Now we prove $(2)$  for $m\ge k$.  By the semigroup property, the hypothesis on $f$ and Lemma \ref{lemma5B} (iv)  we have 
\begin{align*}
\partial_y^{m}\partial_{t}\mathcal{P}_yf(t,x)&=\int_{\R^{n+1}}\partial_{\tau} \mathcal{P}_{y/2}(\tau,x,z)\partial_y^{m}\mathcal{P}_y f(t-\tau,x-z)\big|_{y/2}d\tau dz\\
&\le C \|\partial_y^{m}\mathcal{P}_y f\big|_{y/2}\|_\infty\int_{\R^{n+1}}\partial_{\tau}\mathcal{P}_{y/2}(\tau,x,z)d\tau dz\le C\|f\|_{	\Lambda^\alpha_{\mathcal{L}}}y^{-(m+2)+\alpha}.
\end{align*}

In both cases,  for $m<k$ we start from the above estimates for  the case $m=k$ and then we perform an $k-m$ iterated integration. 
\end{proof}

\begin{proposition}\label{taman}
	Let $\alpha>0$. If $f\in 	\Lambda^\alpha_{\mathcal{L}}$, then $|x|^\alpha f\in L^\infty(\R^{n+1})$. 
\end{proposition}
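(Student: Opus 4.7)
The bound for $|x|\le 1$ is immediate from $f\in L^\infty(\R^{n+1})$, so the content of the statement lies in controlling $|x|^\alpha|f(t,x)|$ when $|x|>1$. My plan is to exploit the scale $y_0:=1/|x|\in(0,1)$ together with Taylor's formula with integral remainder applied to the smooth function $y\mapsto\mathcal{P}_y f(t,x)$, expanded about $y_0$. The weighted pointwise estimates of Lemma \ref{est2} then balance powers of $|x|$ against powers of $y_0$, while the defining property of $\Lambda^\alpha_{\mathcal{L}}$ controls the Taylor remainder.

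Concretely, I fix a point $(t,x)$ with $|x|>1$ at which $\mathcal{P}_y f(t,x)\to f(t,x)$ as $y\to 0^+$. This convergence holds on a full-measure set, since the Poisson kernel $\mathcal{P}_y(\tau,x,z)$ is dominated by the radial majorant $\Phi_y$ built in Section \ref{preliminares}, so standard approximate-identity theory on $L^\infty(\R^{n+1})$ applies. Writing $k=[\alpha]+1$, Taylor expansion about $y_0$ followed by the limit $y\to 0^+$ yields
\begin{equation*}
f(t,x)=\sum_{m=0}^{k-1}\frac{(-y_0)^m}{m!}\,\partial_y^m\mathcal{P}_{y_0}f(t,x)+\frac{(-1)^k}{(k-1)!}\int_0^{y_0}s^{k-1}\,\partial_s^k\mathcal{P}_s f(t,x)\,ds.
\end{equation*}

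I then estimate each piece. For each $m\in\{0,\dots,k-1\}$, I apply Lemma \ref{est2}(1) with $\gamma=k-m$ and $j=0$, so that $\gamma+m+j=k$, to get $|x|^{k-m}|\partial_y^m\mathcal{P}_{y_0}f(t,x)|\le C\|f\|_{\Lambda^\alpha_{\mathcal{L}}}\,y_0^{\alpha-k}=C\|f\|_{\Lambda^\alpha_{\mathcal{L}}}|x|^{k-\alpha}$; multiplying by $y_0^m/m!=|x|^{-m}/m!$, every summand is bounded by $C\|f\|_{\Lambda^\alpha_{\mathcal{L}}}|x|^{-\alpha}$. For the remainder, only the defining inequality $\|\partial_s^k\mathcal{P}_s f\|_\infty\le C\|f\|_{\Lambda^\alpha_{\mathcal{L}}}\,s^{-k+\alpha}$ is needed, giving
\begin{equation*}
\int_0^{y_0}s^{k-1}|\partial_s^k\mathcal{P}_s f(t,x)|\,ds\le C\|f\|_{\Lambda^\alpha_{\mathcal{L}}}\int_0^{y_0}s^{\alpha-1}\,ds=\frac{C}{\alpha}\|f\|_{\Lambda^\alpha_{\mathcal{L}}}|x|^{-\alpha},
\end{equation*}
valid for every $\alpha>0$, whether integer or not. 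Summing these estimates produces $|x|^\alpha|f(t,x)|\le C_\alpha\|f\|_{\Lambda^\alpha_{\mathcal{L}}}$ at a.e.\ such $(t,x)$, finishing the argument.

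The principal technical point is the pointwise limit $\mathcal{P}_y f(t,x)\to f(t,x)$ as $y\to 0^+$: uniform convergence of the kind proved in Proposition \ref{convergencia2} is only immediate for $0<\alpha<1$, so for the general case one relies on pointwise a.e.\ convergence via Lebesgue-point theory for the majorant $\Phi_y$. The second delicate ingredient is the calibration $y_0=1/|x|$, which is the unique scale making $y_0^{\alpha-k}\,|x|^{k-m}\,y_0^m=|x|^{-\alpha}$ and simultaneously forcing $\int_0^{y_0}s^{\alpha-1}\,ds$ to produce exactly the decay $|x|^{-\alpha}$; any other scale would leave residual powers of $|x|$ unaccounted for and the argument would fail to close.
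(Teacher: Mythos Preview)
Your proof is correct and in fact streamlines the paper's argument. Both approaches hinge on the same two ingredients: the weighted estimates of Lemma~\ref{est2}(1) applied at the scale $y_0=1/|x|$, and the defining bound $\|\partial_y^k\mathcal{P}_yf\|_\infty\le C y^{-k+\alpha}$ to control the piece near $y=0$. The difference is organizational. The paper writes $f-\mathcal{P}_{1/|x|}f$ as a $k$-fold iterated integral of $\partial_y^k\mathcal{P}_yf$, picking up boundary terms at $y=1/|x|$ at each layer; for non-integer $\alpha$ the innermost integral $\int z^{\alpha-k}\,dz$ is elementary, but for integer $\alpha$ it becomes $\int z^{-1}\,dz$ and produces a logarithm, which the paper then handles by an explicit computation (carried out for $\alpha=1$, with higher integers left to the reader). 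Your Taylor expansion with integral remainder is the Fubini-collapsed version of that same iterated integral: the remainder becomes the single integral $\int_0^{y_0}s^{\alpha-1}\,ds$, which converges for every $\alpha>0$ and removes the need for any case distinction. So your route is more elementary and uniform in $\alpha$, at no cost.

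One small remark: your hedging about only having a.e.\ convergence $\mathcal{P}_yf\to f$ is unnecessary. By Corollary~\ref{encajadas2}, $f\in\Lambda^\alpha_{\mathcal{L}}$ implies $f\in\Lambda^{\alpha'}_{\mathcal{L}}$ for any $0<\alpha'<\min(\alpha,1)$; Theorem~\ref{caracHolder2} (already established, and independent of Proposition~\ref{taman}) then gives $f\in C^{\alpha'/2,\alpha'}_{t,\mathcal{H}}$, and Proposition~\ref{convergencia2} yields uniform convergence. So you may take the limit $y\to0^+$ in the Taylor expansion at every point, not just almost every point.
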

\begin{proof} If $\alpha$ is not an integer we can use the same argument as in the proof of Theorem \ref{caracHolder2}.

Let  $\alpha=1$,  by using the arguments in the proof of Theorem \ref{caracHolder2}, we can obtain 
$$\Big||x|^2 \mathcal{P}_{y}f(t,x)\Big|+ \Big||x| \partial_{v} \mathcal{P}_{v}f(t,x)\Big|_{v=\frac1{|x|}} \le 
 \|f\|_{\Lambda^{1}_{\LL}}y^{-1}.$$ 
	Then, by using that
	$\partial_{z_1} \mathcal{P}_{z_1}f(t,x) = -\int_{z_1}^\frac1{x} \partial^2_{z_2} \mathcal{P}_{z_2}f(t,x)dz_2 + \partial_{v} \mathcal{P}_{v}f(t,x)\Big|_{v=\frac1{|x|}}$, we have
	\begin{multline*}
||x| f(t,x) |	\\ \le |x|\sup_{0<y<\frac{1}{|x|}}\left| -\int_y^{\frac{1}{|x|}} \Big( \int_{z_1}^\frac1{|x|} \partial^2_{z_2} \mathcal{P}_{z_2}f(t,x)dz_2 + \partial_{v} \mathcal{P}_{v}f(t,x)\Big|_{v=\frac1{|x|}}  \Big)d{z_1}  \right| +\Big| |x| \mathcal{P}_{1/|x|}f(t,x)\Big| \\
	\le |x|\sup_{0<y<\frac{1}{|x|}}\int_y^{\frac{1}{|x|}}  \int_{z_1}^\frac1{|x|} \left| \partial^2_{z_2} \mathcal{P}_{z_2}f(t,x)\right|dz_2d{z_1} +\|f\|_{\Lambda^{1/2,1}_{t,\mathcal{H}}} \sup_{0<y<\frac{1}{|x|}}|x|\left(\frac1{|x|}-y\right) + C\|f\|_{\Lambda^{1/2,1}_{t,\mathcal{H}}}  \\
	 \le\|f\|_{\Lambda^{1/2,1}_{t,\mathcal{H}}}  |x|\sup_{0<y<\frac{1}{|x|}} \int_y^{\frac{1}{|x|}}  \int_{z_1}^\frac1{|x|} z_2^{-1}dz_2  dz_1+ C\|f\|_{\Lambda^{1/2,1}_{t,\mathcal{H}}} . 	\end{multline*}
	Since for every $0<y<\frac{1}{|x|}$ we have
	\begin{align*}  |x|\int_y^{\frac{1}{|x|}}  \int_{z_1}^\frac1{|x|} z_2^{-1}dz_2  dz_1
	&=|x|\int_y^{\frac{1}{|x|}} \left( \log\left(\frac1{|x|}\right) - \log z_1\right)\, dz_1  \\&=|x|\left[
	\log\left(\frac1{|x|}\right) \left(\frac1{|x|}-y\right) -  \left( \frac1{|x|}\log\frac1{|x|}- \frac1{|x|} -
	y\log y + y \right) \right]\\&= |x|y\log (|x|y)+ |x|\left(\frac1{|x|}-y\right)\le C,
	\end{align*}	
	we conclude that $|x| |f(t,x)| \le C\|f\|_{\Lambda^{1/2,1}_{t,\mathcal{H}}}$.
	
	For the cases in which $\alpha$ is an integer bigger that 1, we have to write $\partial_{z_1} \mathcal{P}_{z_1}f$ in terms of the integral of the derivative of order $k$, where $k=[\alpha]+1$, and proceed analogously. We leave  the details to the interested reader.
\end{proof}

 \subsection{Proof of Theorem \ref{teo151}.}

 \begin{proof} {\bf Proof of epigraph (1) in Theorem \ref{teo151}.}
 	Let $k=[\alpha]+1$. Since 
 	\newline $\displaystyle  \int_{\mathbb{R}^{n+1}} \partial_y^k \mathcal{P}_y(\tau,x,z)f(t-\tau,x+z)d\tau dz=\int_{\mathbb{R}^{n+1}} \partial_y^k \mathcal{P}_y(\tau,x,-z)f(t-\tau,x-z)d\tau dz,$ 
	\newline and  $\displaystyle \int_0^\infty\partial_y^k\left(\frac{ye^{-\frac{y^2}{4\tau}}}{\tau^{3/2}} \right)d\tau=0$ we have
 	\begin{align}\label{ageneralizar}
\nonumber 	\partial_y^k \mathcal{P}_yf(t,x) &= \frac{1}{2}\int_{\mathbb{R}^{n+1}}\partial_y^k \mathcal{P}_y(\tau,x,z)(f(t-\tau,x-z)+f(t-\tau,x+z)-2f(t,x)) d\tau dz 
 	\\  & \quad \quad+\frac{1}{2}\int_{\mathbb{R}^{n+1}}\Big(\partial_y^k \mathcal{P}_y(\tau,x,z)-\partial_y^k \mathcal{P}_y(\tau,x,-z)\Big) f(t-\tau,x-z)d\tau dz \\ \nonumber & \quad \quad  +\frac{f(t,x)}{2\sqrt{\pi}} \int_{0}^\infty\partial_y^k\left(\frac{ye^{-\frac{y^2}{4\tau}}}{\tau^{3/2}} \right)\left(e^{-\tau{\LL}}1(t,x)-1\right)d\tau \\ \nonumber &= I_1+I_2+I_3.
 	\end{align}
 	By Lemma \ref{lemma5B},  
 	$\displaystyle 
 	|I_1|\le C\int_{\mathbb{R}^n}\int_0^\infty\frac{e^{-\frac{y^2+|z|^2}{c\tau}}(\tau^{1/2}+|z|)^\alpha}{\tau^{\frac{n+k}{2}}}\frac{d\tau}{\tau}dz\le C y^{\alpha-k}.
 	$
 	For $I_3$ {we use Proposition \ref{taman} and the proof of Lemma \ref{'lemma4'} (3)  to get}
 	\begin{align*}
 	|y^kI_3|=\bigg|\frac{f(t,x)}{2\sqrt{\pi}} \int_{0}^\infty y^k\partial_y^k\left(\frac{ye^{-\frac{y^2}{4\tau}}}{\tau^{3/2}} \right)\left(e^{-\tau{\LL}}1(t,x)-1\right)d\tau \bigg|\leq  C [f]_{M^\alpha}y^{\alpha}.
 	\end{align*}
 	Regarding $I_2,$ we have
 	\begin{align*}
 &	2I_2=\int_{\mathbb{R}^{n+1}}(\partial_y^k \mathcal{P}_y(\tau,x,z)-\partial_y^k \mathcal{P}_y(\tau,x,-z)) f(t-\tau,x-z) d\tau dz 
 	\\ &=\int_{\mathbb{R}^n}\int_0^\infty\partial_y^k\left(\frac{ye^{-\frac{y^2}{4\tau}}}{\tau^{3/2}}\right) \frac{e^{-\frac{|z|^2}{4}\coth\tau}}{2\sqrt{\pi}(2\pi\sinh(2\tau))^{n/2}}\left( e^{-\frac{|2x-z|^2}{4}\tanh \tau}- e^{-\frac{|2x+z|^2}{4}\tanh \tau}\right)\\ &  \times f(t-\tau,x-z) d\tau dz. 
 	\end{align*}
 	By the Mean Value Theorem applied to the function  $e^{-\frac{|2x-z|^2}{4}\tanh \tau}$   we get
 	\begin{align*}
 	|2I_2| &\le C\int_0^\infty\partial_y^k\left(\frac{ye^{-\frac{y^2}{4\tau}}}{\tau^{3/2}}\right) \int_{\mathbb{R}^n}
 	\frac{e^{-\frac{|z|^2}{4}\coth\tau}}{(\sinh(2\tau))^{n/2}}(\tanh\tau)^{1/2}|z||f(t-\tau,x-z)|dz  d\tau\\
 	&\underbrace{\leq}_{\substack{\frac{z\sqrt{\coth\tau}}{2}=w}}C\|f\|_\infty\int_0^\infty\partial_y^k\left(\frac{ye^{-\frac{y^2}{4\tau}}}{\tau^{3/2}}\right)  \int_{\R^n}\frac{e^{-|w|^2}|w|(\tanh\tau)^{1/2}}{(\sinh(2\tau))^{n/2}(\coth\tau)^{\frac{n+1}{2}}}dw d\tau\\ &\leq C_k\|f\|_\infty\int_0^\infty \frac{e^{-\frac{y^2}{c\tau}}}{\tau^{k/2+1}}\frac{(\tanh\tau)^{1/2}}{(\sinh(2\tau))^{n/2}(\coth\tau)^{\frac{n+1}{2}}} d\tau
 \\ & 	\le C_k \|f\|_\infty \int_0^\infty \frac{e^{-\frac{y^2}{c\tau}}}{\tau^{k/2}}\tau^{\alpha/2}\frac{d\tau}{\tau} \le C_k y^{-k+\alpha}.
 	\end{align*}
 	We conclude that $f\in	\Lambda^\alpha_{\mathcal{L}}$.

 	For  the converse. 
 	If  $f\in	\Lambda^\alpha_{\mathcal{L}}$ with $\alpha<1$.  the result is a consequence of Theorem \ref{caracHolder2}.  
 	If $\alpha \ge 1$, by Theorem \ref{caracHolder2},   $f\in\Lambda_{\mathcal{L}}^{\alpha'}=C_{t,\mathcal{H}}^{\alpha'/2,\alpha'}$ for some $\alpha'<1$, then $\|y\partial_y\mathcal{P}_yf\|_{L^\infty(\mathbb{R}^{n+1})}\rightarrow 0$, as $y\to 0^+$.
 	On the other hand, by  the proof of Proposition \ref{convergencia2}  we know that $\|\mathcal{P}_yf-f\|_{L^\infty(\mathbb{R}^{n+1})}\rightarrow 0$, as $y\to 0^+$ Hence we have
 	$$
 	f(t,x)=\int_0^y y'\frac{\partial^2\mathcal{P}_{y'}f(t,x)}{(\partial y')^2}dy'-y\partial_y\mathcal{P}_yf(t,x)+\mathcal{P}_yf(t,x).
 	$$
	We only do computations for $g(t,x)=\mathcal{P}_y f(t,x)$. For the other cases we have to follow the same path. By using Lemma \ref{est2} we have, for $y=\tau^{1/2}+|z|$, 
 	\begin{small}
 		\begin{align*}
 		|g(t-\tau,x+z)+&g(t-\tau,x-z)-2g(t,x)|\\ &\le |\left[\nabla_x g(t-\tau, x+\theta z)-\nabla_x g(t-\tau, x-\lambda z)\right]| |z|+2|\partial_t g(t- \eta\tau,x)|\tau\\
 		&\leq\left|  D_x^2g(t-\tau,x+\nu z)\right|(\theta+\lambda)|z|^2+2|\partial_t\mathcal{P}_yf(t-\eta\tau,x)|\tau\\
 		&\le C\|f\|_{	\Lambda^\alpha_{\mathcal{L}} }(\tau^{1/2}+|z|)^{-2+\alpha}(|z|^2+\tau)\le C\|f\|_{	\Lambda^\alpha_{\mathcal{L}} }(\tau^{1/2}+|z|)^{\alpha},
 		\end{align*}
 	\end{small}
 	where $0<\theta,\lambda<1$, $-1<\nu<1$.
 
 	The fact that $(1+|x|)^\alpha f\in L^\infty(\R^{n+1})$ follows from Proposition \ref{taman}.
 	
 \end{proof}

For the proof of epigraph (2) in Theorem \ref{teo151}, we shall prove 
the following theorem. 
\begin{theorem}\label{teo15}
 Suppose that $\alpha>2$. Then $f\in	\Lambda^\alpha_{\mathcal{L}}$ if and only if
		$$
		\partial_{x_i}f, x_if\in 	\Lambda^{\alpha-1}_{\mathcal{L}}, \:\;i=1,\dots,n,
		\quad \hbox{  and  } \quad 
		{\partial_t}f \in 	\Lambda^{\alpha-2}_{\mathcal{L}}.
		$$
		In this case the following equivalence holds
		$$
		\|f\|_{\Lambda^{\alpha-2}_{\mathcal{H}}}\sim  \sum_{i=1}^n\left(\|{\partial}_{x_i}f\|_{	\Lambda^{\alpha-1}_{\mathcal{L}}} +\|x_if\|_{	\Lambda^{\alpha-1}_{\mathcal{L}}}\right)+ \|{\partial_t}f \|_{	\Lambda^{\alpha-2}_{\mathcal{L}}}.
		$$
\end{theorem}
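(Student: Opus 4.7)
I will establish both directions of the equivalence, based on three main ingredients already at our disposal: Proposition \ref{PropEqPoisson} (the identity $\partial_y^2 \mathcal{P}_y f = \mathcal{L}\mathcal{P}_y f$), Lemma \ref{est2} (weighted size estimates for derivatives of $\mathcal{P}_y f$), and the commutation rules that relate $\partial_t$, $\partial_{x_i}$, $x_i$ with $e^{-\tau \mathcal{L}}$ and hence with $\mathcal{P}_y$. Set $k := [\alpha] + 1$, so that $k - 1 = [\alpha-1]+1$ and $k-2 = [\alpha-2]+1$ (with the evident adjustment when $\alpha$ is an integer). Since $\partial_t$ commutes with $\mathcal{L}$, we have $\mathcal{P}_y(\partial_t f) = \partial_t \mathcal{P}_y f$. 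Moreover, from $[A_i, \mathcal{H}] = 2 A_i$ and $[A_{-i}, \mathcal{H}] = -2 A_{-i}$ one derives $A_{\pm i}\, e^{-\tau \mathcal{L}} f = e^{\mp 2\tau} e^{-\tau \mathcal{L}}(A_{\pm i} f)$, and combining these with $\partial_{x_i} = (A_i - A_{-i})/2$, $x_i = (A_i + A_{-i})/2$ yields the coupled identities
\[
\partial_{x_i}\, e^{-\tau \mathcal{L}} f = e^{-\tau \mathcal{L}}\!\bigl[\cosh(2\tau)\,\partial_{x_i}f - \sinh(2\tau)\,x_i f\bigr], \quad x_i\,e^{-\tau \mathcal{L}} f = e^{-\tau \mathcal{L}}\!\bigl[-\sinh(2\tau)\,\partial_{x_i}f + \cosh(2\tau)\,x_i f\bigr].
\]

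For the $(\Rightarrow)$ direction, assume $f \in \Lambda^\alpha_{\mathcal L}$. Boundedness of $x_i f$ follows from Proposition \ref{taman} since $\alpha > 1$, while boundedness of $\partial_{x_i} f$ and $\partial_t f$ is obtained by iterating integration in $y$ of the Lemma \ref{est2} estimates $\|\partial_y^{k-1}\partial_{x_i}\mathcal{P}_y f\|_\infty + \|\partial_y^{k-2}\partial_t \mathcal{P}_y f\|_\infty \le C y^{\alpha - k}$ (the integrations converge near $0$ because $\alpha - k > -1$), combined with $\mathcal{P}_y f \to f$ in $L^\infty$ as $y \to 0$. The bound $\|\partial_y^{k-2}\mathcal{P}_y(\partial_t f)\|_\infty \le C y^{\alpha-k} = C y^{-(k-2)+(\alpha-2)}$ is immediate from $\mathcal{P}_y(\partial_t f) = \partial_t \mathcal{P}_y f$ and Lemma \ref{est2}(2). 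To show $\partial_{x_i} f, x_i f \in \Lambda^{\alpha-1}_{\mathcal L}$, integrate the displayed commutation identities against $\phi_y(\tau) := \frac{y}{2\sqrt\pi \tau^{3/2}} e^{-y^2/(4\tau)}$ in $\tau$ to relate $\mathcal{P}_y(\partial_{x_i} f)$ and $\mathcal{P}_y(x_i f)$ to $\partial_{x_i}\mathcal{P}_y f$ and $x_i\mathcal{P}_y f$; apply Lemma \ref{est2}(1) with $\gamma + j = 1$, $m = k-1$ to get $\|\partial_y^{k-1}\partial_{x_i}\mathcal{P}_y f\|_\infty + \|\partial_y^{k-1}(x_i\mathcal{P}_y f)\|_\infty \le C y^{\alpha - k}$, and estimate the correction integrals sharply to conclude $\|\partial_y^{k-1}\mathcal{P}_y(\partial_{x_i}f)\|_\infty + \|\partial_y^{k-1}\mathcal{P}_y(x_i f)\|_\infty \le C y^{-(k-1)+(\alpha - 1)}$.

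For the $(\Leftarrow)$ direction, assume $\partial_{x_i}f, x_i f \in \Lambda^{\alpha-1}_{\mathcal L}$ and $\partial_t f \in \Lambda^{\alpha-2}_{\mathcal L}$. Apply Proposition \ref{PropEqPoisson} to decompose
\[
\partial_y^k \mathcal{P}_y f = \partial_y^{k-2}(\partial_t + \mathcal{H})\mathcal{P}_y f = \partial_y^{k-2}\mathcal{P}_y(\partial_t f) + \partial_y^{k-2}\mathcal{H}\mathcal{P}_y f.
\]
The first summand is $\le C y^{\alpha - k}$ by hypothesis on $\partial_t f$. For the second, write $\mathcal{H}\mathcal{P}_y f = \sum_i\bigl[-\partial_{x_i}(\partial_{x_i}\mathcal{P}_y f) + x_i(x_i \mathcal{P}_y f)\bigr]$ and use the displayed commutation identities (in reverse) to move one of the two $\partial_{x_i}$ or $x_i$ factors past $\mathcal{P}_y$ so that it acts on $f$. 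This expresses each summand as a finite combination of terms of the form $R\,\mathcal{P}_y(\partial_{x_j}f)$ and $R\,\mathcal{P}_y(x_j f)$ (with $R \in \{\partial_{x_i}, x_i\}$; the extra order of differentiation is controlled by Lemma \ref{est2}), plus correction integrals whose kernels vanish to first order at $\tau \to 0^+$. Each such term is bounded by $C y^{(\alpha-1)-(k-1)} = C y^{\alpha - k}$ using the hypothesis; reassembling gives $\|\partial_y^k \mathcal{P}_y f\|_\infty \le C y^{\alpha - k}$, and the norm equivalence follows by tracking the constants through the argument.

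The principal obstacle is the non-commutativity of $\partial_{x_i}$ and $x_i$ with the Poisson semigroup: the commutation introduces hyperbolic factors $\cosh(2\tau)$, $\sinh(2\tau)$ whose exponential growth at $\tau \to \infty$ must be balanced against the Gaussian factor $e^{-y^2/(4\tau)}$ in $\phi_y$, and whose vanishing $\cosh(2\tau) - 1 = O(\tau^2)$, $\sinh(2\tau) = O(\tau)$ at $\tau \to 0^+$ provides exactly the cancellation needed to match the target order $y^{\alpha-k}$. Controlling $\partial_y^{k-1}$ of the correction kernels requires the $L^\infty$-contractivity $\|e^{-\tau \mathcal{L}} g\|_\infty \le \|g\|_\infty$ and, for $\alpha$ beyond the base range $(2,3)$, likely an induction on $[\alpha]$ that invokes the already-established case at lower Hölder exponents to upgrade the available regularity of $\partial_{x_i}f$ and $x_i f$.
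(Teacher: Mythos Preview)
Your overall architecture matches the paper's: both directions hinge on the equation $\partial_y^2\mathcal{P}_y = \mathcal{L}\mathcal{P}_y$, on Lemma \ref{est2}, and on comparing $\mathcal{P}_y(\partial_{x_i}f)$ (resp.\ $\mathcal{P}_y(x_if)$) with $\partial_{x_i}\mathcal{P}_yf$ (resp.\ $x_i\mathcal{P}_yf$). The paper organises this via explicit kernel differentiation (Propositions \ref{derivent}, \ref{derivadas2}, \ref{porx} for $(\Rightarrow)$; Proposition \ref{bajada} for $(\Leftarrow)$), writing for instance
\[
\partial_y^2\mathcal{P}_y(\partial_{x_i}f)=\partial_{x_i}\partial_y^2\mathcal{P}_yf-\int_{\R^{n+1}}\partial_{x_i}\partial_y^2\mathcal{P}_y(\tau,x,z)\,f(t-\tau,x-z)\,d\tau\,dz,
\]
and controlling the second integral through Lemma \ref{lemma5B}(iii), which crucially uses the weighted hypothesis $|x|^\alpha f\in L^\infty$ supplied by Proposition \ref{taman}. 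You reach the same difference through the commutation rules $A_{\pm i}e^{-\tau\mathcal{L}}=e^{\mp 2\tau}e^{-\tau\mathcal{L}}A_{\pm i}$, which is a legitimate alternative viewpoint and is algebraically cleaner.

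There is, however, a genuine gap in how you propose to estimate the correction. Your claim that the exponential growth of $\cosh(2\tau)$, $\sinh(2\tau)$ is ``balanced against the Gaussian factor $e^{-y^2/(4\tau)}$ in $\phi_y$'' is wrong: that factor tends to $1$ as $\tau\to\infty$ and provides no damping there. The only decay available at large $\tau$ comes from the Mehler kernel itself, $\|e^{-\tau\mathcal{L}}g\|_\infty\le(\cosh 2\tau)^{-n/2}\|g\|_\infty$. If you split the correction as $(\cosh 2\tau-1)\,\partial_{x_i}e^{-\tau\mathcal{L}}f+\sinh(2\tau)\,x_i e^{-\tau\mathcal{L}}f$ and estimate each piece separately, the second behaves like $|x_i|\,e^{(2-n)\tau}$ for large $\tau$, which diverges when $n=1$ and is never uniform in $x$. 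The correction must therefore be estimated as a whole (it equals $e^{-\tau\mathcal{L}}(\partial_{x_i}f)-\partial_{x_i}e^{-\tau\mathcal{L}}f$, and \emph{these} two summands are each bounded), or else one reverts to the paper's route of estimating the explicit kernel integral via Lemma \ref{lemma5B}(iii). Either way, the weighted bound $|x|^\alpha f\in L^\infty$ from Proposition \ref{taman} is what supplies the missing $y^{\alpha}$ gain in the correction; your sketch never invokes it, and without it the base case $1<\alpha<3$ does not close. The induction on $[\alpha]$ you anticipate is exactly what the paper carries out for larger $\alpha$, but the base step needs this repair first.
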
 
For the reader's convenience, the proof of this Theorem \ref{teo15} will be divide in several steps.

 \begin{proposition}\label{derivent}
 	Suppose that $f\in	\Lambda^\alpha_{\mathcal{L}}$ with $\alpha>2$. Then,
 	$\displaystyle
 	\partial_tf\in 	\Lambda^{\alpha-2}_{\mathcal{L}}.
 	$
 \end{proposition}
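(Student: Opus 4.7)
The strategy is to reduce the claim to Lemma~\ref{est2}(2), exploiting the fact that $\partial_t$ commutes with the Poisson semigroup $\mathcal{P}_y$. Indeed, since the kernel in \eqref{solution} depends on $t$ only through the translation $t-\tau$, a change of variable $s=t-\tau$ followed by differentiation under the integral yields
$$\partial_t \mathcal{P}_y f(t,x)=\int_{\mathbb{R}^{n+1}}\partial_\tau\mathcal{P}_y(\tau,x,z)\,f(t-\tau,x-z)\,dz\,d\tau,$$
which is pointwise well defined for every $f\in L^\infty(\mathbb{R}^{n+1})$ and $y>0$. Setting $k=[\alpha]+1$ and $k'=[\alpha-2]+1$, one checks that $k'=k-2$ in both the integer and non-integer cases for $\alpha>2$; consequently, Lemma~\ref{est2}(2) applied with $m=k'$ gives
$$\|\partial_y^{k'}\partial_t\mathcal{P}_y f\|_\infty\le C\,y^{-(k'+2)+\alpha}=C\,y^{-k'+(\alpha-2)},$$
which is precisely the decay required by Definition~\ref{ZygmundPar} for the class $\Lambda^{\alpha-2}_{\mathcal{L}}$. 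What remains is to identify $\partial_t f$ as a bounded function satisfying $\mathcal{P}_y(\partial_t f)=\partial_t\mathcal{P}_y f$.

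I would define $\partial_t f$ as the uniform limit $\lim_{y\to 0^+}\partial_t\mathcal{P}_y f$ in $L^\infty(\mathbb{R}^{n+1})$. To justify this, start from $\|\partial_y^{k-2}\partial_t\mathcal{P}_y f\|_\infty\le C\,y^{\alpha-k}$ (Lemma~\ref{est2}(2)) and iterate
$$\partial_y^{m-1}\partial_t\mathcal{P}_y f=\partial_y^{m-1}\partial_t\mathcal{P}_1 f-\int_y^1 \partial_{y'}^{m}\partial_t\mathcal{P}_{y'}f\,dy'$$
from $m=k-2$ downward. Each integration raises the exponent $\alpha-m-2$ by one, and since $\alpha-k\in(-1,0]$, the primitive is integrable at $0$ after one step (logarithmic factors that appear when $\alpha$ is an integer are absorbed at the next step). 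In particular $\|\partial_t\mathcal{P}_y f\|_\infty\le C\|f\|_{\Lambda^\alpha_{\mathcal{L}}}$ uniformly in $y\in(0,1]$, and $\|\partial_y\partial_t\mathcal{P}_y f\|_\infty$ is integrable near $0$, so $\{\partial_t\mathcal{P}_y f\}_{y>0}$ is Cauchy in $L^\infty$ as $y\to 0^+$. This produces the desired bounded function $\partial_t f$.

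To verify the commutation $\mathcal{P}_y(\partial_t f)=\partial_t\mathcal{P}_y f$, use the semigroup property $\mathcal{P}_y(\partial_t\mathcal{P}_{y_1}f)=\partial_t\mathcal{P}_{y+y_1}f$, valid because $\partial_t$ commutes with every $\mathcal{P}_z$ by the kernel argument of the first paragraph, and let $y_1\to 0^+$: the left-hand side converges to $\mathcal{P}_y(\partial_t f)$ by boundedness of $\mathcal{P}_y$ on $L^\infty$ and uniform convergence, while the right-hand side converges to $\partial_t\mathcal{P}_y f$ by continuity of $z\mapsto \partial_t\mathcal{P}_z f$ for $z>0$. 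Combined with the Lemma~\ref{est2}(2) estimate from the first paragraph, this gives $\partial_t f\in\Lambda^{\alpha-2}_{\mathcal{L}}$ with $\|\partial_t f\|_{\Lambda^{\alpha-2}_{\mathcal{L}}}\le C\|f\|_{\Lambda^\alpha_{\mathcal{L}}}$. The main obstacle is the iterated integration: Lemma~\ref{est2}(2) degenerates right at the boundary since $\alpha-k\in(-1,0]$, so careful bookkeeping---particularly in the integer $\alpha$ case where logarithmic factors arise---is needed to maintain the uniform bound and the Cauchy property down to $y=0^+$.
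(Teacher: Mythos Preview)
Your approach is essentially the same as the paper's: both rely on Lemma~\ref{est2}(2) for the core estimate, then integrate in $y$ to produce a uniform limit $\partial_t f$, and finally verify the commutation $\mathcal{P}_y(\partial_t f)=\partial_t\mathcal{P}_y f$. The only organizational difference is that for $\alpha\ge 3$ the paper invokes Corollary~\ref{encajadas2} to embed $f$ into some $\Lambda^\beta_{\mathcal{L}}$ with $2<\beta<3$, thereby reducing the existence-of-$\partial_t f$ step to the already-treated range (where a single integration suffices), and then applies Lemma~\ref{est2}(2) at the original $\alpha$; you instead iterate the integration $k'$ times directly, tracking the logarithm in the integer case. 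Both routes are correct.

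One small slip: you write $\alpha-k\in(-1,0]$, but in fact $\alpha-k=\alpha-[\alpha]-1$ lies in $(-1,0)$ for non-integer $\alpha$ and equals $-1$ when $\alpha\in\mathbb{N}$, so the correct range is $[-1,0)$. This does not affect your argument since you explicitly handle the logarithmic endpoint, but the stated interval should be corrected.
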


\begin{proof} Let $2 < \alpha < 3$, by Lemma \ref{est2} we have 
\begin{equation}\label{crucialt} \|\partial_y \partial_t \mathcal{P}_yf \|_\infty \le  \|f\|_{	\Lambda^\alpha_{\mathcal{L}}}y^{-3+\alpha}
\end{equation} If  $y<1$ we have  $
\partial_{t} \mathcal{P}_{y} f = \int_y^1  \partial_z \partial_t \mathcal{P}_zf dz + \partial_{t} \mathcal{P}_{y} f \Big|_{y=1}, $ this implies $\partial_{t} \mathcal{P}_{y} f$ is in $L^\infty(\mathbb{R}^{n+1}) $ uniformly on $y$. Moreover  since 
 $| \partial_{t} \mathcal{P}_{y'} f-  \partial_{t} \mathcal{P}_{y} f | \le \|f\|_{	\Lambda^\alpha_{\mathcal{L}}} \int_{y}^{y'} z^{-3+\alpha}dz \to 0$ as $(y',y) \to 0$, then   $ \partial_{t} \mathcal{P}_{y} f$ converges uniformly when $y\to 0. $  As $\mathcal{P}_y f$ converges uniformly to $f$ when $y\to 0$, we conclude that $\partial_{t}  f$ exists, it  is  the uniform limit of 
 $\partial_{t} \mathcal{P}_{y} f= \mathcal{P}_{y} \partial_t f.$ Hence 
 $\partial_y\mathcal{P}_{y} \partial_{t} f = \partial_y \partial_t\mathcal{P}_{y}  f. $ The last identity together with inequality \eqref{crucialt} implies  $\partial_t f \in 	\Lambda^{\alpha-2}_{\mathcal{L}}.$

If $\alpha \ge 3$,  by Corollary \ref{encajadas2}, the function $f\in\Lambda^{\beta}_{\LL}$ for some $\beta <1.$ Hence by the thoughts developed before, $\partial_t f $ exists and 
$\partial_{t} \mathcal{P}_{y} f= \mathcal{P}_{y} \partial_t f$. The  proof follows the lines of  the case $2<\alpha < 3$.	

	\end{proof}

 \begin{proposition}\label{derivadas2}
 	Suppose that $f\in\Lambda^{ \alpha}_{\LL}$ with $\alpha>1$. Then,
 	$ \displaystyle
 	\frac{\partial  f}{\partial x_i}\in 	\Lambda^{\alpha-1}_{\mathcal{L}}, \:\;i=1,\dots,n.
 	$
 \end{proposition}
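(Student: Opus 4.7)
The plan is to adapt the proof of Proposition~\ref{derivent} (which handled $\partial_t$) to the spatial derivative $\partial_{x_i}$. The main new complication is that, unlike $\partial_t$, the operator $\partial_{x_i}$ does not commute with $\mathcal{L}$ (one has $[\partial_{x_i},\mathcal{L}]=2x_i$), and hence does not commute with $\mathcal{P}_y$; this forces us to deal with a genuine commutator term.

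To prove $\partial_{x_i}f\in L^\infty(\mathbb{R}^{n+1})$: for $1<\alpha<2$, Lemma~\ref{est2}\,(1) with $\gamma=0$ and $m=j=1$ (where $\gamma+m+j=2=k=[\alpha]+1$) yields $\|\partial_y\partial_{x_i}\mathcal{P}_y f\|_\infty\leq C\|f\|_{\Lambda^\alpha_\mathcal{L}}\,y^{\alpha-2}$, which is integrable near $y=0$ thanks to $\alpha>1$. Hence $\partial_{x_i}\mathcal{P}_y f$ is uniformly Cauchy as $y\to 0^+$. Combined with the uniform convergence $\mathcal{P}_y f\to f$ (obtained from Corollary~\ref{encajadas2} plus Proposition~\ref{convergencia2}), this forces $\partial_{x_i}f$ to exist classically, to be bounded, and to equal the uniform limit of $\partial_{x_i}\mathcal{P}_y f$. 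For $\alpha\geq 2$, I would first reduce to some $\beta\in(1,2)$ using Corollary~\ref{encajadas2} to obtain the existence of $\partial_{x_i}f$, and then carry out the decay estimates below with the original $\alpha$.

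For the decay $\|\partial_y^{k'}\mathcal{P}_y(\partial_{x_i}f)\|_\infty\leq Cy^{-k'+(\alpha-1)}$ with $k'=[\alpha-1]+1$, a case-check shows $k'+1=[\alpha]+1=k$ in all cases $\alpha>1$. An integration by parts in $z_i$ in the kernel representation of $\mathcal{P}_y$ then yields the decomposition $\mathcal{P}_y(\partial_{x_i}f)=\partial_{x_i}\mathcal{P}_y f-R_y f$, where
$$R_y f(t,x)=\int_{\mathbb{R}^{n+1}}(\partial_{x_i}\mathcal{P}_y)(\tau,x,z)\,f(t-\tau,x-z)\,dz\,d\tau,$$
and the derivative of the kernel is taken with $z$ held fixed. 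Lemma~\ref{est2}\,(1) with $(\gamma,m,j)=(0,k',1)$ immediately handles the first summand: $\|\partial_y^{k'}\partial_{x_i}\mathcal{P}_y f\|_\infty\leq Cy^{-(k'+1)+\alpha}=Cy^{-k'+(\alpha-1)}$.

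The main obstacle is establishing the same decay for $\partial_y^{k'}R_y f$. Direct differentiation of the Mehler-type kernel in \eqref{solution} gives $\partial_{x_i}\mathcal{P}_y(\tau,x,z)=-(2x_i-z_i)\tanh\tau\cdot\mathcal{P}_y(\tau,x,z)$. Splitting $2x_i-z_i=(x_i-z_i)+x_i$, the piece $(x_i-z_i)$ is absorbed by $f(t-\tau,x-z)$ through the moment bound $(1+|\cdot|)^\alpha f\in L^\infty$ of Proposition~\ref{taman} (valid since $\alpha>1$), whereas the isolated $x_i$-piece is controlled by using that the Gaussian factor $e^{-|z|^2\coth\tau/4}$ concentrates $z$ near $0$ for small $\tau$, effectively enforcing $|x-z|\sim|x|$ and therefore $|x_i\,f(t-\tau,x-z)|\lesssim|x|^{1-\alpha}$; the accompanying $\tanh\tau$ factor vanishes at $\tau=0$, compensating the extra $\tau$-singularities produced by the subsequent $\partial_y^{k'}$-differentiations. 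Performing these estimates along the same lines as Lemma~\ref{lemma5B} yields the desired bound for $\|\partial_y^{k'}R_y f\|_\infty$ and thus concludes the proof.
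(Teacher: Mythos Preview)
Your existence argument and the decomposition $\mathcal{P}_y(\partial_{x_i}f)=\partial_{x_i}\mathcal{P}_yf-R_yf$ are correct, and your treatment of the main term via Lemma~\ref{est2} is exactly what the paper does. The problem lies in your handling of the commutator $R_yf$ when $\alpha>3$.

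Your heuristic ``the Gaussian concentrates $z$ near $0$, forcing $|x-z|\sim|x|$ and hence $|x_if(t-\tau,x-z)|\lesssim|x|^{1-\alpha}$'' is not rigorous (it fails for $|z|$ comparable to $|x|$), and even its rigorous substitute---splitting $|2x-z|\le 2|x-z|+|z|$ and feeding one factor of $|x-z|$ into the moment bound $[f]_{M^1}$, exactly as in the proof of Lemma~\ref{lemma5B}(iii)---extracts from the kernel only a single factor of $\tanh\tau$. Carrying out the $\tau$-integration then gives
\[
\big\|\partial_y^{k'}R_yf\big\|_\infty \;\lesssim\; y^{-k'+2},
\]
and no better: the method cannot use a moment exponent above $1$ (the step $|2x-z|^{1-\alpha'}(\tanh\tau)^{(1-\alpha')/2}e^{-|2x-z|^2\tanh\tau/c}\le C$ requires $\alpha'\le 1$). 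For $y<1$ this bound is $\le C y^{-k'+(\alpha-1)}$ only when $\alpha\le 3$, so your argument closes in the range $1<\alpha\le 3$ (and there it coincides with the paper's proof), but it genuinely fails for $\alpha>3$.

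For $\alpha\ge 3$ the paper does \emph{not} attempt to bound $\partial_y^{k'}R_yf$ directly. Instead it uses the equation $\partial_y^2\mathcal{P}_y=\mathcal{L}\mathcal{P}_y$ to replace $\partial_y^4\mathcal{P}_y(\partial_{x_i}f)$ by $\partial_y^2(-\Delta_x+|x|^2+\partial_t)\mathcal{P}_y(\partial_{x_i}f)$, expands the spatial and temporal pieces, integrates by parts, and controls the resulting terms using the already-established facts that $\partial_{x_j}f$, $\partial_{x_j}^2f$, $\partial_t f$ belong to lower-order $\Lambda$-spaces (from Corollary~\ref{encajadas2}, Proposition~\ref{derivent}, and the case $1<\alpha<3$ just proved). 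This inductive reduction is the missing ingredient in your sketch; without it the commutator estimate does not yield the required power of $y$ once $\alpha>3$.
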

 
 \begin{proof}
 	
 	Let $1< \alpha <3$.  By  Lemma \ref{est2}   we have $\displaystyle\bigg\| \frac{\partial^{3} \mathcal{P}_{y}f}{\partial y^2\partial x_i}\bigg\|_\infty\leq C \|f\|_{\Lambda^{\alpha}_{\LL}} y^{-3+\alpha}.$ 
 	For $y<1$, an integration gives 
 	$$\Big| \frac{ \partial^2 \mathcal{P}_y f(t,x) }{\partial y \partial x_i}\Big| \le  C \|f\|_{\Lambda^{\alpha}_{\LL}}y^{-2+\alpha} +C \Big\| \frac{ \partial^2 \mathcal{P}_y f }{\partial y \partial x_i} \Big|_{y=1} \Big\|_\infty.$$
 	 We can proceed as in the proof Proposition \ref{derivent} and we get that  $\partial_{x_i}f$ does exist and  $\left\| \frac{\partial f}{\partial x_i}\right\|_\infty\le C .$
	 To prove that $\frac{\partial f}{\partial_{x_i}}\in 	\Lambda^{\alpha-1}_{\mathcal{L}},$ we shall see that $\| \partial^2_y \mathcal{P}_y(\partial_{x_i}f) \|_\infty\le C\|f\|_{\Lambda^{\alpha}_{\LL}} y^{-3+\alpha}$. Observe that
 	$$                                                                                       
 	\partial^2_y \mathcal{P}_y(\partial_{x_i}f)(t,x)=\partial_{x_i}\partial^2_y \mathcal{P}_yf(t,x)-\int_{\R^{n+1}}\partial_{x_i}\partial^2_y\mathcal{P}_y(\tau,x,z)f(t-\tau,x-z)d\tau dz =I+II.
 	$$
 	By Lemma \ref{est2} we have that $|I|\le  C \|f\|_{\Lambda^{\alpha}_{\LL}}y^{-3+\alpha}=C \|f\|_{\Lambda^{\alpha}_{\LL}}y^{-2+(\alpha-1)}$.  As $f\in \Lambda^{\alpha}_{\LL}$, $1<\alpha <3$, by Proposition \ref{taman} we know that  $|x|f\in L^\infty(\R^{n+1})$.
 	Hence, by Lemma \ref{lemma5B} (iii) we get  that $|II|\le C \|f\|_{\Lambda^{\alpha}_{\LL}}y ^{-3+\alpha}= C\|f\|_{\Lambda^{\alpha}_{\LL}} y^{-2+(\alpha-1)}.$
 	
 	Suppose now $3\le \alpha <5$.  By Corollary \ref{encajadas2}   $f\in \Lambda^{\beta}_{\LL}$ for all $\beta < 3$. Then, the result just proved says that  $\frac{\partial f}{\partial{x_i}}\in \Lambda^{\gamma}_{\LL}$, for all $ \gamma <2$ and 
	$\frac{\partial^2 f}{\partial{x^2_i}}\in \Lambda^{\delta}_{\LL}$, for all $ \delta <1$.  We shall see that $\|\partial_y^{4}\mathcal{P}_y(\partial_{x_i}f)\|_\infty\le C\|f\|_{\Lambda^{\alpha}_{\LL}}y^{-4+(\alpha-1)}.$  As $\mathcal{P}_y(\partial_{x_i}f)$ satisfies \eqref{EqPoisson}, it is enough to prove that $\|\partial_y^{2}( -\sum_{j=1}^n \partial^2_{x_j}+|x|^2 +\partial_t)\mathcal{P}_y(\partial_{x_i}f)\|_\infty\le C\|f\|_{\Lambda^{\alpha}_{\LL}}y^{-5+\alpha}.$ 
 	
 Observe that  
 	\begin{align*}
 	\partial_y^{2}\partial_{x_j}^2&\mathcal{P}_y(\partial_{x_i}f)(t,x)=\partial_y^{2}\partial_{x_j}^2\partial_{x_i}\mathcal{P}_yf(t,x)-\partial_{x_j}^2\int_{\R^{n+1}}\partial_y^{2}\partial_{x_i}\mathcal{P}_y(\tau,x,z)f(t-\tau,x-z)d\tau dz \\
 	&=\partial_y^{2}\partial_{x_j}^2\partial_{x_i}\mathcal{P}_yf(t,x)-\int_{\R^{n+1}}\partial_y^{2}\partial_{x_j}^2\partial_{x_i}\mathcal{P}_y(\tau,x,z)f(t-\tau,x-z)d\tau dz \\&-2\int_{\R^{n+1}}\partial_y^{2}\partial_{x_i}\partial_{x_j}\mathcal{P}_y(\tau,x,z)\partial_{x_j}f(t-\tau,x-z)d\tau dz 
 	\\&-\int_{\R^{n+1}}\partial_y^{2}\partial_{x_i}\mathcal{P}_y(\tau,x,z)\partial_{x_j}^2f(t-\tau,x-z)d\tau dz.
 	\end{align*}
 	The first summand is bounded by $C\|f\|_{\Lambda^{\alpha}_{\LL}}y^{-5+\alpha}$ because of Lemma \ref{est2}. As $f$ and $\partial_{x_i}f$ are bounded functions, by using Lemma \ref{lemma5B} (ii) we get the desired boundedness for the second and third summand. Finally Lemma \ref{lemma5B} (iii)  says that the forth summand is bounded by $C y^{-(1-\nu+s)}$, where $\nu <1$ and  $s>0$, then by choosing $\nu$ and $s$ with $s-\nu = 4 - \alpha$ we get the estimate.
	
 On the other hand,  by using Lemma \ref{lemma5B} (ii) and (iii) together with the facts that $f, {\partial_{x_i}f}\in L^\infty(\R^{n+1})$ and $\frac{\partial^2 f}{\partial{x^2_i}}	 \in \Lambda^{\beta-2}_{\LL}$, we get the desired estimate in this case.

  To prove that $\||\cdot|^{2}\partial^{2}_y\mathcal{P}_y(\partial_{x_i}f)\|_\infty\le C\|f\|_{\Lambda^{\alpha}_{\LL}}y^{-5+\alpha}$,  we write
 	$$
 	|x|^2\partial_y^{2}\mathcal{P}_y(\partial_{x_i}f)(t,x)=|x|^2\partial_{x_i}\partial_y^{2}\mathcal{P}_yf(t,x)-|x|^2\int_{\R^{n+1}}\partial_{x_i}\partial_y^{2} \mathcal{P}_y(\tau,x,z)f(t-\tau,x-z)d\tau dz
 	$$
 	By Lemma \ref{est2} we know that the first summand is bounded by $C\|f\|_{\Lambda^{\alpha}_{\LL}}y^{-5+\alpha}$. For the second summand we have
\begin{multline*}
 	|x|^2\int_{\R^{n+1}}|\partial_{x_i}\partial_y^{2} \mathcal{P}_y(\tau, x,z)f(t-\tau, x-z)|d\tau dz \\ \le C \int_{\R^{n+1}}|\partial_{x_i}\partial_y^{2} \mathcal{P}_y(\tau,x,z)|(|x-z|^2+|z|^2)|f(t-\tau, x-z)|d\tau dz,
\end{multline*}
 	and by Lemma \ref{lemma5B} (iii) applied to $|x|^2f$ and Lemma \ref{lemma5B} (ii) we get the desired bound $C\|f\|_{\Lambda^{\alpha}_{\mathcal{L}}}y^{-5+\alpha}$.

 	To get the estimate for $\|\partial_y^2\partial_t\mathcal{P}_y(\partial_{x_i}f)\|_\infty$, we write
 	\begin{align*}
 	\partial_y^2\partial_t\mathcal{P}_y(\partial_{x_i}f)(t,x)&=\partial_y^2\partial_{x_i}\partial_t\mathcal{P}_yf(t,x)-\int_{\R^{n+1}}\partial_{x_i}\partial_y^2\mathcal{P}_y(\tau,x,z)\partial_tf(t-\tau,x-z)d\tau dz.
 	\end{align*}
   By Proposition \ref{derivent} we know that $\partial_tf\in  	\Lambda^{\alpha-2}_{\mathcal{L}}$, $1\le \alpha-2<3$. Hence, as $\partial_y^2\partial_{x_i}\partial_t\mathcal{P}_yf(t,x)=\partial_y^2\partial_{x_i}\mathcal{P}_y(\partial_tf)(t,x)$,  by applying Lemma \ref{est2} $(1)$ we get that the first summand is bounded by $C\|f\|_{\Lambda^{\alpha}_{\mathcal{L}}}y^{-5+\alpha}$, and by Lemma \ref{lemma5B} (iii) applied to $\partial_tf$ we get the same bound for the second summand.
 
 	The rest of the cases, $2m+1 \le \alpha < 2m+3,$ can be handled analogously by estimating the norms  $\|\partial_y^{2}(- \sum_j \partial^2_{x_j}+|x|^2+\partial_t )^m\mathcal{P}_y(\partial_{x_i}f)\|_\infty.$ We leave the details to the reader.
 \end{proof}
 
 \begin{proposition}\label{porx}
 Suppose that $f\in\Lambda^{ \alpha}_{\LL}$ with $\alpha>1$. Then,
 $ \displaystyle
  x_if\in 	\Lambda^{\alpha-1}_{\mathcal{L}}, \:\;i=1,\dots,n.
 $
 \end{proposition}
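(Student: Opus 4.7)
The plan is to establish, for $k=[\alpha-1]+1$, the estimate
\begin{equation*}
\|\partial_y^k \mathcal{P}_y(x_i f)\|_{L^\infty(\mathbb{R}^{n+1})} \leq C\|f\|_{\Lambda^\alpha_{\LL}}\, y^{-k+(\alpha-1)},
\end{equation*}
together with $\|x_i f\|_\infty \leq C\|f\|_{\Lambda^\alpha_{\LL}}$, which is an immediate consequence of $|x|^\alpha f\in L^\infty(\mathbb{R}^{n+1})$ (Proposition \ref{taman}) and $\alpha>1$. The starting point is the algebraic identity coming from $(x-z)_i=x_i-z_i$ applied to \eqref{solution}:
\begin{equation*}
\mathcal{P}_y(x_i f)(t,x)=x_i\,\mathcal{P}_y f(t,x)-\int_{\mathbb{R}^{n+1}} z_i\,\mathcal{P}_y(\tau,x,z)\,f(t-\tau,x-z)\,dz\,d\tau.
\end{equation*}
After $\partial_y^k$, the first summand is estimated directly by Lemma \ref{est2}(1), with $\gamma=1$, $m=k$, $j=0$, which yields $|x_i\,\partial_y^k\mathcal{P}_y f(t,x)|\leq C\|f\|_{\Lambda^\alpha_{\LL}}\,y^{-(k+1)+\alpha}=C\|f\|_{\Lambda^\alpha_{\LL}}\,y^{-k+(\alpha-1)}$, exactly the target order. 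The main work is to estimate the remaining commutator-type integral.

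For the second term, the strategy is to eliminate the $z_i$ factor in favour of a $\partial_{z_i}$-derivative of the kernel via the identity
\begin{equation*}
z_i\,\mathcal{P}_y(\tau,x,z)=-2\tanh\tau\,\partial_{z_i}\mathcal{P}_y(\tau,x,z)+(2x-z)_i\,\tanh^2\tau\,\mathcal{P}_y(\tau,x,z),
\end{equation*}
obtained by differentiating $e^{-\frac{|z|^2}{4}\coth\tau}e^{-\frac{|2x-z|^2}{4}\tanh\tau}$ in $z_i$ and solving. Integration by parts in $z$ on the first piece transfers the derivative onto $f$, producing $\partial_{x_i}f$, which lies in $\Lambda^{\alpha-1}_{\LL}$ by Proposition \ref{derivadas2}; the resulting integral is then controlled by Lemma \ref{lemma5B}, with the factor $\tanh\tau$ providing extra decay for small $\tau$. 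The remaining piece, containing $(2x-z)_i\tanh^2\tau$, splits via $|2x-z|\leq 2|x|+|z|$ into an $|x|$-weighted part, treated with $|x|^\alpha f\in L^\infty$ (Proposition \ref{taman}) in the spirit of the analogous step in the proof of Proposition \ref{derivadas2}, and a $|z|$-weighted part, where the extra $z$-moment together with $\tanh^2\tau$ yields the required $y^{-k+(\alpha-1)}$ decay after applying the kernel estimates of Lemma \ref{lemma5B}.

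For $1<\alpha<3$ one takes $k=2$ and runs the argument directly, combining Lemmas \ref{est2} and \ref{lemma5B} with Propositions \ref{taman} and \ref{derivadas2}. For $\alpha\geq 3$ we proceed inductively on the interval containing $\alpha$: Corollary \ref{encajadas2} gives $f\in\Lambda^\beta_{\LL}$ for every $\beta<\alpha$, so the inductive hypothesis provides $x_i f\in\Lambda^{\beta-1}_{\LL}$. To upgrade to $\Lambda^{\alpha-1}_{\LL}$, we use Proposition \ref{PropEqPoisson} to trade pairs of $\partial_y$ for $\LL=-\Delta_x+|x|^2+\partial_t$; the commutator $[\LL,x_i]=-2\partial_{x_i}$ generated when pushing $x_i$ past $\LL$ is absorbed via the already-established Proposition \ref{derivadas2}, while the surviving $\LL$-pieces are controlled by iterating the base case together with Proposition \ref{derivent}. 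The principal obstacle is precisely this bookkeeping at higher $\alpha$: every shift of $x_i$ past $\LL$ generates a derivative-of-$f$ term whose regularity, given by the earlier propositions, must be tracked carefully to close the estimate at the exact order $\alpha-1$.
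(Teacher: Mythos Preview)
Your argument is correct, but the route in the base case differs from the paper's. The paper treats $1<\alpha<2$ and $2\le\alpha<3$ separately: for $1<\alpha<2$ it takes $k=1$ and bounds the commutator term $\int z_i\,\partial_y\mathcal{P}_y(\tau,x,z)\,f(t-\tau,x-z)\,dz\,d\tau$ directly by Lemma~\ref{lemma5B}(ii) with $\nu=1$, $k=1$ (so $\zeta=0$), obtaining $Cy^{-s}$ with $s=2-\alpha>0$; for $2\le\alpha<3$ this direct estimate no longer suffices, and the paper instead invokes the equation $\partial_y^2\mathcal{P}_y(x_if)=\LL\mathcal{P}_y(x_if)$ and bounds each of $\partial_t\mathcal{P}_y(x_if)$, $\partial_{x_j}^2\mathcal{P}_y(x_if)$ and $|x|^2\mathcal{P}_y(x_if)$ separately, using Proposition~\ref{derivent}, Proposition~\ref{derivadas2} and Lemma~\ref{lemma5B}. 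Your alternative---the identity $z_i\mathcal{P}_y=-2\tanh\tau\,\partial_{z_i}\mathcal{P}_y+(2x-z)_i\tanh^2\tau\,\mathcal{P}_y$ followed by integration by parts---handles $1<\alpha<3$ uniformly with $k=2$ and avoids bringing in the PDE this early. The price is that the $\tanh\tau$ and $\tanh^2\tau$ weights are not literally covered by Lemma~\ref{lemma5B}(ii) as stated; you need the obvious variant in which $\tanh\tau\sim\min(\tau,1)$ shifts the effective exponent of $\tau$ by one in the final one-dimensional integral, which is a routine modification of the proof of Lemma~\ref{lemma5B}. Note also that for Piece~1 you only use $\partial_{x_i}f\in L^\infty$, not the full $\Lambda^{\alpha-1}_{\LL}$ membership from Proposition~\ref{derivadas2}. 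For $\alpha\ge 3$ your inductive scheme via $\partial_y^2=\LL$ and the commutator $[\LL,x_i]=-2\partial_{x_i}$ is essentially the same iteration the paper indicates when it refers to estimating $\|(-\sum_j\partial_{x_j}^2+|x|^2+\partial_t)^{m+1}\mathcal{P}_y(x_if)\|_\infty$.
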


 \begin{proof}
 	
 	Consider the case $1<\alpha<2$. By Proposition  \ref{taman} we know that  ${x_i}f\in L^\infty(\R^{n+1})$.  In addition, we can write
 	$$
 	\partial_y\mathcal{P}_y(x_if)(t,x)=x_i \partial_y\mathcal{P}_yf(t,x)-\int_{\R^{n+1}}z_i\mathcal{P}_y(\tau,x,z)f(t-\tau,x-z)d\tau dz,
 	$$
	
 	and by using Lemma \ref{est2} for the first summand  and Lemma \ref{lemma5B} together with the boundedness of $f$ for the second summand, we get  that $\|	\partial_y\mathcal{P}_y(x_if) \|_\infty\le C \|f\|_{\Lambda^{\alpha}_{\mathcal{L}}} y^{-2+\alpha}$.

 	Let $2\le \alpha <3$.  We have to prove that $\|\partial_y^{2} \mathcal{P}_y({x_i}f)\|_\infty \le C\|f\|_{\Lambda^{\alpha}_{\mathcal{L}}}y^{-3+\alpha}$. As $\mathcal{P}_y(x_if)$ satisfies \eqref{EqPoisson} we have 
 	\begin{align*}
 	\left\| {\partial_y^2 \mathcal{P}_y({x_i}f)} \right\|_\infty &= \left\|\Big[\partial_t- \sum_{j=1}^n(\partial_{x_j}^2-|x|^2)\Big]\mathcal{P}_y(x_if)\right\|_\infty \\ &\le \|\partial_t\mathcal{P}_y(x_if)\|_\infty+ \sum_{j=1}^n\| \partial_{x_j}^2\mathcal{P}_y(x_if)\|_\infty+\||\cdot|^2 \mathcal{P}_y(x_if) \|_\infty.
 	\end{align*}
As  $\partial_tf$ is well defined  and bounded, see  Proposition \ref{derivent},   
\begin{align*}
\partial_t \mathcal{P}_y(x_if)(t,x)&=\int_{\R^{n+1}}\mathcal{P}_y(\tau,x,z)(x_i-z_i)\partial_tf(t-\tau, x-z)d\tau dz\\&=x_i\mathcal{P}_y(\partial_tf)(t,x)-\int_{\R^{n+1}}z_i\mathcal{P}_y(\tau,x,z)\partial_tf(t-\tau, x-z)d\tau dz.
\end{align*}

 Therefore, by using  Proposition \ref{derivent} and Lemma \ref{est2} $(1)$  for $\partial_tf$, we get that the first summand is bounded by $C\|\partial_tf\|_{\Lambda^{ \alpha-2}_{\LL}}y^{-1+(\alpha-2)}\le C\|f\|_{\Lambda^{\alpha}_{\mathcal{L}}}y^{-3+\alpha}$. For  the second summand we use that $\partial_tf\in L^\infty(\R^{n+1})$ and Lemma \ref{lemma5B} (ii).

 To get the bound for $\||\partial_{x_j}^2 \mathcal{P}_y(x_if) \|_\infty$, for $j=1,\dots,n$. We can write, for every $j\in\{1,\dots,n\}$,
 	\begin{multline*}
 	\partial_{x_j}^2\mathcal{P}_y(x_if)(t,x)=\partial_{x_j}^2\int_{\R^{n+1}}\mathcal{P}_y(\tau,x,z) x_if(t-\tau,x-z)d\tau dz\\ \quad \quad  -\partial_{x_j}^2\int_{\R^{n+1}}\mathcal{P}_y(\tau,x,z) z_if(t-\tau,x-z)d\tau dz\\
 	=x_i\partial_{x_j}^2\mathcal{P}_yf(t,x)+2 \delta_{i,j}\int_{\R^{n+1}}\partial_{x_j}(\mathcal{P}_y(\tau,x,z)f(t-\tau,x-z))d\tau dz \\  -\int_{\R^{n+1}}\partial_{x_j}^2(\mathcal{P}_y(\tau, x,z)z_i)f(t-\tau,x-z)d\tau dz  \\ -2\int_{\R^{n+1}}\partial_{z_i}(\partial_{x_j}\mathcal{P}_y(\tau,x,z)z_i)f(t-\tau,x-z)d\tau dz \\ -\int_{\R^{n+1}}\partial_{z_j}(\mathcal{P}_y(\tau,x,z)z_i)\partial_{x_j}f(t-\tau,x-z)d\tau dz,
 	\end{multline*}
 	where  $\delta_{i,j}=1$ if $i=j$ and $\delta_{i,j}=0$ if $i\neq j$. Observe that in the last summand we have used integration by parts.
 	As $f\in \Lambda^{\alpha}_{\LL}$, by  Lemma \ref{est2} $(1)$ we get that the first summand is bounded by  $C\|f\|_{\Lambda^{\alpha}_{\mathcal{L}}}y^{-3+\alpha}$.  For the rest of summands we can apply  Lemma \ref{lemma5B} (ii) since $f$ and $\partial_{x_i} f $ are bounded functions.
 	 	
In remains  the case $\||\cdot|^2 \mathcal{P}_y(x_if) \|_\infty.$ Observe that, 
 	\begin{multline*}
 	|x|^2|\mathcal{P}_y(x_if)(t,x)|  \le C \int_{\R^{n+1}}(|x-z|^2+|z|^2)|\mathcal{P}_y(\tau,x,z)||x_i-z_i||f(t-\tau,x-z)|d\tau dz \\ \le C \int_{\R^{n+1}}|x-z|^{3-\alpha}|\mathcal{P}_y(\tau,x,z)||x-z|^\alpha|f(t-\tau,x-z)|d\tau dz\\+ C \int_{\R^n}|z|^2|\mathcal{P}_y(\tau,x,z)||x-z||f(t-\tau,x-z)|d\tau dz\\
 	\le C(\||x|^\alpha f\|_\infty+\||x|f\|_\infty)\int_{\R^{n+1}}(|x|^{3-\alpha}+|z|^{3-\alpha}+|z|^2)|\mathcal{P}_y(\tau,x,z)|d\tau dz\le C\|f\|_{\Lambda^{\alpha}_{\LL}}y^{-3+\alpha}.
 	\end{multline*}
	In the last inequality  we have used Lemma \ref{lemma5B} (ii).
 It remains  	 	
 	 	For the cases $2m+1\le \alpha<2m+3$, with $m\ge 1$,  we get the result by following the same kind of reasonings, that is, by estimating the norms  $\|(- \sum_j \partial^2_{x_j}+|x|^2+\partial_t )^{m+1}\mathcal{P}_y({x_i}f)\|_\infty$. We leave  the details for the interested reader.

 \end{proof} 
 \begin{proposition}\label{bajada}  
 	Let $\alpha>2$ and $f\in L^\infty(\R^{n+1})$ and suppose that $\partial_{x_i}f,x_if\in 	\Lambda^{\alpha-1}_{\mathcal{L}}$,  $i=1,\dots,n$, and $\partial_tf\in 	\Lambda^{\alpha-2}_{\mathcal{L}}$. Then  $f\in 	\Lambda^{\alpha}_{\mathcal{L}}$. 
 \end{proposition}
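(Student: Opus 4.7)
The plan is to verify the defining estimate $\|\partial_y^k \mathcal{P}_y f\|_{L^\infty(\mathbb{R}^{n+1})} \le C y^{-k+\alpha}$ with $k = [\alpha]+1$. My strategy is to reduce this to a single application of the definition of $\Lambda^{\alpha-2}_{\mathcal{L}}$, once I have shown that $\mathcal{L} f$ itself lies in that class. The Poisson equation in Proposition \ref{PropEqPoisson} then trades two $y$-derivatives for one application of $\mathcal{L}$.

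The first step is to prove that $\mathcal{L} f = \partial_t f + \sum_{i=1}^n (x_i^2 f - \partial_{x_i}^2 f) \in \Lambda^{\alpha-2}_{\mathcal{L}}$. Since $\partial_{x_i} f \in \Lambda^{\alpha-1}_{\mathcal{L}}$ and $\alpha - 1 > 1$, I apply Proposition \ref{derivadas2} to $g = \partial_{x_i} f$, obtaining $\partial_{x_j}(\partial_{x_i} f) \in \Lambda^{\alpha-2}_{\mathcal{L}}$ for every $j$; in particular the second derivative $\partial_{x_i}^2 f$ exists as a classical bounded function in $\Lambda^{\alpha-2}_{\mathcal{L}}$. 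Analogously, $x_i f \in \Lambda^{\alpha-1}_{\mathcal{L}}$ together with Proposition \ref{porx} yields $x_j(x_i f) \in \Lambda^{\alpha-2}_{\mathcal{L}}$, so $x_i^2 f \in \Lambda^{\alpha-2}_{\mathcal{L}}$. Combined with the hypothesis $\partial_t f \in \Lambda^{\alpha-2}_{\mathcal{L}}$, this places every summand of $\mathcal{L} f$ in $\Lambda^{\alpha-2}_{\mathcal{L}}$.

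The second step combines the Poisson equation $\partial_y^2 \mathcal{P}_y f = \mathcal{L}\mathcal{P}_y f$ from Proposition \ref{PropEqPoisson} with the commutation $\mathcal{L}\mathcal{P}_y f = \mathcal{P}_y(\mathcal{L} f)$, which follows from the subordination formula \eqref{Poisson(n+1)} and the commutation of $\mathcal{L}$ with the heat semigroup $e^{-\tau\mathcal{L}}$ on functions for which $\mathcal{L} f$ is well defined:
\[
\partial_y^k \mathcal{P}_y f \;=\; \partial_y^{k-2}\mathcal{L}\mathcal{P}_y f \;=\; \partial_y^{k-2}\mathcal{P}_y(\mathcal{L} f).
\]
Since $k - 2 = [\alpha]-1 = [\alpha-2]+1$ is exactly the natural order of differentiation for $\Lambda^{\alpha-2}_{\mathcal{L}}$, the definition of this class gives
\[
\|\partial_y^{k-2}\mathcal{P}_y(\mathcal{L} f)\|_{L^\infty(\mathbb{R}^{n+1})} \;\le\; C\|\mathcal{L} f\|_{\Lambda^{\alpha-2}_{\mathcal{L}}}\, y^{-(k-2)+(\alpha-2)} \;=\; C y^{-k+\alpha},
\]
which is the required bound.

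The main technical obstacle is rigorously justifying the commutation $\mathcal{L}\mathcal{P}_y f = \mathcal{P}_y(\mathcal{L} f)$ at the $L^\infty$ level, since $f$ is only bounded and $\mathcal{L} f$ arises from distributional differentiation promoted to a classical function via Propositions \ref{derivadas2} and \ref{porx}. This step can be carried out by a dominated-convergence argument on the subordination integral, using the kernel estimates of Lemma \ref{lemma5B} to pull each of $\partial_t f$, $\partial_{x_i}^2 f$, and $x_i^2 f$ through the Poisson integral representation.
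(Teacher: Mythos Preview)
Your argument is correct and takes a genuinely different route from the paper. The paper works directly on $\mathcal{P}_y f$: for $2<\alpha<4$ it writes $\partial_y^4\mathcal{P}_y f=(\partial_t-\sum_j\partial_{x_j}^2+|x|^2)^2\mathcal{P}_y f$ and estimates each of the seven resulting terms $\|\partial_{x_j}^4\mathcal{P}_y f\|_\infty$, $\||x|^4\mathcal{P}_y f\|_\infty$, $\|\partial_t^2\mathcal{P}_y f\|_\infty$, etc., one by one via integration by parts and the kernel bounds of Lemma~\ref{lemma5B}. Your approach instead first promotes the hypotheses to $\mathcal{L}f\in\Lambda^{\alpha-2}_{\mathcal{L}}$ by applying the already-proved Propositions~\ref{derivadas2} and~\ref{porx} to $\partial_{x_i}f$ and $x_if$, and then uses the single identity $\partial_y^k\mathcal{P}_y f=\partial_y^{k-2}\mathcal{P}_y(\mathcal{L}f)$ to finish in one stroke. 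This is considerably shorter and cleanly separates the algebra (showing $\mathcal{L}f$ sits in the right space) from the analysis (one application of the definition). The paper's computation, on the other hand, is more self-contained: it never invokes the global commutation $\mathcal{L}\mathcal{P}_y=\mathcal{P}_y\mathcal{L}$, only the trivial $\partial_t\mathcal{P}_y=\mathcal{P}_y\partial_t$, and makes explicit which kernel estimates are actually needed. Your acknowledged ``main technical obstacle'' is real but harmless here: the commutation follows from $e^{-\tau\mathcal{L}}(\mathcal{L}f)=-\partial_\tau e^{-\tau\mathcal{L}}f=\mathcal{L}e^{-\tau\mathcal{L}}f$ together with the subordination formula and the differentiation-under-the-integral justified in the proof of Proposition~\ref{PropEqPoisson}.
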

 \begin{proof}

 Consider the case $2\le \alpha<4$. We want to see that $\|\partial_y^4\mathcal{P}_yf\|_\infty \le  C y^{-4+\alpha}$, and as $\mathcal{P}_yf$ satisfies \eqref{EqPoisson}, we have that
 	$\partial_y^4\mathcal{P}_yf(t,x)=\left(\partial_t- \sum_{j=1}^n\partial_{x_j}^2+ |x|^2\right)^2 \mathcal{P}_yf(t,x)$. Hence it is sufficient to prove that
 	\begin{enumerate}
 		\item [a)] $\|\partial_{x_j}^4\mathcal{P}_yf\|_\infty \le  C y^{-4+\alpha}$,
 		\item [b)]  $\|\partial_{x_j}^2(|x|^2\mathcal{P}_yf)\|_\infty \le  C y^{-4+\alpha}$,
 		\item [c)]  $\||x|^2\partial_{x_j}^2\mathcal{P}_yf\|_\infty \le  C y^{-4+\alpha}$, $j\in\{1,\dots,n\}$, and 
 		\item [d)]  $\||x|^4\mathcal{P}_yf\|_\infty \le  C y^{-4+\alpha}$.
 			\item [e)]  $\||\partial_t^2\mathcal{P}_yf\|_\infty \le  C y^{-4+\alpha}$.
 				\item [f)]  $\|\partial_t|x|^2\mathcal{P}_yf\|_\infty \le  C y^{-4+\alpha}$.
 					\item [g)] $\||\partial_t\partial_{x_j}^2\mathcal{P}_yf\|_\infty \le  C y^{-4+\alpha}$.
 			 	\end{enumerate}
Integration by parts gives 
 	\begin{align*}
 	\partial_{x_j}^4\mathcal{P}_yf(t,x) &=\partial_{x_j}^3\int_{\R^{n+1}}\mathcal{P}_y(\tau,x,z)\partial_{x_j}f(t-\tau,x-z)d\tau dz  \\ & + \int_{\R^{n+1}}\partial_{x_j}^3(\partial_{x_j}+ \partial_{z_j})\mathcal{P}_y(\tau,x,z)f(t-\tau,x-z)d\tau dz\\&+2\int_{\R^{n+1}}\partial_{x_j}^2(\partial_{x_j} +\partial_{z_j}) \mathcal{P}_y(\tau,x,z)\partial_{x_j}f(t-\tau,x-z)d\tau dz
 	\\&+\int_{\R^{n+1}}\partial_{x_j}(\partial_{x_j} + \partial_{z_j}) \mathcal{P}_y(\tau,x,z)\partial_{x_j}^2f(t-\tau,x-z)d\tau dz. 	\end{align*}
As $\partial_{x_j}f\in	\Lambda^{\alpha-1}_{\mathcal{L}}$, by Lemma \ref{est2} we get that the first summand is bounded by $C\|\partial_{x_j}f\|_{	\Lambda^{\alpha-1}_{\mathcal{L}}}y^{-4+\alpha}$. For the rest of the summands we apply Lemma  \ref{lemma5B} together of the boundedness of the functions $f,\partial_{x_j} f$ and $\partial^2_{x_j}f.$	
 	To prove b), we write 
 	\begin{align}\label{c)} 
 \partial_{x_j}^2&(|x|^2\mathcal{P}_yf)(t,x)=2\mathcal{P}_yf(t,x)+4\int_{\R^{n+1}}x_j\partial_{x_j}\mathcal{P}_y(\tau,x,z)f(t-\tau,x-z)d\tau dz \\  \nonumber &+4\int_{\R^{n+1}}\mathcal{P}_y(\tau,x,z)  x_j\partial_{x_j} f(t-\tau,x-z)d\tau dz \\ & \nonumber +\int_{\R^{n+1}}|x|\partial_{x_j}\mathcal{P}_y(\tau,x,z)|x|\partial_{x_j}f(t-\tau,x-z)d\tau dz \\  &+\int_{\R^{n+1}}|x|^2\partial^2_{x_j}\mathcal{P}_y(\tau,x,z)f(t-\tau,x-z)d\tau dz +|x|^2\partial_{x_j}\mathcal{P}_y(\partial_{x_j}f)(t,x).  \nonumber
 	\end{align}
 As the functions $f$ and $|x| \partial_{x_j} f$ are bounded, Lemma \ref{lemma5B} takes care of the first to forth summands.  	The bound of last summand in \eqref{c)} follows from the fact that $\partial_{x_j}f\in 	\Lambda^{\alpha-1}_{\mathcal{L}}$ and Lemma \ref{est2}.

   To see d), we use that $|x|^\alpha f\in L^\infty(\R^{n+1})$ and Lemma \ref{lemma5B} to get
 	\begin{align*}
 	|&|x|^4\mathcal{P}_yf(t,x)|\le C \int_{\R^{n+1}}\mathcal{P}_y(\tau,x,z)(|x-z|^{4-\alpha}|x-z|^\alpha+|z|^4)|f(t-\tau,x-z)|d\tau dz\\&\le C(\||x|^\alpha f\|_\infty +\|f\|_\infty)\int_{\R^{n+1}}\mathcal{P}_y(\tau,x,z)(|x|^{4-\alpha}+|z|^{4-\alpha}+|z|^4)d\tau dz\\ &\le  C(\||x|^\alpha f\|_\infty +\|f\|_\infty)y^{-4+\alpha}.
 	\end{align*}
 	Finally, for the estimates e)-g) observe that  	$$\|\partial_t^2\mathcal{P}_yf\|_\infty=\|\partial_t\mathcal{P}_y(\partial_tf)\|_\infty,  \|\partial_t\partial_{x_j}^2\mathcal{P}_yf\|_\infty=\|\partial_{x_j}^2\mathcal{P}_y(\partial_tf)\|_\infty,$$ and $\||x|^2\partial_t\mathcal{P}_yf\|_\infty=\||x|^2\mathcal{P}_y(\partial_tf)\|_\infty$.
 	Hence, by using that  $\partial_tf\in 	\Lambda^{\alpha-2}_{\mathcal{L}}$ and Lemma \ref{est2} $(1)$ we get the result.

 	For the rest of the  values of $\alpha$ we proceed analogously. We leave the details for the interested reader.
	This is the end of the proof of Propostion \ref{bajada}.
	\end{proof}
Propositions \ref{derivent}, \ref{derivadas2}, \ref{porx}, \ref{bajada} show the validity of Theorem  \ref{teo15}. Therefore we have proved Theorem \ref{teo151}, epigraph (2).

The proof of Theorem \ref{teo16} is now complete. As a consequence of it we get the following characterization of the spaces of Krylov's type introduced in Definition \ref{ParabolicHermite}.

\begin{theorem} Let $0<\alpha <3,$ $\alpha$ not an integer. 
  Then $$ C_{t,\mathcal{H}}^{\alpha/2,\alpha} =  {	\Lambda^\alpha_{\mathcal{L}}} ,$$ with equivalence of norms.

\end{theorem}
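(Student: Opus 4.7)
My plan is to proceed by case analysis on $\alpha\in(0,3)\setminus\{1,2\}$, leveraging the previously established recursive characterizations of both sides. The case $0<\alpha<1$ is exactly Theorem \ref{caracHolder2} and requires nothing further.

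For $2<\alpha<3$, the equivalence will be essentially automatic. Theorem \ref{teo151}(2) characterizes $f\in\Lambda^\alpha_{\mathcal{L}}$ by $A_iA_jf,\partial_tf\in\Lambda^{\alpha-2}_{\mathcal{L}}$ for all $i,j=\pm 1,\ldots,\pm n$, with equivalent norms, while Definition \ref{ParabolicHermite} for $2<\alpha<3$ characterizes $f\in C^{\alpha/2,\alpha}_{t,\mathcal{H}}$ by the same conditions but with $C^{(\alpha-2)/2,\alpha-2}_{t,\mathcal{H}}$ in place of $\Lambda^{\alpha-2}_{\mathcal{L}}$. Since $0<\alpha-2<1$, the base case identifies these two reduced spaces with equivalent norms, so the two characterizations coincide and the norm equivalence propagates up.

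The substantive step is the intermediate case $1<\alpha<2$. Here I use Theorem \ref{teo151}(1), which gives the second-difference form of $\Lambda^\alpha_{\mathcal{L}}$, to match with Definition \ref{ParabolicHermite}, whose conditions are $A_{\pm i}f\in C^{(\alpha-1)/2,\alpha-1}_{t,\mathcal{H}}$ together with uniform-in-$x$ H\"older-$\alpha/2$ regularity of $f$ in $t$. For the inclusion $\Lambda^\alpha_{\mathcal{L}}\subseteq C^{\alpha/2,\alpha}_{t,\mathcal{H}}$, Propositions \ref{derivadas2} and \ref{porx} yield $\partial_{x_i}f,\,x_if\in\Lambda^{\alpha-1}_{\mathcal{L}}$, hence $A_{\pm i}f\in\Lambda^{\alpha-1}_{\mathcal{L}}=C^{(\alpha-1)/2,\alpha-1}_{t,\mathcal{H}}$ by the base case; and setting $z=0$ in the second-difference bound of Theorem \ref{teo151}(1) gives $\|f(\cdot-\tau,\cdot)-f(\cdot,\cdot)\|_\infty\leq C|\tau|^{\alpha/2}$, the required H\"older-$\alpha/2$ regularity in $t$. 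For the converse, from $A_if+A_{-i}f=2x_if$ and $A_if-A_{-i}f=2\partial_{x_i}f$ I recover $x_if,\partial_{x_i}f\in C^{(\alpha-1)/2,\alpha-1}_{t,\mathcal{H}}$, which in particular gives $(1+|x|)^\alpha f\in L^\infty$ and H\"older-$(\alpha-1)$ regularity of $\nabla_x f$. To verify the second-difference bound, I split
\begin{align*}
f(t-\tau,x-z)&+f(t-\tau,x+z)-2f(t,x)\\
&=\bigl[f(t-\tau,x-z)+f(t-\tau,x+z)-2f(t-\tau,x)\bigr]+2\bigl[f(t-\tau,x)-f(t,x)\bigr];
\end{align*}
the time increment is bounded by $C|\tau|^{\alpha/2}$ by the uniform H\"older-$\alpha/2$ regularity, and the spatial second difference, written via the Fundamental Theorem of Calculus as $\int_0^1 z\cdot[\nabla_x f(t-\tau,x+sz)-\nabla_x f(t-\tau,x-sz)]\,ds$, is bounded by $C|z|^\alpha$ using the H\"older-$(\alpha-1)$ regularity of $\nabla_x f$. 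Summing and applying Theorem \ref{teo151}(1) then yields $f\in\Lambda^\alpha_{\mathcal{L}}$.

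The main obstacle is the converse in the case $1<\alpha<2$: one must reconstruct the full mixed space-time second-difference estimate purely from first-order information on $A_{\pm i}f$ plus a single-variable H\"older estimate in $t$. The split above succeeds because $\alpha<2$ permits a first-order integral representation of the spatial second difference, so no second-order regularity of $f$ in $x$ is needed; one also must carefully unpack the weighted bound on $A_{\pm i}f$ to extract the pointwise polynomial decay of $f$ itself. The remaining cases reduce cleanly to the base case via the recursions already in hand.
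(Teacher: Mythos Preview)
Your proposal is correct and follows essentially the same route as the paper: the case $0<\alpha<1$ is Theorem~\ref{caracHolder2}; for $1<\alpha<2$ the paper also uses Propositions~\ref{derivadas2} and~\ref{porx} for the forward inclusion and, for the converse, the same splitting of the second difference into a spatial part (handled via the mean value theorem, equivalent to your integral form) and a time part; for $2<\alpha<3$ the paper likewise reduces to the base case through the recursive characterization (citing Theorem~\ref{teo15}, which is the content of Theorem~\ref{teo151}(2)). The only cosmetic differences are your use of the integral identity in place of the mean value theorem and your explicit recovery of $(1+|x|)^\alpha f\in L^\infty$ from $x_if\in C^{(\alpha-1)/2,\alpha-1}_{t,\mathcal{H}}$, which the paper states without elaboration.
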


\begin{proof}
	The case $0<\alpha<1$ was proved in Theorem \ref{caracHolder2}.
	Consider $1<\alpha<2$. Suppose that $f\in	\Lambda^\alpha_{\mathcal{L}}$. By Theorem \ref{teo151}(1) we know that \eqref{eq21} holds, an by taking $z=0$ in this inequality we get that $f(\cdot,x)\in C^{\alpha/2}(\R)$ uniformly on $x$. In addition, by Propositions \ref{derivadas2} and \ref{porx} we  have that $(\partial_{x_i}\pm x_i)f\in \Lambda^{\alpha-1}_{\mathcal{L}}=C^{\frac{\alpha-1}{2},\alpha-1}_{t,\mathcal{H}}$. Thus, we get that $f\in C^{\frac{\alpha}{2},\alpha}_{t,\mathcal{H}}$.
Conversely, suppose that $f\in C^{\frac{\alpha}{2},\alpha}_{t,\mathcal{H}}$. Then, we have that $(\partial_{x_i}\pm x_i)f(t,\cdot)\in C^{\alpha-1}_{\mathcal{H}}$
uniformly on $t$ and $f(\cdot,x)\in C^{\alpha/2}(\R)$ uniformly on $x$. Hence, $(1+|x|)^\alpha f\in L^{\infty}(\R^{n+1})$ and 
\begin{align*}
|f(t-\tau,x-z)&+f(t-\tau,x+z)-2f(t,x)|\\
&\le |f(t-\tau,x-z)+f(t-\tau,x+z)-2f(t-\tau,x)|+2|f(t-\tau,x)-f(t,x)|\\&\le C|\nabla_xf(t-\tau, x+\theta z)-\nabla_xf(t-\tau, x-\lambda z)||z|+C\tau^{\alpha/2}\\
&\le C|\theta+\lambda|^{\alpha-1}|z|^{\alpha-1}|z|+C \tau^{\alpha/2}\le C(\tau^{1/2}+|z|)^{\alpha}.
\end{align*}
By Theorem \ref{teo151} (1)  we conclude that $f\in	\Lambda^\alpha_{\mathcal{L}}$.  The case $2< \alpha$ is a Corollary of Theorem \ref{teo15}.
	\end{proof}

\subsection{Elliptic Hermite setting} \label{EllipticSeting3}	

\

 Again as in the case of subsections \ref{EllipticSeting} and \ref{EllipticSeting2} we handled the functions $g(x)$ and $f(t,x) = g(x).$ The considerations made in that Remarks, together with  Theorems \ref{teo151} and \ref{teo15} give the proof of Theorem \ref{teo16}.
	
	Moreover the following Theorem is also true.
	\begin{theorem}
			 If $\alpha>0$ is not an integer,  we have $C^{\alpha}_{\mathcal{H}}=\Lambda^\alpha_{\mathcal{H}}$.
	\end{theorem}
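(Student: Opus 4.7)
The plan is to argue by induction on $k = [\alpha]$. For the base case $k = 0$ (that is, $0 < \alpha < 1$), I would use the lift $f(t,x) := g(x)$: then $\mathcal{P}_y f(t,x) = P_y g(x)$, so $g \in \Lambda^\alpha_\HH$ is equivalent to $f \in \Lambda^\alpha_\LL$; in parallel, $g \in C^\alpha_\HH(\R^n)$ is equivalent to $f \in C^{\alpha/2,\alpha}_{t,\HH}$, since the parabolic H\"older condition reduces to the spatial one when $f$ is $t$-independent. Theorem~\ref{caracHolder2} would then close the base case, recovering the observation already made in Subsection~\ref{EllipticSeting2}.

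For the inductive step, I would fix a non-integer $\alpha > 1$ and set $k = [\alpha]$. The first tool is Theorem~\ref{teo16}(2), giving
\[
g \in \Lambda^\alpha_\HH \iff \partial_{x_i} g \in \Lambda^{\alpha-1}_\HH \text{ and } x_i g \in \Lambda^{\alpha-1}_\HH \text{ for every } i = 1, \dots, n.
\]
Non-integrality of $\alpha$ forces $\alpha - 1$ to be positive non-integer with $[\alpha-1] = k - 1$, so the inductive hypothesis applies and yields $\Lambda^{\alpha-1}_\HH = C^{\alpha-1}_\HH$. Rewriting via $A_i = \partial_{x_i} + x_i$ and $A_{-i} = -\partial_{x_i} + x_i$, and using that $C^{\alpha-1}_\HH$ is a linear space, I would restate the characterization as: $g \in \Lambda^\alpha_\HH$ if and only if $A_{\pm i} g \in C^{\alpha-1}_\HH$ for every $i = 1, \dots, n$.

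The remaining step is to unwind Definition~\ref{DefPablo} and match it with this iterated condition. Applying the definition of $C^{\alpha-1}_\HH$ to each $A_{\pm i} g$, taking the union over $|i| \in \{1,\dots,n\}$, and noting $\alpha - k = (\alpha-1) - (k-1)$, one recovers the $M^{\alpha - k}$-boundedness of every composition $A_{j_1}\cdots A_{j_m} g$ with $1 \le m \le k$ and the $C^{\alpha-k}_\HH$ H\"older condition on those with $m = k$. The only ingredient of Definition~\ref{DefPablo} not supplied directly is $[g]_{M^{\alpha - k}} < \infty$; this follows from the Hermite analogue of Proposition~\ref{taman} (which gives $(1+|x|)^\alpha g \in L^\infty$ whenever $g \in \Lambda^\alpha_\HH$) together with the trivial estimate $(1+|x|)^{\alpha - k} \le (1+|x|)^\alpha$.

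The main obstacle is this last paragraph: matching the combinatorics of the iterated $A_{\pm i}$ characterization with the intricate weighted-H\"older norm of Definition~\ref{DefPablo}. The structural point that keeps the induction running is that non-integrality of $\alpha$ is preserved under the shift $\alpha \mapsto \alpha - 1$, so the inductive hypothesis is available at every level.
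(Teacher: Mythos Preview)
Your proposal is correct and matches the paper's intent. The paper itself gives no explicit proof of this theorem---it simply states it immediately after deriving Theorem~\ref{teo16} via the elliptic lift $f(t,x)=g(x)$---so your inductive argument using Theorem~\ref{teo16}(2) is precisely the natural way to extract the result from the tools already developed, and your unwinding of Definition~\ref{DefPablo} into the recursive form ``$g\in C^\alpha_\HH$ iff $[g]_{M^{\alpha-[\alpha]}}<\infty$ and $A_{\pm i}g\in C^{\alpha-1}_\HH$ for all $i$'' is exactly what is needed.

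One small point worth making explicit: in the reverse direction (from $C^\alpha_\HH$ to $\Lambda^\alpha_\HH$) you need $g\in L^\infty(\R^n)$ before invoking Theorem~\ref{teo16}(2), but this is immediate from $[g]_{M^{\alpha-[\alpha]}}<\infty$, which is part of Definition~\ref{DefPablo}. Note also that your induction covers all non-integer $\alpha>0$, whereas the parabolic analogue in the paper is only written out for $0<\alpha<3$; your direct use of Theorem~\ref{teo16}(2) (stated for every $\alpha>1$) rather than a reduction to the parabolic theorem is what makes the argument go through in the full range.
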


\begin{remark} There exists a function $g \in \Lambda^1_{\HH}(\R)$, but so that  $\sup_{\{x: x\in [0,1], z\in [0,1]\}} |g(x+z)-g(x)| \le C z$ fails for all $C$. 

Consider the  functions $h$ and $\varphi$ as follows.  $h(x)= \sum_{k=1}^\infty 2^{-k} \cos^{2\pi  2^k x}$ and $\varphi$ is a positive differentiable  function, with continuous derivative, such that $\varphi(x) = 1$ when $ x\in [-3,3]$, and  for any $x$ there exist a constant $C$ with $(1+|x|) \varphi(x) \le C$ and $|\varphi' (x) | \le C.$ It is clear that $|h(x)| \le 1$, moreover  it can be checked, see \cite[Theorem 4.9]{Zygmund}, that 
$\|h(x+z)+h(x-z)-2h(x) \|_\infty \le A |z|.$ 

Now we choose the function $g(x) = h(x) \varphi(x)$, then by the properties of $h$ and $\varphi$ we have $|(1+|x|)g(x)| \le C$.  On the other hand by  the Mean Value Theorem we have 
\begin{align*}
\Big| g(x+z)&+g(x-z) -2g(x)\Big| \le  \Big| \,(h(x+z)+h(x-z)-2h(x) )\, \varphi(x+z)\Big| \\ &+ \Big|h(x-z)\,(\varphi(x-z)- \varphi(x+z))\Big|+ 2\Big| h(x)\, ( \varphi(x+z)-\varphi(x))\,\Big| \le C|z|.
\end{align*}
Now assume that $g$ satisfies $|g(x+z)-g(x)|\le C |z|$. Hence for $x,z\in [0,1]$ we would have $|h(x+z)-h(x)| \le C |z|$. But it is well know  that Weierstrass function doesn't satisfy Lipschitz condition, see \cite[Theorem 4.9]{Zygmund}.

\end{remark}

\section{Schauder and H\"older estimates}\label{fractional}

\begin{lemma}\label{Lpotencias} Let  $\alpha, \beta$ positive real numbers.
\begin{itemize}
\item[(a)] Let $0< 2 \beta < \alpha $ and $f \in \Lambda_{\mathcal{L}}^\alpha$ then we have $\LL^\beta f(t,x) \le C < \infty, \, (t,x) \in \mathbb{R}^{n+1}.$
\item[(b)]   For every $\beta>0$ and $f\in L^\infty(\R^{n+1})$ we have $\LL^{-\beta} f(t,x) \le C < \infty$, for all $(t,x) \in \mathbb{R}^{n+1}.$
\end{itemize}
\end{lemma}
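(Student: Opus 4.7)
Both statements amount to verifying absolute convergence of the defining integrals
\[
\LL^{-\beta}f = \frac{1}{\Gamma(2\beta)}\int_0^\infty \mathcal{P}_\tau f\,\tau^{2\beta-1}\,d\tau,\qquad \LL^{\beta}f = \frac{1}{c_{2\beta}}\int_0^\infty (\mathcal{P}_\tau-I)^{[2\beta]+1}f\,\tau^{-1-2\beta}\,d\tau,
\]
each of which I would split as $\int_0^1+\int_1^\infty$. The two key inputs are a uniform-in-$\tau$ $L^\infty$ bound with exponential tail decay for $\mathcal{P}_\tau$, and the derivative estimate $\|\partial_y^k\mathcal{P}_y f\|_\infty\le C\|f\|_{\Lambda^\alpha_{\LL}}y^{-k+\alpha}$ coming from Definition \ref{ZygmundPar} and Lemma \ref{'lemma16B'}.

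For (b) the kernel domination $\mathcal{P}_y(\tau,x,z)\le \Phi_y(\tau,x-z)$ recorded before Proposition \ref{PropEqPoisson}, together with the uniform-in-$y$ bound on $\|\Phi_y\|_{L^1(\R^{n+1})}$ (a straightforward scaling computation), gives $\|\mathcal{P}_\tau f\|_\infty\le C\|f\|_\infty$ for all $\tau>0$, so the near-$0$ piece converges because $2\beta>0$. For the tail I would use the explicit Mehler formula to compute $e^{-s\LL}1(t,x)=(\cosh 2s)^{-n/2}e^{-|x|^2\tanh 2s/2}\le Ce^{-ns}$, insert this into the subordination formula \eqref{Poisson(n+1)}, and apply the elementary inequality $\tau^2/4s+ns\ge \tau\sqrt{n}$ in the exponent to conclude $\|\mathcal{P}_\tau f\|_\infty\le C\|f\|_\infty e^{-c\tau}$ for some $c>0$; this makes $\int_1^\infty \tau^{2\beta-1}\|\mathcal{P}_\tau f\|_\infty\,d\tau$ convergent.

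For (a) set $k=[2\beta]+1$ and iterate $\mathcal{P}_\tau g-g=\int_0^\tau\partial_y\mathcal{P}_y g\,dy$ together with the semigroup property to obtain
\[
(\mathcal{P}_\tau-I)^k f(t,x)=\int_{[0,\tau]^k}\partial_y^k\mathcal{P}_y f(t,x)\Big|_{y=y_1+\cdots+y_k}\,dy_1\cdots dy_k.
\]
When $k>\alpha$, Lemma \ref{'lemma16B'} gives $|\partial_y^k\mathcal{P}_y f|\le C\|f\|_{\Lambda^\alpha_{\LL}}y^{-k+\alpha}$, and rescaling $y_i=\tau u_i$ turns the iterated integral into $C\|f\|_{\Lambda^\alpha_{\LL}}\tau^\alpha$ times a finite constant. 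When $k\le\alpha$, Corollary \ref{encajadas2} places $f$ in $\Lambda^{\alpha'}_{\LL}$ for any $\alpha'\in(2\beta,k)$, reducing to the previous case and yielding $|(\mathcal{P}_\tau-I)^k f|\le C\tau^{\alpha'}$. In both cases the resulting power of $\tau$ strictly beats $2\beta$, so the $(0,1)$ piece converges. On $(1,\infty)$ the binomial expansion $(\mathcal{P}_\tau-I)^k f=\sum_{j=0}^k\binom{k}{j}(-1)^{k-j}\mathcal{P}_{j\tau}f$, combined with the uniform $L^\infty$ bound from (b), gives $|(\mathcal{P}_\tau-I)^k f|\le 2^k\|f\|_\infty$, so the tail integral converges as $2\beta>0$.

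The step I expect to demand the most care is the case $k\le\alpha$ in part (a): the definition of $\Lambda^\alpha_{\LL}$ does not directly control $\partial_y^k \mathcal{P}_y f$ at this low order, so one must drop regularity via Corollary \ref{encajadas2} (or, equivalently, integrate a higher-order derivative estimate back $\lceil\alpha\rceil-k$ times) to access the bound that drives the $\tau^{\alpha'}$ estimate. The exponential tail decay needed in (b) is also unavailable from an abstract semigroup positivity argument since $\LL$ is not self-adjoint, and I anticipate having to lean on the explicit Mehler-kernel calculation above.
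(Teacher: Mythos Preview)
Your proof is correct and, for part (a), essentially identical to the paper's: the paper also writes $(\mathcal{P}_\nu-I)^{\ell}f$ as the iterated integral $\int_{[0,\nu]^\ell}\partial_y^\ell\mathcal{P}_y f|_{y=y_1+\cdots+y_\ell}\,dy_1\cdots dy_\ell$, reduces via Corollary~\ref{encajadas2} to the range $2\beta<\alpha<[2\beta]+1$ (your case split $k>\alpha$ versus $k\le\alpha$ is exactly this reduction made explicit), and handles the tail $\int_1^\infty$ by the uniform boundedness of $\mathcal{P}_\nu f$ and $f$.

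For part (b) the two arguments diverge slightly. The paper simply invokes Lemma~\ref{lemma5B}\,(ii) with $k=m=\ell=\gamma=\nu=0$ to get $\|\mathcal{P}_\tau f\|_\infty\le C\|f\|_\infty\,\tau^{-s}$ for any $s>0$, and choosing $s>2\beta$ makes $\int_1^\infty \tau^{2\beta-1-s}\,d\tau$ converge. Your route---computing $e^{-s\LL}1\le Ce^{-ns}$ from the Mehler formula (this is Lemma~\ref{'lemma4'}\,(1)) and feeding it through the subordination integral---yields the stronger conclusion of exponential decay, but requires one extra step you glossed over: the inequality $\tau^2/4s+ns\ge\tau\sqrt{n}$ alone leaves the divergent factor $\int_0^\infty s^{-3/2}\,ds$; you must first split off, say, $\tau^2/8s$ to keep integrability in $s$ and then apply AM--GM to the remainder. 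With that small adjustment your argument goes through. The paper's shortcut is cleaner here since polynomial decay already suffices.
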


\begin{proof} It suffices to consider the case  $2\beta <\alpha <[2\beta]+1=\ell.$   Then
\begin{align*}
\| (\mathcal{P}_\nu &f(t,x)-f(t,x))^{[2\beta]+1}\|_{L^\infty(\R^{n+1}) }=\left\| \int_0^\nu  \underbrace{\dots}_{\substack{\ell}} \int_0^\nu \partial_{y_1}\dots \partial_{y_\ell} \mathcal{P}_{y_1+\dots y_\ell}f(t,x) dy_\ell\dots dy_1
\right\|_\infty \\&\le C  \int_0^\nu  \underbrace{\dots}_{\substack{\ell}} \int_0^\nu (y_1+\dots y_\ell)^{-\ell+\alpha} dy_\ell  \dots dy_1
\le  C \nu^\alpha. \end{align*}
 Then, as $0< 2\beta< \alpha,$  and $f, P_\nu f \in L^\infty(\mathbb{R}^{n+1})$ we have
\begin{align}\label{eq31}
\LL^\beta f(t,x) & \le 
	  c_\beta \int_0^1 \frac{\nu^{\alpha}}{\nu^{1+2\beta}} d\nu + \int_1^\infty   \frac{1}{\nu^{1+2\beta}}  \le C < \infty.
	\end{align}

To prove $(b)$ we use the boundedness of $f$ for $\nu <1$ and Lemma \ref{lemma5B} (ii), with  $s_\beta>2\beta$ when $\nu>1$. Thus,
$$\LL^{-\beta} f(t,x)=\frac{1}{\Gamma(2\beta)}\int_0^\infty P_\nu f(t,x)\frac{d\nu}{\nu^{1-2\beta}}\le C_\beta\|f\|_\infty\left(\int_0^1\frac{d\nu}{\nu^{1-2\beta}}+\int_1^\infty \frac{d\nu}{\nu^{1+s_\beta-2\beta}} \right)\le C_\beta.
$$

\end{proof}

{\it  Proof of Theorem \ref{teoHolder}}.
Let $m= [\alpha-2\beta]+1$ and $\ell=[2\beta]+1$. Then, $m+\ell=[\alpha-2\beta]+1+[2\beta]+1 >\alpha-2\beta+2\beta=\alpha$, as $m+\ell\in \mathbb{N}$ we get $m+\ell \ge[\alpha]+1.$

Previous Lemma \ref{Lpotencias} and Fubini's Theorem allow us to write
\begin{align*}
\Big|\partial_y^m\mathcal{P}(\LL^\beta f) \Big|&= \Big|c_\beta \int_0^\infty \partial_y^m\mathcal{P}_y \Big( \int_0^\nu \underbrace{\dots}_{\substack{\ell=[2\beta]+1}} \int_0^\nu \partial_w^\ell\mathcal{P}_{w} |_{w= s_1+\dots+s_\ell} ds_1\dots ds_\ell \Big) \frac{d\nu}{\nu^{1+2\beta}} \Big|\\ &=
\Big|c_\beta \int_0^\infty \Big( \int_0^\nu \underbrace{\dots}_{\substack{\ell=[2\beta]+1}} \int_0^\nu \partial_w^{m+\ell}\mathcal{P}_{w} |_{w=y+ s_1+\dots+s_\ell} ds_1\dots ds_\ell \Big) \frac{d\nu}{\nu^{1+2\beta}}\Big|
\\ &\le
C_\beta \int_0^\infty \Big( \int_0^\nu \underbrace{\dots}_{\substack{\ell=[2\beta]+1}} \int_0^\nu  (y+s_1+\dots s_\ell)^{-(m+\ell) +\alpha} ds_1\dots ds_\ell \Big) \frac{d\nu}{\nu^{1+2\beta}} \\&=
C_\beta \int_0^y ( \dots )  \frac{d\nu}{\nu^{1+2\beta}}  + C_\beta \int_y^\infty  (\dots ) \frac{d\nu}{\nu^{1+2\beta}} =I +II,
\end{align*}
where in the last inequality we have used that  $m+\ell \ge [\alpha]+1 >\alpha$. Now we shall estimate $I $ and $II$.
\begin{align*} |I | &\le 
C_\beta y^{-m+\alpha} \int_0^y  \int_0^{\nu/y} \underbrace{\dots}_{\substack{\ell=[2\beta]+1}} \int_0^{\nu/y} (1+s_1+\dots s_\ell)^{-(m+\ell) +\alpha} ds_1\dots ds_\ell \frac{d\nu}{\nu^{1+2\beta}}  \\&\le 
C_\beta y^{-m+\alpha} \int_0^y   \Big(\frac{\nu}{y}\Big) ^\ell \frac{d\nu}{\nu^{1+2\beta}}  \le C_\beta y^{-m+\alpha-\ell} \int_0^y  \frac{d\nu}{\nu^{1+2\beta-\ell}}
\le C_\beta y^{-m+\alpha-2\beta}.
\end{align*}
Notice that in the last inequality we have used that $1+2\beta-\ell= 2\beta-[2\beta] <1.$ On the other hand,
\begin{align*}
|II|  &\le c_\beta \int_y^\infty  \Big(  (y+\nu)^{-m+\alpha} + y^{-m+\alpha} \Big) \frac{d\nu}{\nu^{1+2\beta}}.
\end{align*}
If $-m+\alpha \le 0$ we have $\displaystyle |II|  \le C  \int_y^\infty  y^{-m+\alpha}  \frac{d\nu}{\nu^{1+2\beta}} = Cy^{-m+\alpha-2\beta}$. While in the case $-m+\alpha > 0$, as $m -\alpha+2\beta+1  = [\alpha-2\beta] +1-\alpha+2\beta+1 >1$,  we get  $\displaystyle |II|  \le C  \int_y^\infty  \nu ^{-m+\alpha}  \frac{d\nu}{\nu^{1+2\beta}}  \le Cy^{-m+\alpha-2\beta}$.

\edproof

{\it Proof of Theorem \ref{teoSchau}.} 
Let $\ell = [\alpha+2\beta]+1 > [\alpha]+1 > \alpha$. Fubini Theorem together with Lemma \ref{Lpotencias} allow to get 
\begin{align*}\|\partial^\ell_y P_y (\mathcal{L}^{-\beta}f)(t,x&)\|_{L^\infty(\R^{n+1})} = \left\|\int_0^\infty \partial_y^\ell P_y P_\nu f(t,x) \frac{d\nu}{\nu^{1-2\beta}}\right\|_\infty \\ &\le
C\int_0^\infty (y+\nu)^{-\ell+\alpha}\frac{d\nu}{\nu^{1-2\beta}} \le C y^{-\ell+\alpha-2\beta}.  \end{align*}

For (b) we apply Lemma \ref{lemma5B} (ii), then for $\ell= [2\beta]+1$ we have  $|\partial_y^\ell P_y P_\nu f(t,x)| \le C \frac{\|f\|_{\infty}}{y^{\ell}}$. Then we can proceed as before.
\edproof

\vspace{0.5 cm}

{\it Proof of Theorem \ref{multiplicador}.} 
Let  $f\in L^\infty(\mathbb{R}^{n+1})$, by using Lemma \ref{lemma5B} (i) and (ii),  we have  \newline $ \displaystyle \Big| \int_0^\infty e^{-s\LL^{1/2}}f(t,x) a(s) ds \Big| \le C\int_0^\infty  \min(1, s^{-2}) ds \le C$. Moreover if  $f\in \Lambda^\alpha_\LL (\R^{n+1}), \, \alpha>0 $ and $\ell= [\alpha+1] +1 >\alpha+1 $, by Fubini's Theorem  we have 
\begin{align*}
\Big| \partial^\ell_y \mathcal{P}_y &\Big( \int_0^\infty \mathcal{P}_s f(t,x) a(s) ds \Big) \, \Big|=  \Big| \int_0^\infty \partial^\ell_w \mathcal{P}_w  f(t,x)\Big|_{w=y+s} a(s) ds \Big| \\&  \le C \Big| \int_0^\infty (y+s)^{-\ell +\alpha} ds \Big| \le  C y^{-\ell+\alpha+1}. 
\end{align*}
We have proved that the operator $f \longrightarrow \int_0^\infty e^{-s \LL^{1/2}} f a(s) ds$ maps $\Lambda^\alpha_{\LL} (\R^{n+1})$ into  $\Lambda^{\alpha+1}_{\LL} (\R^{n+1})$. Then Theorem \ref{teoHolder} gives the result.
\edproof

Finally the proof of Theorem \ref{Rieszm} is a direct consequence of Theorem \ref{teoSchau} and \ref{teo151}.
\subsection{Elliptic Hermite setting} \label{EllipticSeting4}

\

As we did in the previous Sections, we consider  $g(x)$ and  
$f(t,x) = g(x)$, then it can be easily checked that   $\HH^{\pm \beta} g(x) = \LL^{\pm \beta} f(t,x)$ and 
$m(\HH) = m(\LL)$. Hence Remarks \ref{EllipticSeting}, \ref{EllipticSeting2} and \ref{EllipticSeting2} show the Hermite's  version of Theorems \ref{teoHolder}, \ref{teoSchau}, \ref{multiplicador} and \ref{Rieszm}.

\section{Maximum and comparison principles.}\label{maximum}

\it{Proof of Theorem \ref{Maximump}. } Observe that $c_\beta >0$ for $[2\beta]+1$ odd and $c_\beta<0$ for $[2\beta]+1$ even. On the other hand as the kernel $P_\nu(\tau,x,z)$ is always positive we have $P_\nu f(t,x) \ge 0, \,  t\le t_0.$ 
If $0<\beta<1/2$,   $\displaystyle\LL^\beta f(t_0,x_0)=\frac{1}{c_\beta}\int_0^\infty P_\nu f (t_0,x_0) \frac{d\nu}{\nu^{1+2\beta}}$, then $\LL^\beta f(t_0,x_0)\le 0$.
If $1/2\le \beta<1$, then $\displaystyle \LL^\beta f(t_0,x_0)=\frac{1}{c_\beta}\int_0^\infty(P_{2\nu} f (t_0,x_0)-2P_\nu f (t_0,x_0)) \frac{d\nu}{\nu^{1+2\beta}}$, as  $(P_{2\nu} f (t_0,x_0)-2P_\nu f (t_0,x_0))\le 0$, we obtain that $\LL^\beta f(t_0,x_0)\le 0$.

\edproof

\section{Computational results}\label{cuentas}

The following remark will be used systematically along this  manuscript.
\begin{remark}\label{'nota3'}
Let $\tau>0$.
\begin{itemize}
\item[(1)] If $\tau<1$, then $\sinh\tau\sim\tau$, $\cosh\tau\sim C$, $\coth\tau\sim\frac{1}{\tau}$ and $\tanh\tau\sim\tau$.
\item[(2)] If $\tau>1$, then $\sinh\tau\sim e^\tau$, $\cosh\tau\sim e^\tau$, $\coth\tau\sim C$ and $\tanh\tau\sim C$.
\item[(3)] Given $ n \in \mathbb{N}$, $\ell\in\N$ and  $\lambda\ge0$,  there exists a positive constant 
$C_{\ell,n,\lambda}$ such that \newline   $\frac{1}{(\coth \tau)^{\ell}(\sinh \tau)^{n}} = \frac{(\tanh \tau)^{\ell}}{(\sinh \tau)^{n}} \le C_{\ell,n,\lambda} \min (\tau^{-n+\ell}, e^{-c\tau}) \le  C_{\ell,n,\lambda}
\tau^{-n+\ell-\lambda}.$
\item[(4)]  Let  $z\ge 0$ and $\alpha \ge 0$ there exist a constant $C_\alpha >0$ such that  $ z^\alpha e^{- z} \le C_\alpha e^{-z/2}.$
\end{itemize}
As usual by $A\sim B$ we mean there exist constants $C_1,C_2$ such that $C_1 A \le B \le C_2 A.$ 
\end{remark}

\begin{lemma}\label{'lemma4'}For each $x\in\R^n$ and $\tau>0$, we have: 
\begin{itemize}
\item[(1)] $\displaystyle
e^{-\tau {\LL}}1(t,x) =\frac{e^{-\frac{\tanh(2\tau)}{2}|x|^2}}{(\cosh(2\tau))^{n/2}}.
$

\item[(2)] $\displaystyle
|\partial_\tau e^{-\tau {\LL}}1(t,x) |
\leq C(\min\{\tau,1\}+|x|^2).$

\item[(3)] Given  $0 < \alpha <1$, there exists $C_\alpha >0$ such that
\begin{equation}\label{'eqP'}
\Big| e^{-y \sqrt{ {\LL}}}1(t,x)-1\Big|   \le C_\alpha (1+|x|)^\alpha y^\alpha .
\end{equation}
\end{itemize}

\end{lemma}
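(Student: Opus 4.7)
For part (1), the plan is to apply the Mehler-type formula \eqref{MehlerK2} with $f\equiv 1$, which reduces the problem to evaluating the Gaussian integral
$$e^{-\tau\LL}1(t,x)=\int_{\R^n}\frac{e^{-\frac{|x-z|^2}{4}\coth\tau}\,e^{-\frac{|x+z|^2}{4}\tanh\tau}}{(2\pi\sinh 2\tau)^{n/2}}\,dz.$$
Expanding the two quadratic forms in $z$ and using the double-angle identities $\coth\tau+\tanh\tau=2\coth 2\tau$ and $\coth\tau-\tanh\tau=2/\sinh 2\tau$, the exponent becomes $-\tfrac{\coth 2\tau}{2}|z|^2+\tfrac{x\cdot z}{\sinh 2\tau}-\tfrac{\coth 2\tau}{2}|x|^2$. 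Completing the square in $z$ and computing the resulting Gaussian integral gives the claimed closed form.

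For part (2), I will differentiate the formula from (1) in $\tau$ directly. Writing $u(\tau,x):=e^{-\tau\LL}1(t,x)$, one gets
$$\partial_\tau u=-u(t,x)\left[n\tanh(2\tau)+\tfrac{|x|^2}{\cosh^2(2\tau)}\right].$$
Since $0\le u\le 1$, one uses $\tanh(2\tau)\le C\min(\tau,1)$ and $1/\cosh^2(2\tau)\le 1$ to conclude the desired pointwise estimate. This step is routine.

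For part (3), the plan is to exploit the subordination identity. Taking $\lambda=0$ in the formula $e^{-y\sqrt\lambda}=\tfrac{y}{2\sqrt\pi}\int_0^\infty e^{-y^2/4\tau}e^{-\tau\lambda}\tau^{-3/2}d\tau$ yields $\tfrac{y}{2\sqrt\pi}\int_0^\infty e^{-y^2/4\tau}\tau^{-3/2}d\tau=1$, so
$$e^{-y\sqrt\LL}1(t,x)-1=\frac{y}{2\sqrt\pi}\int_0^\infty e^{-y^2/4\tau}\bigl(e^{-\tau\LL}1(t,x)-1\bigr)\frac{d\tau}{\tau^{3/2}}.$$
From part (2), integrating $\partial_s e^{-s\LL}1$ from $0$ to $\tau$ gives the estimate $|e^{-\tau\LL}1(t,x)-1|\le C\tau(1+|x|^2)$ for all $\tau>0$. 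Combined with the trivial bound $|e^{-\tau\LL}1(t,x)-1|\le 2$, one obtains $|e^{-\tau\LL}1(t,x)-1|\le C\min\bigl(1,\tau(1+|x|^2)\bigr)$.

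The key observation that produces the right-hand side of \eqref{'eqP'} is to interpolate: for $0<\alpha<1$ and $u\ge 0$ one has $\min(1,u)\le u^{\alpha/2}$, so
$$|e^{-\tau\LL}1(t,x)-1|\le C\tau^{\alpha/2}(1+|x|^2)^{\alpha/2}\le C\tau^{\alpha/2}(1+|x|)^{\alpha}.$$
Inserting this into the subordination integral and making the change of variables $\tau=y^2 v$ gives
$$|e^{-y\sqrt\LL}1(t,x)-1|\le C(1+|x|)^\alpha y^\alpha\int_0^\infty e^{-1/(4v)}v^{\alpha/2-3/2}dv,$$
and the remaining integral converges precisely because $\alpha<1$ (the issue is at $v=\infty$). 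The crucial and slightly delicate step is choosing the exponent $\alpha/2$ in the interpolation rather than the more naive $\alpha$; the latter would produce $(1+|x|)^{2\alpha}$ in place of $(1+|x|)^\alpha$, which is the sharp growth needed downstream in Proposition \ref{convergencia2} where this estimate is paired with $|f(t,x)|\le C(1+|x|)^{-\alpha}$.
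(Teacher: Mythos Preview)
Your proof is correct. Parts (1) and (2) follow essentially the same route as the paper: compute the Gaussian integral via completing the square, then differentiate the closed form and bound the result using $\tanh(2\tau)\le C\min(\tau,1)$ and $0\le u\le 1$.

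For part (3) your argument is genuinely different from the paper's, and cleaner. The paper splits into the cases $|x|>1$ and $|x|\le 1$; in the first case it further cuts the $\tau$-integral at $\tau=1/|x|^2$, uses the Mean Value Theorem (part (2)) on the small-$\tau$ piece and the trivial bound on the large-$\tau$ piece, and then performs the substitution $v=y^2/(4\tau)$ in each piece separately, bounding each by $C\,\Gamma(1/2-\alpha/2)\,|x|^\alpha y^\alpha$. The case $|x|\le 1$ requires yet another split at $\tau=1$. Your approach replaces all of this case analysis by the single interpolation inequality $\min(1,u)\le u^{\alpha/2}$ applied to $u=\tau(1+|x|^2)$, which in one stroke produces exactly the right power of $(1+|x|)$ and a $\tau^{\alpha/2}$ factor whose subordination integral converges precisely for $\alpha<1$. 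The paper's decomposition is more hands-on and makes the origin of the threshold $1/|x|^2$ visible, but your interpolation is shorter, avoids the case distinction, and explains transparently why the exponent must be $\alpha/2$ rather than $\alpha$: the latter would both spoil convergence of the $v$-integral for $\alpha\ge 1/2$ and yield $(1+|x|)^{2\alpha}$, which would not pair correctly with the decay $[f]_{M^\alpha}$ in Proposition~\ref{convergencia2}.
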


\begin{proof}
By using formula (\ref{MehlerK2}) we have
\begin{align*}
&(2\pi\sinh(2\tau))^{n/2}\displaystyle
e^{-\tau {\LL}}1(t,x)\\ &=\int_{\mathbb{R}^n}{\rm exp}\Big(-\frac{1}{4}\coth\tau(|x|^2+|z|^2-2xz)\Big){\rm exp}\Big(-\frac{1}{4}\tanh\tau(|x|^2+|z|^2+2xz)\Big)dz\\
&={\rm exp}({-\frac{1}{4}|x|^2(\coth\tau+ \tanh \tau)})\\ & \;  \times \int_{\mathbb{R}^n}{\rm exp}\Big(-\frac{1}{4}\left[\left(\sqrt{(\coth\tau+ \tanh \tau)}z-\frac{\coth\tau-\tanh\tau}{\sqrt{(\coth\tau+ \tanh \tau)}}x\right)^2-\frac{(\coth\tau-\tanh\tau)^2|x|^2}{(\coth\tau+ \tanh \tau)}\right]\Big)dz\\
&={\rm exp} \Big(-\frac{1}{4}|x|^2(\coth\tau+ \tanh \tau)\Big){\rm exp}\Big(\frac{1}{4}\frac{(\coth\tau-\tanh\tau)^2}{(\coth\tau+ \tanh \tau)}|x|^2\Big)\int_{\mathbb{R}^n}e^{-\frac{u^2}{4}}\frac{du}{(\coth\tau+ \tanh \tau)^{n/2}}\\
&={\rm exp}\Big(-\frac{1}{2}|x|^2\tanh(2\tau)\Big)\frac{(\sinh(2\tau))^{n/2}}{(2\cosh(2\tau))^{n/2}}2^n\pi^{n/2}.
\end{align*}
Where we have done the change of variables $u=\sqrt{(\coth\tau+ \tanh \tau)}z-\frac{\coth\tau-\tanh\tau}{\sqrt{(\coth\tau+ \tanh \tau)}}x$. This concludes the proof of $(1).$

By using the estimates of Remark \ref{'nota3'}, it is easy to show that
$$
\displaystyle |\partial_\tau e^{-\tau {\LL}}1(t,x) |\leq C\Big(\tanh(2\tau)+(1+\tanh^2(2\tau))|x|^2\Big)e^{-\tau {\LL}}1(t,x)
\leq C(\min\{\tau,1\}+|x|^2).$$

For  $(3)$,
consider first the case $|x|>1$. By the Mean Value Theorem  and parts  $(1), (2)$ in this Lemma we get
\begin{multline*} \Big| e^{-y \sqrt{{\LL}}} 1(t,x)-1 \Big| =
  \frac{1}{2\sqrt{\pi}}\bigg| \Big(\int_0^{1/|x|^2}+ \int_{1/|x|^2}^\infty\Big)\,  \frac{ye^{-\frac{y^2}{4\tau}}}{\tau^{1/2}}(e^{-\tau{\LL}}1(t,x)- 1)\frac{d\tau}{\tau}\bigg| \\ \leq C\int_0^{1/|x|^2}\frac{ye^{-\frac{y^2}{4\tau}}}{\tau^{1/2}}|x|^2\tau\frac{d\tau}{\tau}  +C\int_{1/|x|^2}^\infty\frac{ye^{-\frac{y^2}{4\tau}}}{\tau^{1/2}} \frac{d\tau}{\tau} \\
\underbrace{=}_{\substack{\frac{y^2}{4\tau}=v}} C\Big(|x|^2 y^2\int_{\frac{|x|^2y^2}{c}}^\infty v^{1/2}e^{-v}\frac{dv}{v^2} +\int_0^{\frac{|x|^2y^2}{4}} v^{1/2}e^{-{v}}\frac{dv}{v}\Big)
\\ 
\leq \frac{C |x|^2 y^2}{(|x|^2y^2)^{1-\alpha/2}}\int_{\frac{|x|^2y^2}{c}}^\infty v^{1/2-\alpha/2}e^{-v}\frac{dv}{v}+  C|x|^\alpha y^{\alpha}\int_0^{\frac{|x|^2y^2}{4}}v^{1/2-\alpha/2}e^{-{v}}\frac{dv}{v} \\ \leq  C \Gamma(1/2-\alpha/2) |x|^\alpha y^{\alpha}.
\end{multline*}
Regarding the case  $|x|<1$.
Again, by the Mean Value Theorem we get
\begin{multline*}
\Big|\int_0^{1}\frac{ye^{-\frac{y^2}{4\tau}}}{\tau^{1/2}}(e^{-\tau{\LL}}1(t,x)- 1) \frac{d\tau}{\tau}\Big|\\
\le C\int_0^1\frac{ye^{-\frac{y^2}{4\tau}}}{\tau^{1/2}}(\tau+|x|^2)\tau\frac{d\tau}{\tau}
\leq C\int_0^{1/|x|^2}\frac{ye^{-\frac{y^2}{4\tau}}}{\tau^{1/2}}|x|^2\tau\frac{d\tau}{\tau}+\int_0^1\frac{y e^{-\frac{y^2}{4\tau}}}{\tau^{1/2}}\tau^2 \frac{d\tau}{\tau}\\
\underbrace{\leq}_{\substack{\frac{y^2}{4\tau}=v}}  C|x|^2 y^2\int_{\frac{|x|^2y^2}{4}}^\infty v^{1/2}e^{-v}\frac{dv}{v^2}+C\int_{\frac{y^2}{4}}^\infty v^{1/2} e^{-v} \left(\frac{y^2}{v}\right)^2\frac{dv}{v}\\
\leq C |x|^\alpha y^{\alpha}\int_{\frac{|x|^2y^2}{4}}^\infty v^{1/2-\alpha/2}e^{-{v}}\frac{dv}{v}+C y^\alpha \int_{\frac{y^2}{4}}^\infty v^{1/2-\alpha/2} e^{-v} \frac{dv}{v}\leq  C \Gamma(1/2-\alpha/2) y^\alpha.
\end{multline*}
On the other hand,
\begin{align*}
\Big|\int_1^{\infty}\frac{ye^{-\frac{y^2}{4\tau}}}{\tau^{1/2}}(e^{-\tau{\LL}}1(t,x)- 1) \frac{d\tau}{\tau}\Big|
&\leq C\int_1^\infty\frac{y e^{-\frac{y^2}{4\tau}}}{\tau^{1/2}}\frac{d\tau}{\tau}\leq Cy^{\alpha}\int_1^\infty \frac{y^{1-\alpha}e^{-\frac{y^2}{4\tau}}}{\tau^{1/2-\alpha/2}}\frac{d \tau}{\tau}\\ &\le C \Gamma(1/2-\alpha/2) y^\alpha.
\end{align*}
 \end{proof}

\begin{lemma}\label{lemma5B}
	Let $	\mathcal{P}_y(\tau, x,z)$ the Poisson kernel  associated with the parabolic harmonic oscillator,  $\LL$, and given by $\eqref{solution}$. Then,
	\begin{itemize}
		\item [(i)] There exists a constant $C$ such that for every $x,z$ in $\R^{n}$ and $\tau>0$,  \newline $\displaystyle \Big |  	\mathcal{P}_y(\tau,x,z)\Big| \le Cy\, e^{-\frac{y^2+|z|^2}{c\tau}}\, \tau^{-(\frac{n+3}{2})}$, and      $\displaystyle \Big |  \partial_y^k \mathcal{P}_y(\tau,x,z)\Big| \le C_k\, e^{-\frac{y^2+|z|^2}{c\tau}}\tau^{-(\frac{n+k}{2}+1)}$, for $k\ge 1$.
		\item[(ii)]Let $\gamma,\nu\ge 0, s \ge 0$. For each  $\ell,k,m\in\N\cup\{0\}$, there exists a constant $C_{\gamma,\nu,k,\ell,m,s}>0$ such that, for every $x\in\R^n$  and $\tau>0$,
		$$
		\int_{\R^{n+1}}|x|^\gamma|z|^\nu|\partial^k_y \partial^m_{z_i}\partial^\ell_{x_j}	\mathcal{P}_y(\tau,x,z)|dzdz\leq\begin{cases}C_{\gamma,\nu,\ell,k,m,s}\, y^{-({k+m-\ell-\nu+\gamma+s})},\;
		& \mbox{if $s\ge0, \zeta>0$,}\\  C_{\gamma,\nu,\ell,k,m,s}\,y^{-s},\:\; 
		& \mbox{if $ s >0, \zeta\le 0$,} \end{cases}
		$$
		for  $\zeta =k+m-\ell-\nu+\gamma$ and  $i=1,\dots,n, \:\, j=1,\dots,n.$  
		\item[(iii)] Let $f$ such that $|x|^\alpha f\in L^\infty(\R^{n+1})$, $0<\alpha\le 1$ and $s\ge 0$. There exists a constant $C_{s,\alpha} >0$ such that, for every $x\in\R^n$ and $\tau>0$, $$\int_{\R^{n+1}}|\partial_{x_i}\partial_y^2	\mathcal{P}_y(\tau,x,z)f(t-\tau,x-z)|dzd\tau\le C_{s,\alpha} y^{-({1-\alpha+s})}. 
		$$
		\item [(iv)] There exists a constant $C$ such that for every $x\in \R^n$  and $\tau>0$,
		\begin{equation}\label{P4}
		\int_{\R^{n+1}}|\partial_{\tau}	\mathcal{P}_y(\tau,x,z) |dzd\tau\le Cy^{-2} .
		\end{equation}
	\end{itemize}
\end{lemma}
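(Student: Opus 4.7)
The plan is to exploit the explicit product form of the kernel,
$$\mathcal{P}_y(\tau,x,z) = \frac{y e^{-y^2/(4\tau)}}{2\sqrt{\pi}\,\tau^{3/2}}\cdot \frac{e^{-|z|^2\coth\tau/4}\, e^{-|2x-z|^2\tanh\tau/4}}{(2\pi\sinh 2\tau)^{n/2}}\,\chi_{\{\tau>0\}},$$
which factors as a Poisson subordinator $\mu_y(\tau)=y e^{-y^2/(4\tau)}/(2\sqrt{\pi}\tau^{3/2})$ times the Mehler kernel $K(\tau,x,z)$. For part (i), I would first bound $K\le C e^{-|z|^2/(4\tau)}\tau^{-n/2}$ using $\coth\tau\ge 1/\tau$, $\sinh 2\tau\ge 2\tau$, and $e^{-|2x-z|^2\tanh\tau/4}\le 1$. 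To handle $\partial_y^k$, the rescaling $u=y/\sqrt\tau$ shows $\partial_y^k\mu_y = \tau^{-(k+2)/2}Q_k(u)e^{-u^2/4}$ for a universal polynomial $Q_k$, and the Gaussian absorbs $Q_k$ via $|Q_k(u)|e^{-u^2/8}\le C_k$; multiplying by $K$ produces the stated bound $\tau^{-(n+k+2)/2}e^{-(y^2+|z|^2)/(c\tau)}$.

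Part (ii) is the technical heart of the lemma and the main obstacle. My plan is to compute $\partial_{x_j}^\ell\partial_{z_i}^m K$ by the product rule: each $\partial_{z_i}$ produces a linear combination of $-\tfrac{z_i}{2}\coth\tau$ and $\tfrac{1}{2}(2x_i-z_i)\tanh\tau$, and each $\partial_{x_j}$ produces $-(2x_j-z_j)\tanh\tau$. Iterating, $\partial_{x_j}^\ell\partial_{z_i}^m K = K\cdot P$, where each monomial of $P$ has degree $\ell+m$ in the variables $\sqrt{\coth\tau}\,z$ and $\sqrt{\tanh\tau}\,(2x-z)$ with an extra coefficient of the form $(\coth\tau)^{a/2}(\tanh\tau)^{b/2}$. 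I would then distribute the $|x|^\gamma$-weight via $|x|^\gamma\le C_\gamma(|z|^\gamma+|2x-z|^\gamma)$ between the two Gaussians, absorb all polynomial prefactors by $|\xi|^p e^{-|\xi|^2/4}\le C_p e^{-|\xi|^2/8}$, and perform the rescaled substitutions $w=\sqrt{\coth\tau}\,z/2$, $v=\sqrt{\tanh\tau}\,(2x-z)/2$. This reduces the $z$-integral to a finite Gaussian, leaving trigonometric prefactors that by Remark~\ref{'nota3'}(3) can be bounded by $C\tau^{(\ell-m+\nu-\gamma)/2-\lambda}$ for any $\lambda\ge 0$. Combined with the factor $\tau^{-(k+2)/2}e^{-y^2/(c\tau)}$ from $\partial_y^k\mu_y$, the change of variables $u=y^2/\tau$ converts the $\tau$-integral to $Cy^{-(k+m-\ell-\nu+\gamma+2\lambda)}$. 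The parameter $s$ in the statement is $2\lambda$; the dichotomy between $\zeta>0$ and $\zeta\le 0$ records whether the natural $\tau$-integrand is already singular at $\tau=0$ (no slack needed, $\lambda=0$ allowed) or only at $\tau=\infty$ (some $\lambda>0$ required to convert the $e^{-c\tau}$ from Remark~\ref{'nota3'}(3) into an arbitrarily strong negative power of $y$).

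For part (iii), I observe that $\partial_{x_i}\partial_y^2\mathcal{P}_y(\tau,x,z)=-(2x_i-z_i)\tanh\tau\cdot\partial_y^2\mathcal{P}_y(\tau,x,z)$ and $|f(t-\tau,x-z)|\le A(1+|x-z|)^{-\alpha}$. Absorbing the linear factor via $|2x-z|(\tanh\tau)^{1/2}e^{-|2x-z|^2\tanh\tau/4}\le Ce^{-|2x-z|^2\tanh\tau/8}$ and applying the substitutions of (ii), the weight $(1+|x-z|)^{-\alpha}$ (with $0<\alpha\le 1$) integrates against the Gaussians with an additional factor $\tau^{\alpha/2}$ relative to the unweighted $(k=2,\ell=1)$ case of (ii), precisely converting the natural $y^{-(1+s)}$ into $y^{-(1-\alpha+s)}$. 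For part (iv), I use that $\mathcal{P}_y f$ satisfies $\partial_y^2\mathcal{P}_y f=\mathcal{L}\mathcal{P}_y f=(\partial_t+\mathcal{H})\mathcal{P}_y f$; since $\partial_t$ acting on the $\tau$-convolution~\eqref{solution} transfers to $-\partial_\tau$ on the kernel via integration by parts in $\tau$, the kernel itself satisfies $\partial_\tau\mathcal{P}_y=\partial_y^2\mathcal{P}_y-\mathcal{H}_x\mathcal{P}_y$, whence $\int|\partial_\tau\mathcal{P}_y|\,dzd\tau\le\int|\partial_y^2\mathcal{P}_y|+\sum_j\int|\partial_{x_j}^2\mathcal{P}_y|+\int|x|^2|\mathcal{P}_y|=O(y^{-2})$ by (ii) (taking $s=0$ in the first and third terms where $\zeta=2>0$, and $s=2$ in the second where $\zeta=-2\le 0$). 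Once the intricate bookkeeping in (ii) is in hand, parts (i), (iii), and (iv) reduce to particular cases or short corollaries.
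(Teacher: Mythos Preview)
Your treatment of (i) and (ii) is correct and is essentially what the paper does: bound the $y$-derivatives of the subordinator by $C_k\tau^{-(k/2+1)}e^{-y^2/(c\tau)}$, bound $|\partial_{z_i}^m|$ and $|\partial_{x_j}^\ell|$ of the spatial Gaussians by $(\coth\tau)^{m/2}$ and $(\tanh\tau)^{\ell/2}$ times shrunken Gaussians, distribute $|x|^\gamma\le C(|x-\tfrac{z}{2}|^\gamma+|\tfrac{z}{2}|^\gamma)$, and then use Remark~\ref{'nota3'}(3) with a free parameter $\lambda$ to reduce to a single $\tau$-integral.

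There is a genuine gap in (iii). After you absorb the full linear factor via $|2x-z|(\tanh\tau)^{1/2}e^{-|2x-z|^2\tanh\tau/4}\le Ce^{-|2x-z|^2\tanh\tau/8}$, the remaining weight $(1+|x-z|)^{-\alpha}$ does \emph{not} produce an extra $\tau^{\alpha/2}$: for small $\tau$ the $z$-Gaussian $e^{-|z|^2\coth\tau/c}$ forces $z\approx 0$, so $|x-z|\approx|x|$ and the weight is just the $\tau$-independent constant $(1+|x|)^{-\alpha}$. The $\alpha$-gain has to come from the $\partial_{x_i}$ factor itself. The paper writes $|2x-z|\tanh\tau=(\tanh\tau)^{(1+\alpha)/2}\cdot\big(|2x-z|\sqrt{\tanh\tau}\big)^{1-\alpha}\cdot|2x-z|^\alpha$, absorbs only the middle factor into the $(2x-z)$-Gaussian, and then uses $2x-z=2(x-z)+z$ to split $|2x-z|^\alpha\le C(|x-z|^\alpha+|z|^\alpha)$; the first piece is killed by $|x-z|^\alpha|f(t-\tau,x-z)|\le[f]_{M^\alpha}$, the second by absorbing $|z|^\alpha$ into the $z$-Gaussian. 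This is the missing idea.

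There is also a problem in (iv). The identity $\partial_\tau\mathcal{P}_y=\partial_y^2\mathcal{P}_y-\mathcal{H}_x\mathcal{P}_y$ is \emph{false} for the kernel $\mathcal{P}_y(\tau,x,z)$ of \eqref{solution}: in the representation $\mathcal{P}_yf(t,x)=\int\mathcal{P}_y(\tau,x,z)f(t-\tau,x-z)\,dz\,d\tau$ the argument $x-z$ of $f$ also depends on $x$, so $\mathcal{H}_x(\mathcal{P}_yf)$ does not reduce to $\int(\mathcal{H}_x\mathcal{P}_y)f$. A direct check confirms that $\partial_\tau K+\mathcal{H}_xK\neq 0$ for $K(\tau,x,z)=e^{-|z|^2\coth\tau/4}e^{-|2x-z|^2\tanh\tau/4}/(2\pi\sinh 2\tau)^{n/2}$ (the $|z|^2/\sinh^2\tau$ term in $\partial_\tau K$ has no counterpart in $\mathcal{H}_xK$). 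The identity \emph{does} hold for the kernel $\tilde{\mathcal{P}}_y(\tau,x,w)$ of \eqref{Poisson(n+1)2}, but $\partial_{x_j}^2\tilde{\mathcal{P}}_y$ is a different object from the $\partial_{x_j}^2\mathcal{P}_y$ estimated in (ii), so you cannot simply quote (ii). The paper avoids this by computing $\partial_\tau\mathcal{P}_y$ explicitly, obtaining five terms (involving $1/\tau$, $\coth 2\tau$, $y^2/\tau^2$, $|z|^2/\sinh^2\tau$, $|2x-z|^2/\cosh^2\tau$), and bounding each one directly.
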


\begin{proof}
		Along this proof  will use  Remark \ref{'nota3'} and the estimates: \newline  $\displaystyle\partial_y^k \Big( \frac{ye^{-\frac{y^2}{4\tau}}  }{\tau^{3/2}} \Big) \le C_k \,e^{-\frac{y^2}{8\tau}}  \tau^{-(k/2+1)}$,  
$\hspace{0.2 cm}\displaystyle \Big|\partial_{x_i}^\ell \Big(  e^{-\frac{|2x-z|^2\tanh \tau}{4}} \Big)\Big| \le 	
C_\ell e^{-\frac{|2x-z|^2\tanh \tau}{8}} (\tanh \tau)^{\ell/2}$  and $\displaystyle |\partial_{z_i}^m e^{-\frac{|z|^2\coth\tau}{4}}|
\le C_m e^{-\frac{|z|^2\coth\tau}{8}} (\coth\tau)^{m/2}.$

Estimate (i) is consequence of Remark \ref{'nota3'}. In order to prove (ii), as
$$|\partial^k_y \partial^m_{z_i}\partial^\ell_{x_j}	\mathcal{P}_y(\tau,x,z)|\le 
\frac{C}{(\sinh \tau)^{n/2}} e^{-\frac{y^2}{C\tau}} \tau^{-(k/2+1)}  e^{-\frac{|z|^2 \coth \tau}{C}}(\coth\tau)^{m/2}e^{-{|x-\frac{z}{2}|^2}\tanh\tau}(\tanh \tau)^{\ell/2}  ,$$
again by  Remark \ref{'nota3'},  for every $\lambda \ge 0$ we get 
	\begin{multline*}
\int_{\R^{n+1}}|x|^\gamma|z|^\nu|\partial^k_y \partial^m_{z_i}\partial^\ell_{x_j}	\mathcal{P}_y(\tau,x,z)|dzdz   \\ \le  C\int_{\mathbb{R}^n}\int_0^\infty \frac{(|x-\frac{z}{2}|^\gamma +|\frac{z}{2}|^\gamma)|z|^\nu e^{-\frac{y^2}{C\tau}} e^{-\frac{|z|^2 \coth \tau}{C}e^{-{|x-\frac{z}{2}|^2}\tanh\tau}}} { \tau^{(k/2+1)} (\sinh \tau)^{n/2}}  (\coth\tau)^{m/2}(\tanh \tau)^{\ell/2} \frac{d\tau}{\tau}dz\\
	\le C\int_{\mathbb{R}^n}\int_0^\infty \frac{ e^{-\frac{y^2}{C\tau}} e^{-\frac{|z|^2 \coth \tau}{C}} }{ \tau^{\frac{k+n+m-\ell+\gamma-\nu+\lambda}{2}}}\frac{d\tau}{\tau} dz \le C\int_0^\infty \frac{ e^{-\frac{y^2}{C\tau}}  }{ \tau^{\frac{k+m-\ell+\gamma-\nu+\lambda}{2}}}\frac{d\tau}{\tau}.
	\end{multline*}
	The constant $C$ depends on  $\gamma,\nu,\ell,k,m$ and $\lambda.$
	The result follows by choosing $\lambda = s $ in the case  $k+m-\ell+\gamma-\nu>0$, for $k+m-\ell+\gamma-\nu \le 0$ we choose    $\lambda =-(k+m-\ell+\gamma-\nu)+s$  in the case $k+m-\ell+\gamma-\nu \le 0$.
	
	For (iii),  as $|x|^\alpha f\in L^\infty(\R^{n+1})$,  we have
	\begin{align*}
	&\int_{\R^{n+1}}|\partial_{x_i}\partial_y^2	\mathcal{P}_y(\tau,x,z)f(t-\tau,x-z)|dzd\tau\\&\le C \int_{\R^n}\int_0^\infty \frac{e^{-\frac{y^2}{c\tau}}e^{-\frac{|z|^2\coth\tau}{4}}e^{-\frac{|2x-z|^2\tanh\tau}{c}}\tanh\tau|2x-z|^{1-\alpha}|2x-z|^\alpha|f(t-\tau,x-z)|}{\tau (\sinh(2\tau))^{n/2}}\frac{d\tau}{\tau}dz\\
	&\le C \int_{\R^n}\int_0^\infty \frac{e^{-\frac{y^2}{c\tau}}e^{-\frac{|z|^2\coth\tau}{4}}(\tanh\tau)^{\frac{1+\alpha}{2}}
		|x-z|^\alpha|f(t-\tau,x-z)|}{\tau (\sinh(2\tau))^{n/2}}\frac{d\tau}{\tau}dz\\&+ C\int_{\R^n}\int_0^\infty \frac{e^{-\frac{y^2}{c\tau}}e^{-\frac{|z|^2\coth\tau}{4}}(\tanh\tau)^{\frac{1+\alpha}{2}}|z|^\alpha|f(t-\tau,x-z)|}{\tau (\sinh(2\tau))^{n/2}}\frac{d\tau}{\tau}dz\\
	&\le C[f]_{M^\alpha} \int_{\R^n}\int_0^\infty \frac{e^{-\frac{y^2}{c\tau}}e^{-\frac{|z|^2\coth\tau}{4}}
		(\tanh\tau)^{\frac{1+\alpha}{2}}}{\tau (\sinh(2\tau))^{n/2}}\frac{d\tau}{\tau}dz\\ & +C \|f\|_\infty\int_{\R^n}\int_0^\infty \frac{e^{-\frac{y^2}{c\tau}}e^{-\frac{|z|^2\coth\tau}{4}}\tanh\tau}{\tau (\sinh(2\tau))^{n/2}(\coth\tau)^{\alpha/2}}\frac{d\tau}{\tau}dz\\
	&\le C \int_0^\infty \frac{e^{-\frac{y^2}{c\tau}}}{\tau^{\frac{1-\alpha+\lambda}{2}}}\frac{d\tau}{\tau}.
	\end{align*}
The result follows by taking $\lambda=s .$
	
	We shall prove $(iv)$ in the case of the first derivative, we leave the details for the second derivative to the reader. By using the ideas in the proof of (iii) we have  
		\begin{multline*}
			\int_{\R^{n+1}}|\partial_{\tau} \mathcal{P}_y(\tau,x,z) |dzd\tau \\ \le 
	C\int_0^\infty\int_{\R^n} \frac{y e^{-\frac{y^2}{c\tau}}e^{-\frac{|z|^2\coth\tau}{c}}e^{-\frac{|2x-z|^2\tanh\tau}{c}}}{\tau^{3/2} (\sinh 2\tau)^{n/2} } \Big(  \frac1{\tau}+ \frac{\cosh(2\tau)}{\sinh (2\tau)} + \frac{|y|^2}{\tau^2} + \frac{|z|^2}{(\sinh \tau)^2} + \frac{|2x-z|^2}{(\cosh \tau)^2} \Big) d\tau \\
	\le C\int_0^\infty\int_{\R^n} \frac{y e^{-\frac{y^2}{c\tau}}e^{-\frac{|z|^2\coth\tau}{c}}e^{-\frac{|2x-z|^2\tanh\tau}{c}}}{\tau^{3/2} (\sinh 2\tau)^{n/2} } \Big(  \frac1{\tau}+ \frac{\cosh(2\tau)}{\sinh (2\tau)} + \frac{|y|^2}{\tau^2} + \frac{|z|^2}{(\sinh \tau)^2} + \frac{|2x-z|^2}{(\cosh \tau)^2} \Big) d\tau \\
	\le C\int_0^\infty\int_{\R^n} \frac{ e^{-\frac{y^2}{c\tau}}e^{-\frac{|z|^2}{c \tau}}}{ \tau^{1+n/2} }   \frac1{\tau}d\tau \le  C\int_0^\infty\frac{ e^{-\frac{y^2}{c\tau}}}{ \tau }   \frac{d\tau}{\tau} \le \frac{C}{y^2}.
\end{multline*}

\end{proof}

\end{document}